\begin{document}
\newtheoremstyle{all}
  {11pt}
  {11pt}
  {\slshape}
  {}
  {\bfseries}
  {}
  {.5em}
  {}

\theoremstyle{all}
\newtheorem{thm}{Theorem}[section]
\newtheorem{prop}[thm]{Proposition}
\newtheorem{cor}[thm]{Corollary}
\newtheorem{lemma}[thm]{Lemma}
\newtheorem{defn}[thm]{Definition}
\newtheorem{ques}[thm]{Question}
\newtheorem{conj}[thm]{Conjecture}
\newtheorem{rem}[thm]{Remark}
 
\newcommand{\nc}{\newcommand}
\newcommand{\renc}{\renewcommand}
  \nc{\kac}{\kappa^C}
\nc{\alg}{T}
\nc{\Lco}{L_{\la}}
\nc{\qD}{q^{\nicefrac 1D}}
\nc{\ocL}{M_{\la}}
\nc{\excise}[1]{}
\nc{\Dbe}{D^{\uparrow}}
\nc{\tr}{\operatorname{tr}}
\newcommand{\Mirkovic}{Mirkovi\'c\xspace}
\nc{\tla}{\mathsf{t}_\la}
\nc{\llrr}{\langle\la,\rho\rangle}
\nc{\lllr}{\langle\la,\la\rangle}
\nc{\K}{\mathbbm{k}}
\nc{\Stosic}{Sto{\v{s}}i{\'c}\xspace}
\newcommand{\arxiv}[1]{\href{http://arxiv.org/abs/#1}{\tt arXiv:\nolinkurl{#1}}}
\nc{\cd}{\mathcal{D}}
\nc{\vd}{\mathbb{D}}
\nc{\R}{\mathbb{R}}
\renewcommand{\theequation}{\fnsymbol{equation}}

\renc{\wr}{\operatorname{wr}}
  \nc{\Lam}[3]{\La^{#1}_{#2,#3}}
  \nc{\Lab}[2]{\La^{#1}_{#2}}
  \nc{\Lamvwy}{\Lam\Bv\Bw\By}
  \nc{\Labwv}{\Lab\Bw\Bv}
  \nc{\nak}[3]{\mathcal{N}(#1,#2,#3)}
  \nc{\hw}{highest weight\xspace}
  \nc{\al}{\alpha}
  \nc{\be}{\beta}
  \nc{\bM}{\mathbf{m}}
  \nc{\bkh}{\backslash}
  \nc{\Bi}{\mathbf{i}}
  \nc{\Bj}{\mathbf{j}}
\nc{\RAA}{R^\A_A}
  \nc{\Bv}{\mathbf{v}}
  \nc{\Bw}{\mathbf{w}}
\nc{\Id}{\operatorname{Id}}
  \nc{\By}{\mathbf{y}}
\nc{\eE}{\EuScript{E}}
  \nc{\Bz}{\mathbf{z}}
  \nc{\coker}{\mathrm{coker}\,}
  \nc{\C}{\mathbb{C}}
  \nc{\ch}{\mathrm{ch}}
  \nc{\de}{\delta}
  \nc{\ep}{\epsilon}
  \nc{\Rep}[2]{\mathsf{Rep}_{#1}^{#2}}
  \nc{\Ev}[2]{E_{#1}^{#2}}
  \nc{\fr}[1]{\mathfrak{#1}}
  \nc{\fp}{\fr p}
  \nc{\fq}{\fr q}
  \nc{\fl}{\fr l}
  \nc{\fgl}{\fr{gl}}
\nc{\rad}{\operatorname{rad}}
\nc{\ind}{\operatorname{ind}}
  \nc{\GL}{\mathrm{GL}}
  \nc{\Hom}{\mathrm{Hom}}
  \nc{\im}{\mathrm{im}\,}
  \nc{\La}{\Lambda}
  \nc{\la}{\lambda}
  \nc{\mult}{b^{\mu}_{\la_0}\!}
  \nc{\mc}[1]{\mathcal{#1}}
  \nc{\om}{\omega}
\nc{\gl}{\mathfrak{gl}}
  \nc{\cF}{\mathcal{F}}
 \nc{\cC}{\mathcal{C}}
  \nc{\Vect}{\mathsf{Vect}}
 \nc{\modu}{\mathsf{mod}}
  \nc{\qvw}[1]{\La(#1 \Bv,\Bw)}
  \nc{\van}[1]{\nu_{#1}}
  \nc{\Rperp}{R^\vee(X_0)^{\perp}}
  \nc{\si}{\sigma}
  \nc{\croot}[1]{\al^\vee_{#1}}
\nc{\di}{\mathbf{d}}
  \nc{\SL}[1]{\mathrm{SL}_{#1}}
  \nc{\Th}{\theta}
  \nc{\vp}{\varphi}
  \nc{\wt}{\mathrm{wt}}
  \nc{\Z}{\mathbb{Z}}
  \nc{\Znn}{\Z_{\geq 0}}
  \nc{\ver}{\EuScript{V}}
  \nc{\Res}[2]{\operatorname{Res}^{#1}_{#2}}
  \nc{\edge}{\EuScript{E}}
  \nc{\Spec}{\mathrm{Spec}}
  \nc{\tie}{\EuScript{T}}
  \nc{\ml}[1]{\mathbb{D}^{#1}}
  \nc{\fQ}{\mathfrak{Q}}
        \nc{\fg}{\mathfrak{g}}
  \nc{\Uq}{U_q(\fg)}
        \nc{\bom}{\boldsymbol{\omega}}
\nc{\bla}{{\underline{\boldsymbol{\la}}}}
\nc{\bmu}{{\underline{\boldsymbol{\mu}}}}
\nc{\bal}{{\boldsymbol{\al}}}
\nc{\bet}{{\boldsymbol{\eta}}}
\nc{\rola}{X}
\nc{\wela}{Y}
\nc{\fM}{\mathfrak{M}}
\nc{\fX}{\mathfrak{X}}
\nc{\fH}{\mathfrak{H}}
\nc{\fE}{\mathfrak{E}}
\nc{\fF}{\mathfrak{F}}
\nc{\fI}{\mathfrak{I}}
\nc{\qui}[2]{\fM_{#1}^{#2}}
\renc{\cL}{\mathcal{L}}
\nc{\ca}[2]{\fQ_{#1}^{#2}}
 \nc{\cat}{\mathcal{V}}
\nc{\cata}{\mathfrak{V}}
\nc{\pil}{{\boldsymbol{\pi}}^L}
\nc{\pir}{{\boldsymbol{\pi}}^R}
\nc{\cO}{\mathcal{O}}
\nc{\Ko}{\text{\Denarius}}
\nc{\Ei}{\fE_i}
\nc{\Fi}{\fF_i}
\nc{\fil}{\mathcal{H}}
\nc{\brr}[2]{\beta^R_{#1,#2}}
\nc{\brl}[2]{\beta^L_{#1,#2}}
\nc{\so}[2]{\EuScript{Q}^{#1}_{#2}}
\nc{\EW}{\mathbf{W}}
\nc{\rma}[2]{\mathbf{R}_{#1,#2}}
\nc{\Dif}{\EuScript{D}}
\nc{\MDif}{\EuScript{E}}
\renc{\mod}{\mathsf{mod}}
\nc{\modg}{\mathsf{mod}^g}
\nc{\fmod}{\mathsf{mod}^{fd}}
\nc{\id}{\operatorname{id}}
\nc{\DR}{\mathbf{DR}}
\nc{\End}{\operatorname{End}}
\nc{\Fun}{\operatorname{Fun}}
\nc{\Ext}{\operatorname{Ext}}
\nc{\tw}{\tau}
\nc{\A}{\EuScript{A}}
\nc{\Loc}{\mathsf{Loc}}
\nc{\eF}{\EuScript{F}}
\nc{\LAA}{\Loc^{\A}_{A}}
\nc{\perv}{\mathsf{Perv}}
\nc{\gfq}[2]{B_{#1}^{#2}}
\nc{\qgf}[1]{A_{#1}}
\nc{\qgr}{\qgf\rho}
\nc{\tqgf}{\tilde A}
\nc{\Tr}{\operatorname{Tr}}
\nc{\Tor}{\operatorname{Tor}}
\nc{\cQ}{\mathcal{Q}}
\nc{\st}[1]{\Delta(#1)}
\nc{\cst}[1]{\nabla(#1)}
\nc{\ei}{\mathbf{e}_i}
\nc{\Be}{\mathbf{e}}
\nc{\Hck}{\mathfrak{H}}
\renc{\P}{\mathbb{P}}
\nc{\cI}{\mathcal{I}}
\nc{\coe}{\mathfrak{K}}
\nc{\pr}{\operatorname{pr}}
\nc{\bra}{\mathfrak{B}}
\nc{\rcl}{\rho^\vee(\la)}
\nc{\tU}{\mathcal{U}}
\nc{\RHom}{\mathrm{RHom}}
\nc{\tcO}{\tilde{\cO}}

\setcounter{tocdepth}{1}

\excise{
\newenvironment{block}
\newenvironment{frame}
\newenvironment{tikzpicture}
\newenvironment{equation*}
}

\baselineskip=1.1\baselineskip

 \usetikzlibrary{decorations.pathreplacing,backgrounds,decorations.markings}
\tikzset{wei/.style={draw=red,double=red!40!white,double distance=1.5pt,thin}}
\tikzset{bdot/.style={fill,circle,color=blue,inner sep=3pt,outer sep=0}}
\begin{center}
\noindent {\large  \bf Knot invariants and higher representation theory II:\\ the categorification of quantum knot invariants}
\medskip

\noindent {\sc Ben Webster}\footnote{Supported by an NSF Postdoctoral Research Fellowship and  by the NSA under Grant H98230-10-1-0199.}\\  
Department of Mathematics\\ Northeastern University\\
Boston, MA\\
Email: {\tt b.webster@neu.edu}
\end{center}
\bigskip
{\small
\begin{quote}
\noindent {\em Abstract.}
We construct knot invariants categorifying the quantum knot variants for all representations of quantum groups.  We show that these invariants coincide with previous invariants defined by Khovanov for $\mathfrak{sl}_2$ and $\mathfrak{sl}_3$ and by Mazorchuk-Stroppel and Sussan for $\mathfrak{sl}_n$.   

Our technique uses categorifications of the tensor product
representations of Kac-Moody algebras and quantum groups, constructed
in part I of this paper. These categories are based on the pictorial
approach of Khovanov and Lauda. In this paper, we show that these
categories are related by functors corresponding to the braiding and
(co)evaluation maps between representations of quantum groups.
Exactly as these maps can be used to define quantum invariants
attached to any tangle, their categorifications can be used to define
knot homologies.
\end{quote}
}

\vspace{1cm}

\renc{\thethm}{\Alph{thm}}

Much of the theory of quantum topology rests on the structure of
monoidal categories and their use in a variety of topological
constructions. In this paper, we define a categorification of one
of these: the R-matrix construction of quantum knot invariants,
following Reshetikhin and Turaev \cite{Tur,RT}. 

They
construct polynomial invariants of framed knots by assigning natural maps
between tensor products of representations of a quantized universal
enveloping algebra $U_q(\fg)$ to each ribbon tangle labeled with
representations.  These maps are natural with respect to tangle
composition; thus they can be reconstructed from a small number of
constituents, most notably the maps associated to a single ribbon
twist, single crossing, single cup and single cap.
The map associated to a link whose components are
labeled with a representation of $\fg$ (or the corresponding highest
weight) is thus simply a Laurent polynomial.

Particular cases of these include:
\begin{itemize}
\item the {\bf Jones polynomial} when $\fg=\mathfrak{sl}_2$ and all strands
  are labeled with the defining representation.
\item the {\bf colored Jones polynomials} for other representations of
  $\fg=\mathfrak{sl}_2$.
\item specializations of the {\bf HOMFLYPT polynomial} for the defining
  representation of $\fg=\mathfrak{sl}_n$.
\item the {\bf Kauffman polynomial} (not to be confused with the Kauffman
  bracket, a variant of the Jones polynomial) for the defining
  representation of $\mathfrak{so}_{n}$.
\end{itemize}

These special cases have been categorified to knot homologies from a
number of perspectives by Khovanov and Khovanov-Rozansky
\cite{Kho00,Kho02,Kh-sl3,Kho05,KR04,KRSO,KR05}, Stroppel and
Mazorchuk-Stroppel \cite{StDuke,MS09}, Sussan \cite{Sussan2007},
Seidel-Smith \cite{SS}, Manolescu \cite{Manolescu}, Cautis-Kamnitzer
\cite{CK,CKII}, Mackaay, \Stosic and Vaz \cite{MSVsln,MSV} and the
author and Williamson \cite{WWcol}. However all of these have only
considered minuscule representations (of which there are only finitely
many in each type).

There has been some progress on other representations of
$\mathfrak{sl}_2$.  In a paper still in preparation,  Stroppel and Sussan also consider
the case of the colored Jones polynomial \cite{StSu} (building on
previous work with Frenkel \cite{FSS}); it seems likely their
construction is equivalent to ours via the constructions of Section
\ref{sec:comparison-functors}. Similarly, Cooper, Hogancamp and
Krushkal have given a categorification of the 2-colored Jones
polynomial in Bar-Natan's cobordism formalism for Khovanov homology \cite{CoHoKr}.

On the other hand, the work of physicists suggests that
categorifications for all representations exist; one schema for
defining them is given by Witten \cite{Witknots}.  The relationship
between these invariants arising from gauge theories and those
presented in this paper is completely unknown (at least to the author)
and presents a very interesting question for consideration in the future.\medskip

However, the vast majority of representations previously
had no homology theory attached to them.  In this paper, we will
construct such a theory for any labels; that is,
\begin{thm}\label{knot-hom}
  For each simple complex Lie algebra $\fg$, there is a
  homology theory $\EuScript{K}(L,\{\la_i\})$ for links $L$ whose
  components are labeled by finite dimensional representations of
  $\fg$ (here indicated by their highest weights $\la_i$), which
  associates to such a link a bigraded vector space whose graded Euler
  characteristic is the quantum invariant of this labeled link.

  This theory coincides up to grading shift with Khovanov's homologies for
  $\fg=\mathfrak{sl}_2,\mathfrak{sl}_3$ when the link is labeled with the defining representation of these algebras, and the
  Mazorchuk-Stroppel-Sussan homology for the defining representation
  of $\mathfrak{sl}_n$.
\end{thm}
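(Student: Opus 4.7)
The plan is to build the homology theory by assigning to every elementary ribbon tangle a functor between the tensor product categorifications $\cat^\bla$ from Part I, and then verifying that these functors satisfy enough relations (Reidemeister and framed Markov moves) to give a well-defined invariant of framed labeled tangles. Concretely, for a sequence $\bla=(\la_1,\dots,\la_n)$ of dominant weights, Part I supplies a graded category $\cat^\bla$ whose Grothendieck group is canonically identified with the tensor product $V_{\la_1}\otimes\cdots\otimes V_{\la_n}$ of simple $\Uq$-modules, and the endofunctors $\Ei,\Fi$ categorify the action of $E_i,F_i$. I would then define, for each generating tangle (a cup labeled by $\la$, a cap labeled by $\la$, a positive/negative crossing between strands labeled $\la,\mu$, and a ribbon twist on a single strand labeled $\la$), an exact functor between appropriate categories $\cat^\bla$, so that passing to $K_0$ recovers the usual R-matrix maps of Reshetikhin--Turaev.

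The braiding functor is the most delicate ingredient: on $K_0$ the Lusztig braid operator on $V_{\la}\otimes V_{\mu}$ is built from infinite sums of divided powers of $E_i,F_i$, so I would lift it to a complex of functors formed out of the divided power 1-morphisms $\Ei^{(k)},\Fi^{(k)}$ in the categorified quantum group, following the Chuang--Rouquier philosophy but adapted to the tensor product setting. The cup and cap functors should be the natural inclusion/projection between $\cat^\bla$ and $\cat^{(\bla,\la,\la^*)}$ coming from the categorified coevaluation, together with (for the cap) a derived adjoint; the ribbon twist is a shift by a framing-dependent graded degree determined by $\langle\la,\la+2\rho\rangle$. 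After defining all generators, the main verification consists of checking:
\begin{enumerate}
\item invariance under the framed Reidemeister moves (R2, R3, and the framed version of R1);
\item naturality of the braiding with respect to cups and caps (the pivotal/ribbon axioms);
\item independence of the decomposition of the tangle diagram into generators.
\end{enumerate}
Each of these reduces to a functorial identity inside a fixed $\cat^\bla$ that can be checked by exhibiting an explicit natural isomorphism between the two sides, using the structural results of Part I (standard filtrations, biadjunctions of $\Ei,\Fi$, and the categorical $\fg$-action relations).

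With invariance established, the homology is defined as follows: a labeled framed link $L$ produces a functor $\EuScript{K}(L)\colon\cat^{\emptyset}\to\cat^{\emptyset}$, and since $\cat^{\emptyset}\simeq\Vect$, evaluating on the ground field yields a bigraded complex; the homology of this complex is $\EuScript{K}(L,\{\la_i\})$. That its graded Euler characteristic recovers the Reshetikhin--Turaev invariant is then automatic, because at the level of $K_0$ we have reproduced the generators of the R-matrix calculus and checked the defining relations. For the comparison with Khovanov's $\mathfrak{sl}_2$ and $\mathfrak{sl}_3$ invariants and with Mazorchuk--Stroppel--Sussan's $\mathfrak{sl}_n$ invariants, I would construct equivalences between the minuscule versions of $\cat^\bla$ and the relevant categories of modules over Khovanov's arc algebras (respectively, parabolic category $\cO$); these equivalences are the subject of the comparison section referenced in the introduction, and intertwine the tangle functors on the two sides, yielding the asserted isomorphism of homology theories up to grading shift.

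The hardest step will be (1)--(3) for the braiding functor, particularly the Reidemeister II invariance (which requires the Rickard-style complex to be an equivalence) and Reidemeister III (which in the non-minuscule case involves infinitely many terms and cannot be done by brute enumeration); my expectation is that these can be attacked uniformly by exploiting the fact that both sides of each move are categorifications of the same map, reducing the question to showing that certain complexes with prescribed Euler characteristic are in fact homotopy equivalent, which in turn follows from a rigidity or generation argument inside the categorified tensor product.
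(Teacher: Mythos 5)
Your overall architecture (elementary tangle functors on the categories $\cat^\bla$, Reidemeister-type checks, then a comparison via category $\cO$-like equivalences) matches the paper's, but two of your key steps diverge from what is actually done, and one of them contains a genuine gap.

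First, the braiding. You propose to lift the braid operator on $V_\la\otimes V_\mu$ as a Rickard-style complex built from the divided-power $1$-morphisms $\fE_i^{(k)},\fF_i^{(k)}$. That construction categorifies the quantum Weyl group action on weight spaces, which is not the same operator as the R-matrix exchanging two tensor factors; relating the two requires a nontrivial identity of the form ``$R$ equals a coproduct of half-twists,'' and you give no mechanism for producing the cabling/coevaluation compatibilities from such a complex. The paper instead defines $\mathbb{B}_{\si_k}$ as derived tensor product with an explicit diagrammatic bimodule $\bra_{\si_k}$ (pictures with one crossing of red strands), proves it has a basis of the expected size, and then establishes invertibility and the braid relations by an induction interleaving three statements (projectives go to modules, projectives go to standardly filtered objects, standards go to modules), with the final dimension count done against the Euler form at $q=1$. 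This makes Reidemeister II and III essentially formal, whereas in your setup they are exactly the steps you flag as hardest and for which you offer only a hope that ``rigidity'' will save you; that is the main gap.

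Second, the ribbon normalization. You take the twist to be a pure internal grading shift by $\langle\la,\la+2\rho\rangle$, i.e.\ the standard ribbon element. The paper computes $\mathbb{B}_{\si_1}\Lco\cong L_{\la^*}[-2\rcl](-2\llrr-\lllr)$, so the double twist on the categorified invariant vector carries an unavoidable \emph{homological} shift $[-4\rcl]$; consistency of the cup/cap isotopies then forces the ribbon functor to include $[2\rho^\vee(\la_i)]$, which decategorifies to the Snyder--Tingley ribbon element rather than the standard one. With your normalization the S-move and pitchfork compatibilities fail by a sign, and the Euler characteristic is the Reshetikhin--Turaev invariant only after the correction of Proposition \ref{schur-indicate}. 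Finally, you do not address why the resulting bigraded vector space is well defined: the cap functor is a derived tensor with a simple module and is far from exact, the complexes involved are genuinely unbounded for non-minuscule labels (already for the $2$-colored unknot), and one must prove finite-dimensionality in each bidegree before the graded Euler characteristic even makes sense.
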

Conjecturally, the Mazorchuk-Stroppel-Sussan homology is canonically
isomorphic to Khovanov-Rozansky homology (see \cite[\S 7]{MS09}); they both categorify the same knot invariants.

At the moment, we have not proven that this theory is functorial, but
we do have a proposal for the map associated to a cobordism when the weights $\la_i$ are all minuscule.  As usual
in knot homology, this proposed functoriality map is constructed by picking a
Morse function on the cobordism, and associating simple maps to the
addition of handles.  At the moment, we have no proof that this definition is
independent of Morse function and we anticipate that proving this will be quite difficult.

\medskip

Our method for this construction is to categorify every structure on
the ribbon category of $U_q(\fg)$-representations used in the original
definition: its braiding, ribbon structure, and rigid structure (the
functor of taking duals).  This approach was pioneered by Stroppel for
the defining rep of $\mathfrak{sl}_2$ \cite{Str06b,Str06a} and was
extended to $\mathfrak{sl}_n$ by Sussan \cite{Sussan2007} and
Mazorchuk-Stroppel \cite{MS09}.  But for our approach, we must use
much less familiar categories than the variations of category $\cO$
used by those authors. These categories are introduced by the author in \cite{WebCTP}, and our primary task in this paper to
construct and check relations between functors analogous to the
translation and twisting functors that appear in the $\mathfrak{sl}_n$
case (which our construction will specialize to).  

The principal result of  \cite{WebCTP} is that for each ordered $\ell$-tuple $\bla=(\la_1,\dots,\la_\ell)$ of
  dominant weights of $\fg$, there is a graded finite dimensional algebra
  $\alg^\bla$ whose representations are a module category for the categorification of $U_q(\fg)$ in the sense of Rouquier and Khovanov-Lauda and whose graded Grothendieck
  group $K_0(\alg^\bla)$ is an integral form of the $U_q(\fg)$-representation $V_{\la_1}\otimes \cdots\otimes V_{\la_\ell}$. 

In this paper, we strengthen the case for viewing $\cata^\bla$, the category of finite dimen\-sional $\alg^\bla$-modules and its derived category $\cat^\bla=\Dbe(\cata^\bla)$ as categorifications of tensor products of $U_q(\fg)$-modules: 

\begin{thm}\label{categor}

 The derived category $\cat^\bla$  carries functors categorifying all the structure maps of the ribbon category of $U_q(\fg)$-modules:
\begin{enumerate} 
\renc{\labelenumi}{(\roman{enumi})}
\item If $\si$ is a braid, then we have an exact functor
  $\mathbb{B}_{\si}\colon\cat^\bla\to\cat^{\si(\bla)}$
  such that the induced map $K_0(\alg^\bla)\to
  K_0(\alg^{\si(\bla)})$ is the action of the appropriate
  composition of R-matrices and flips.  Furthermore, these functors
  induce a strong action of the braid groupoid on the categories
  associated to permutations of the set $\bla$.
\item If two consecutive elements of $\bla$ label dual representations
  and $\bla^-$ denotes the sequence with these removed, then there are functors $\mathbb{T},\mathbb{E}\colon \cat^{\bla}\to \cat^{\bla^-}$
  which induces the quantum trace and evaluation on the Grothendieck group, and
  similarly functors $\mathbb{K},\mathbb{C}\colon \cat^{\bla^-}\to
  \cat^{\bla}$ for the coevaluation map and quantum cotrace maps.
\item When $\fg=\mathfrak{sl}_n$,  the structure functors above can be described in
  terms of twisting and Enright-Shelton functors on $\cO$.
\end{enumerate}
\end{thm}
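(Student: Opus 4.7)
The plan is to prove the four parts largely in order, with each structure map constructed from the categorified $\mathfrak{g}$-action on $\cat^{\bla}$ provided by \cite{CTP}, and with the braid/compatibility relations checked last once the functors are in hand.

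For part (i), I would build $\mathbb{B}_{\sigma}$ from its action on a simple transposition $s_k$ swapping $\la_k$ and $\la_{k+1}$, using a complex of $(T^{s_k\bla},T^\bla)$-bimodules in the spirit of Chuang--Rouquier's Rickard complex: since the $\fE_i,\fF_i$ restrict to give $\mathfrak{sl}_2$-categorifications at each root, the standard Rickard recipe can be adapted by replacing the usual reflection inside a single categorification with a ``twisting bimodule'' that moves data from the $k$-th to the $(k{+}1)$-st tensor factor. The effect on $K_0$ is then a formal computation with the $q$-divided powers, matching Lusztig's R-matrix formula up to the expected flip. The braid relations, and more generally the weak braid-groupoid action, should follow either by an explicit chain homotopy on triple complexes or by quoting general results on braid actions from categorified quantum groups (Cautis--Kamnitzer, Lauda--Queffelec--Rose, etc.), suitably adapted to the non-minuscule setting here.

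For part (ii), assuming $\la_{k+1}=\la_k^*$, the coevaluation $\mathbb{K}$ would be an induction/extension-of-scalars functor from $T^{\bla^-}$ along the natural algebra map obtained by ``inserting a pair of dual labels at position $k$'' — concretely, tensoring with a distinguished bimodule attached to the cup diagram. Its adjoint gives the evaluation $\mathbb{E}$. The quantum trace $\mathbb{T}$ and quantum cotrace $\mathbb{C}$ then differ from $\mathbb{E}$ and $\mathbb{K}$ only by a grading shift depending on $\langle\la,\rho\rangle$-type data encoding the ribbon twist $\theta_{\la_k}$. Verification on $K_0$ is a direct matrix-coefficient computation against standards $\st{\bmu}$, using that pairing a standard with a costandard in $\cata^{\bla}$ reduces to a tensor-product pairing in the Grothendieck group.

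For part (iii), once the functors are constructed abstractly, the translation to $\cO$ in type A proceeds by invoking the Morita/Koszul equivalence between $T^{\bla}$ and blocks of parabolic category $\cO$ for $\fgl_N$, already implicit in \cite{CTP} via cyclotomic KLR presentations (Brundan--Kleshchev). Under this equivalence, induction/restriction and the Rickard complexes match the standard twisting and Zuckerman/Enright--Shelton functors on $\cO$ on the nose; this is essentially a check on generators and the Hecke-categorical $2$-action. For part (iv), the orthodox basis is defined as $\{[P(\bmu)]\}$. To identify it with Lusztig's canonical basis in symmetric type, I would verify the three characterizing properties: bar-invariance via a graded duality on $\cata^{\bla}$ that fixes indecomposable projectives up to isomorphism; containment in the integral form via the $\Z[q,q^{-1}]$-structure on $K_0$; and upper-triangularity with $q\Z[q]$-entries against standards by reading off graded $\Delta$-filtration multiplicities. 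Total positivity in any type is then automatic: a totally non-negative element of the split real form acts through a composition of $\fE_i^{(n)},\fF_i^{(n)}$ and integer shifts, all of which send projectives to direct sums of projectives, so the structure constants in the orthodox basis are non-negative integers.

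The main obstacle is part (i): constructing the bimodule complexes $\mathbb{B}_{s_k}$ explicitly enough that their invertibility and the braid relations can be verified, since $T^{\bla}$ is much less rigid than a cyclotomic KLR algebra at a single weight, and the relevant chain homotopies have to be produced by hand. Part (iv) depends on this because identification with Lusztig's canonical basis ultimately uses compatibility of the basis with the braid action, so any gap in (i) propagates forward.
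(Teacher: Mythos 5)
Your outline is sound at the level of section-by-section structure, but two of the paper's key moves are quite different from what you propose, and one of them is a genuine omission rather than an alternative route.

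For part (i), the paper does not construct $\mathbb{B}_{\si_k}$ from a Rickard-type complex of bimodules built out of divided powers. Instead it introduces a single explicit $(T^{\bla},T^{\si_k\bla})$-bimodule $\bra_{\si_k}$, spanned diagrammatically by the same pictures that define $T^{\bla}$ but with exactly one crossing of the $k$-th and $(k{+}1)$-st red strands, subject to the local relations plus two extra relations allowing black strands to slide through a red crossing. The functor is $-\overset{L}\otimes\bra_{\si_k}$. This makes the braid relations a Reidemeister-type isotopy argument on red strands (using a basis theorem for $\tilde\bra_\si$ to clear black strands out of the triangle), and invertibility is deduced from the half-twist $\mathbb{B}_\tau$ sending projectives to self-dual tilting modules and a dimension count against $\langle -,-\rangle_1$. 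The chain-homotopy difficulty you correctly flag as the main obstacle is exactly what this diagrammatic bimodule is designed to avoid; the real work is showing $\bra_\si$ has a standard filtration for minimal lifts and that $\mathbb{B}_\si$ preserves $\cat^\bla$, which is done by a triangular induction on $p_n\Rightarrow f_n\Rightarrow s_n\Rightarrow p_{n+1}$.

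For part (ii), your description of coevaluation as tensoring with a distinguished ``cup'' bimodule and of the quantum trace as its ribbon-twisted variant is right in spirit, but you have missed a point that the paper treats as a central discovery: the computation $\mathbb{B}_{\si_1}\Lco\cong L_{\la^*}[-2\rcl](-2\llrr-\lllr)$ forces a \emph{non-standard} ribbon element (the Snyder--Tingley one), because the usual ribbon element is incompatible with the required identity $\mathbb{B}^2\Lco\cong\mathbb{R}_1^{-1}\mathbb{R}_2^{-1}\Lco$. Without this, the quantum trace and cotrace functors you would write down do not decategorify to the correct maps. This is not a normalization detail one can fix afterward; it affects the resulting knot invariants by a framing-dependent sign and is needed for the S-move and pitchfork isotopies to hold.

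For part (iv), your three-property check mirrors the paper's strategy, but the mechanism realizing bar-invariance is specifically that $\star\circ\mathfrak{S}$, with $\mathfrak{S}$ the Serre functor on perfect complexes, categorifies Lusztig's $\Psi$; the identification of $\mathfrak{S}$ with the full twist up to shift is a consequence, not the driver. In particular the dependence of (iv) on (i) is weaker than you suggest. The upper-triangularity and $\Z[q^{-1}]$-lattice conditions come from positivity of grading shifts in the standard filtration, which requires the symmetric Cartan matrix and characteristic zero, and the single-factor case imports Varagnolo--Vasserot on KLR projectives. Your total-positivity argument in terms of exponentials matching divided-power actions is fine.
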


As mentioned earlier, these functors have a
topological interpretation: the algebra $\alg^\bla$ is defined using red strands labeled with weights; we imagine placing them in $\R^3$ and thickening them to ribbons (so
that we keep track of twists in them). Then our functors correspond to
the following operations on ribbons:
\begin{itemize}
\item Crossing two ribbons: the corresponding operator in
  representations of the quantum group is called the {\bf braiding} or
  {\bf R-matrix}\footnote{As usual, the R-matrix is a map between tensor products of representations $V\otimes W\to V\otimes W$ intertwining the usual and opposite coproducts; we use the term {\bf braiding} to refer to the composition of this with the usual flip map, which is thus a homomorphism of representations $V\otimes W\to W\otimes V$.}.
\item Creating a cup, or closing a cap: the corresponding operators in
  representations of the quantum group are called the {\bf
    coevaluation} and {\bf quantum trace}.
\item Adding a full twist to one of the ribbons: the corresponding
  operator in the quantum group is called the {\bf ribbon element}.
\end{itemize}

Since all ribbon knots can be built using these operations, the quantum knot invariants are given by a composition of the
decategorifications of the functors constructed in Theorem
\ref{categor}, as described in \cite[\S 4]{CP}; combining the functors
themselves in the same pattern gives the knot homology of Theorem
\ref{knot-hom}.
\medskip

Let us now summarize the structure of the paper.
\begin{itemize}
\item In Section \ref{sec:braid-rigid}, we prove Theorem
  \ref{categor}(i).  That is, we construct the functor lifting the
  braiding of the monoidal category of
  $U_q(\fg)$-representations.  This functor is derived tensor product
  with a natural bimodule.  A particularly interesting and important
  special case is the functors corresponding to the half-twist braid,
  which sends projective modules to tiltings and the full twist braid,
  which we show gives the right Serre functor of $\cat^\bla$. 
\item In Section \ref{sec:rigid}, we prove Theorem
  \ref{categor}(ii). The most important element of this is to identify
  a special simple module in the category for a pair of dual
  fundamental weights, which categorifies an invariant vector.
  Interestingly, we are essentially forced to choose a non-standard
  ribbon element in order to obtain a ribbon functor which fits the
  same compatibilities.  This means we will categorify the knot
  invariants for a slightly unusual ribbon structure on the category
  of $U_q(\fg)$ modules, but this will only have the effect of
  multiplying the quantum invariants by an easily determined sign (see
  Proposition \ref{schur-indicate}).
\item In Section \ref{sec:invariants}, we prove Theorem \ref{knot-hom}
  using the functors constructed in Theorem \ref{categor} and a small
  number of explicit computations.  We also suggest a map for the
  functoriality along a cobordism between links. However, this map is 
  defined by choosing a handle decomposition of the cobordism, and at
  the moment, we have no proof that the induced map is independent of
  this choice.
\item In Section \ref{sec:comparison-functors}, we consider the special case where $\fg\cong \mathfrak{sl}_n$; in our previous paper \cite{WebCTP}, we showed that the categories in this case are related to category $\cO$ for $\mathfrak{gl}_N$.  Now we relate the functors  appearing Theorem
  \ref{categor} to previously defined functors on category $\cO$.

  This allows us to show the portions of Theorem \ref{knot-hom} regarding comparisons to Khovanov homology and
  Mazorchuk-Stroppel-Sussan homology.
\end{itemize}

  We should note that an earlier version of this paper
  had contained some results relating on canonical bases.  In the
  interest of giving these results in sufficient detail, they have been moved to another paper \cite{WebCB}.

\subsection*{Notation}

We let $\fg$ be a finite-dimensional simple complex Lie algebra, which
we will assume is fixed for the remainder of the paper.  
In \cite{WebCB}, we
will investigate tensor products of highest and lowest weight modules
for arbitrary symmetrizable Kac-Moody algebras, hopefully allowing us
to extend the contents of Sections \ref{sec:braid-rigid}, \ref{sec:rigid} and
\ref{sec:invariants} to this case.

We fix from now on an order on the simple roots of $\fg$, which we
will simply denote with $i<j$ for two nodes $i,j$.  This choice is
purely auxiliary, but will be useful for breaking symmetries.

Consider the weight
lattice $\wela(\fg)$ and root lattice $\rola(\fg)$, and the
simple roots $\al_i$ and coroots $\al_i^\vee$.  Let
$c_{ij}=\al_j^{\vee}(\al_i)$ be the entries of the Cartan matrix.  Let $D$ be the determinant of the Cartan matrix.  For technical reasons, it will often be convenient for us to adjoint a $D$th root of $q$, which we denote $q^{\nicefrac 1D}$.

We let $\langle
-,-\rangle$ denote the symmetrized inner product on $\wela(\fg)$,
fixed by the fact that the shortest root has length $\sqrt{2}$
and $$2\frac{\langle \al_i,\la\rangle}{\langle
  \al_i,\al_i\rangle}=\al_i^\vee(\la).$$ As usual, we let $2d_i
=\langle\al_i,\al_i\rangle$, and for $\la\in\wela(\fg)$, we
let $$\la^i=\al_i^\vee(\la)=\langle\al_i,\la\rangle/d_i.$$

We let $\rho$ be the unique weight such that $\al^\vee_i(\rho)=1$ for all $i$ and
$\rho^\vee$ the unique coweight such that $\rho^\vee(\al_i)=1$ for all $i$.  We
note that since $\rho\in \nicefrac 12\rola$ and $\rho^\vee\in
\nicefrac 12\wela^*$, for any weight $\la$, the numbers $\llrr$ and
$\rcl$ are not necessarily integers, but $2\llrr$ and $2\rcl$
are (not necessarily even) integers.

Throughout the paper, we will use $\bla=(\la_1,\dots, \la_\ell)$ to
denote an ordered $\ell$-tuple of dominant weights, and always use the
notation $\la=\sum_{i}\la_i$.  

We let $U_q(\fg)$ denote the deformed universal enveloping algebra of
$\fg$; that is, the 
associative $\C(\qD)$-algebra given by generators $E_i$, $F_i$, $K_{\mu}$ for $i$ and $\mu \in \wela(\fg)$, subject to the relations:
\begin{center}
\begin{enumerate}[i)]
 \item $K_0=1$, $K_{\mu}K_{\mu'}=K_{\mu+\mu'}$ for all $\mu,\mu' \in \wela(\fg)$,
 \item $K_{\mu}E_i = q^{\al_i^{\vee}(\mu)}E_iK_{\mu}$ for all $\mu \in
 \wela(\fg)$,
 \item $K_{\mu}F_i = q^{ -\al_i^{\vee}(\mu)}F_iK_{\mu}$ for all $\mu \in
 \wela(\fg)$,
 \item $E_iF_j - F_jE_i = \delta_{ij}
 \frac{\tilde{K}_i-\tilde{K}_{-i}}{q^{d_i}-q^{-d_i}}$, where
 $\tilde{K}_{\pm i}=K_{\pm d_i \al_i}$,
 \item For all $i\neq j$ \[\sum_{a+b=-c_{ij}+1}(-1)^{a} E_i^{(a)}E_jE_i^{(b)} = 0
 \qquad {\rm and} \qquad
 \sum_{a+b=-c_{ij} +1}(-1)^{a} F_i^{(a)}F_jF_i^{(b)} = 0 .\]
\end{enumerate} \end{center}

This is a Hopf algebra with coproduct on Chevalley generators given
by $$\Delta(E_i)=E_i\otimes 1
+\tilde K_i\otimes E_i\hspace{1cm}\Delta(F_i)=F_i\otimes \tilde K_{-i}
+1 \otimes F_i$$
and antipode on these generators defined by
$S(E_i)=-\tilde{K}_{-i}E_i,S(F_i)=-F_i\tilde{K}_{i}.$

We should note that this choice of coproduct coincides with that of
Lusztig \cite{Lusbook}, but is opposite to the choice in some of our
other references, such as \cite{CP,STtwist}.  In particular, we should
not use the formula for the R-matrix given in these references, but
that arising from Lusztig's quasi-R-matrix.  There is a unique element
$\Theta\in \widehat{U_q^-(\fg)\otimes U_q^+(\fg)}$ such that
$\Delta(u)\Theta=\Theta\bar \Delta(u)$, where $$\bar \Delta(E_i)=E_i\otimes 1
+\tilde K_{-i}\otimes E_i\hspace{1cm}\bar\Delta(F_i)=F_i\otimes \tilde K_{i}
+1 \otimes F_i.$$ If we let $A$ be the operator which acts on weight
vectors by $A(v\otimes w)= q^{\langle \wt(v),\wt(w)\rangle}v\otimes
w$, then as noted by Tingley \cite[2.10]{TingRqR}, $R=A\Theta^{-1}$ is a
universal R-matrix for the coproduct $\Delta$ (which Tingley denotes
$\Delta^{op}$).  This is the opposite of the $R$-matrix of \cite{CP}
(for example).

We let $U_q^\Z(\fg)$ denote the Lusztig (divided powers) integral form
generated over $\Z[\qD,q^{\nicefrac{-1}D}]$ by
$\frac{E_i^n}{[n]_q!},\frac{F_i^n}{[n]_q!}$ for all integers $n$
of this quantum group.  The integral form of the representation of highest weight $\la$ over this quantum group will be denoted by $V_\la^\Z$, and $V_{\bla}^\Z=V_{\la_1}^\Z\otimes_{\Z[\qD,q^{\nicefrac{-1}D}]} \cdots \otimes_{\Z[\qD,q^{\nicefrac{-1}D}]} V_{\la_\ell}^\Z$.  We let $V_\bla=V_\bla^\Z\otimes_{\Z[\qD,q^{\nicefrac{-1}D}]}\Z((\qD))$ be the tensor product with the ring of integer valued Laurent series in $\qD$; this is the completion of $V^\Z_\bla$ in the $q$-adic topology.

We let $\alg^\bla$ be the algebra of red and black strands defined in \cite[\S 2]{WebCTP} and let $\cata^\bla=\alg^\bla-\modu$ be the category of graded finite
  dimensional representations of $\alg^\bla$ graded by $\nicefrac{1}D\Z$.  This is a minor conventional difference with \cite{WebCTP}, where $\Z$-graded modules were used, but this is such a minor difference we felt it did not merit a notational change.  

We let  $\cat^\bla=\Dbe(\cata^\bla)$ be the derived category of complexes of projective objects in $\cata^\bla$ which are 0 in homological degree $j$ and internal degree $i$ if $i+j\ll 0$ or $j\gg 0$ (here we take the convention that the differential increases homological degree).  This notation agrees with that of \cite [\S 2.12]{BGS96}. 

As before, the ring $\Z[\qD,q^{\nicefrac{-1}{D}}]$ acts on $K_0(\alg^\bla)$ by $q^A[M]=[M(A)]$ for any $A\in \frac{1}{D}\Z$.  We note that $K_0(\cat^\bla)$ is the completion of $K_0(\alg^\bla)$ in the $q$-adic topology; thus corresponding to the isomorphism of \cite[Theorem 3.6]{WebCTP}, we also have an isomorphism $K_0(\cat^\bla)\cong V_\bla$ as $\Z((\qD))$-modules.

We will freely use other notation from the companion paper \cite{WebCTP}, but as a courtesy to the reader, we include a list of the most important such notations:
\begin{center}
\begin{tabular}{ll|ll}
$\alg^\bla$ & algebra corresponding to $\bla$&$P_\Bi^\kappa$ & projective module for $(\Bi,\kappa)$\\
$\cata^\bla$ & abelian category of $\alg^\bla$-modules & $S_\Bi^\kappa$ & standard module for $(\Bi,\kappa)$\\
$\cat^\bla$ & derived category of $\alg^\bla$-modules&$\tU$ & the 2-quantum group\\
$\fF_i$ & induction functor & $\mathbb{S}^{\bla}$ & standardization functor $\cat^{\la_1;\dots;\la_\ell}\to\cat^\bla$\\
$\fE_i$ & restriction functor & $\langle -,-\rangle$ & Euler form on $K_0(\alg^\bla)\cong V_\bla$\\
&& $\langle -,-\rangle_1$ & Euler form specialized at $q=1$
\end{tabular}
\end{center}

\subsection*{Acknowledgments} We would like to thank Catharina Stroppel for extremely helpful commentary and pointing out more than one error; and Aaron Lauda, Jim Humphreys, Joel Kamnitzer, Ben Elias, Mikhail Khovanov,  Scott Carter, Eitan Chatav and Kevin Walker for thoughtful conversations and useful feedback.

\section{Braiding functors}
\label{sec:braid-rigid}

\renc{\thethm}{\arabic{section}.\arabic{thm}}

\setcounter{equation}{0}

\subsection{Braiding}
\label{sec:braiding}

Recall that the category of integrable $U_q(\fg)$ modules (of type I)
is a {\bf braided category}; that is, for every pair of representations
$V, W$, there is a natural isomorphism $\sigma_{V,W}\colon V\otimes
W\to W\otimes V$ satisfying various commutative diagrams (see, for
example, \cite[5.2B]{CP}, where the name ``quasi-tensor category'' is
used instead).  This braiding is described in terms of an $R$-matrix
$R\in \widehat{U(\fg)\otimes U(\fg)}$, where we complete the tensor
square with respect to the kernels of finite dimensional
representations, as usual.

As we mentioned earlier, we were left at times with
difficult decisions in terms of reconciling the different conventions
which have appeared in previous work.  One which we seem to be forced
into is to use the opposite $R$-matrix from that usually chosen (for
example in \cite{CP}), which would usually be denoted $R^{21}$. Thus,
we must be quite careful about matching formulas with references such
as \cite{CP}.

Our first task is to describe the braiding in terms of an explicit bimodule
$\bra_{\si}$ attached to each braid. Let us describe the
bimodule $\bra_{\si_k}$ attached to a single positive crossing of the
$k$th and $k+1$st strands.  

Like the algebra $\alg^\bla$, the bimodule $\bra_{\si_k}$ is spanned by
pictures.  In fact, it is spanned by pictures which are identical to
those used in the definition of $\alg^\bla$, except that we must have a
single crossing between the $k$th and $k+1$st red strands.  These
pictures are acted upon on the left by $\alg^\bla$ and on the right by
$\alg^{\si_k\cdot\bla}$ in the obvious way.  More generally, we can view the sum of these over all $\bla$ as a bimodule over the universal algebra $\alg=\oplus_\bla \alg^\bla$.  The module $\bra_{\si_k}$  is homogeneous, where a diagram is assigned a grading as in \cite[\S 2.1]{WebCTP}, but with the red crossing given degree $-\langle\la_k,\la_{k+1}\rangle$.

\begin{figure}[ht]
  \begin{equation*}
    \begin{tikzpicture}
      [very thick,scale=1.4]
      \usetikzlibrary{decorations.pathreplacing} \draw[wei] (-3.5,-1)
      -- +(0,2) node[at start,below]{$\la_1$} node[at
      end,above]{$\la_1$}; \draw (-2.75,-1) -- +(0,2); \node at
      (-2,0){$\cdots$}; \node at (2,0){$\cdots$}; \draw[wei] (3.5,-1)
      -- +(0,2) node[at start,below]{$\la_\ell$} node[at
      end,above]{$\la_\ell$}; \draw (2.75,-1) -- +(0,2); \draw[wei]
      (1.2,-1) -- (-1,1) node[at start,below]{$\la_{k+1}$} node[at
      end,above]{$\la_{k+1}$} node[pos=.55,fill=white,circle]{}; \draw[wei]
      (-1.2,-1) -- (1,1) node[at start,below]{$\la_{k}$} node[at
      end,above]{$\la_{k}$}; \draw (.5,-1) to[in=-90, out=90]
      (1.5,1); \draw (.8,-1) to[in=-90, out=90] (1.8,1); \draw
      (-.5,-1) to[in=-90, out=100] (-1.5,1); \draw (-.8,-1) to[in=-90,
      out=100] (-1.8,1); \node at (0,.7){$\cdots$}; \node at
      (0,-.7){$\cdots$};
    \end{tikzpicture}
  \end{equation*}
  \caption{An example of an element of $\bra_{\si_k}$.}
\end{figure}

As before, we need to mod out by relations:
\begin{itemize}
\item We impose all local relations from \cite[\S 2]{WebCTP}, including
  planar isotopy.
\item Furthermore, we have to add the relations (along with their mirror images)
  \begin{equation*}
    \begin{tikzpicture}
      [very thick,scale=1] \usetikzlibrary{decorations.pathreplacing}
      \draw[wei] (1,-1) -- (-1,1) node[at start,below]{$\la_k$}
      node[at end,above]{$\la_k$} node[midway,fill=white,circle]{};
      \draw[wei] (-1,-1) -- (1,1) node[at start,below]{$\la_{k-1}$}
      node[at end,above]{$\la_{k-1}$}; \draw (0,-1)
      to[out=135,in=-135] (0,1); \node at (3,0){=}; \draw[wei] (7,-1)
      -- (5,1) node[at start,below]{$\la_k$} node[at
      end,above]{$\la_k$} node[midway,fill=white,circle]{}; \draw[wei] (5,-1)
      -- (7,1) node[at start,below]{$\la_{k-1}$} node[at
      end,above]{$\la_{k-1}$}; \draw (6,-1) to[out=45,in=-45] (6,1);
    \end{tikzpicture}
  \end{equation*}
  \begin{equation*}
    \begin{tikzpicture}
      [very thick,scale=1] \usetikzlibrary{decorations.pathreplacing}
      \draw[wei] (1,-1) -- (-1,1) node[at start,below]{$\la_k$}
      node[at end,above]{$\la_k$} node[midway,fill=white,circle]{};
      \draw[wei] (-1,-1) -- (1,1) node[at start,below]{$\la_{k-1}$}
      node[at end,above]{$\la_{k-1}$}; \draw (1.8,-1)
      to[out=145,in=-20] (-1,-.2) to[out=160,in=-80] (-1.8,1); \node
      at (3,0){=}; \draw[wei] (7,-1) -- (5,1) node[at
      start,below]{$\la_k$} node[at end,above]{$\la_k$}
      node[midway,fill=white,circle]{}; \draw[wei] (5,-1) -- (7,1) node[at
      start,below]{$\la_{k-1}$} node[at end,above]{$\la_{k-1}$}; \draw
      (7.8,-1) to[out=100,in=-20] (7,.2) to[out=160,in=-35] (4.2,1);
    \end{tikzpicture}
  \end{equation*}
\end{itemize}
Following our convention in \cite{WebCTP}, we use $\tilde{\bra}_{\si_k}$ to denote the corresponding $\tilde \alg^\bla-\tilde {\alg}^{\si_k\cdot \bla}$ bimodule where the relation that any diagram with a violating strand is 0 is {\it not} imposed.

Recall
that for any permutation $w$, there is a unique positive braid $\si_w$ which
induces that permutation on the ends of the strands of the same length
of the permutation, constructed by a picking a reduced expression $w=s_{i_1}\cdots s_{i_m}$, and taking the product $\si_w=\si_{i_1}\cdots \si_{i_m}$.  We call this the permutation's {\bf minimal
  lift}.  
Fixing a reduced expression for each $w$, we let
$\tilde\bra_\si=\tilde\bra_{\si_{i_1}}\otimes_{\tilde \alg} \cdots
\otimes_{\tilde \alg} \tilde \bra_{\si_{i_m}}.$

\begin{lemma}\label{lem:independent}
  For $\si$ a minimal lift of $w$, the module $\tilde\bra_\si$ is independent of reduced expression.
\end{lemma}
\begin{proof}
  The module $\tilde\bra_\si$ can be described diagrammatically
  exactly as $\tilde\bra_{\si_k}$ is, except that the red strands
  should cross according to the reduced expression of $w$.  Thus, we
  must show that this module is independent of reduced expression.  

When two distant reflections switch order, this can be achieved via an
isotopy that just changes the $y$-coordinates of their crossings. This
is obviously a canonical isomorphism.

If we have an expression of the form $\cdots s_{i}s_{i+1}s_{i}\cdots$,
then the red strands involved form a triangle, and we can use the
relations to remove all black strands from that triangle.  Applying
the braid relation here simply collapses this red triangle, and
creates a new one also void of strands.  This induces the desired isomorphism.
\end{proof}


Recall from \cite[\S 2.3]{WebCTP} that for any reduced word in $S_{n+\ell}$ which permutes the red strands according to $\si$, we obtain an element $\psi_w\in \tilde{\bra}_{\si_k}$. Fix a choice of reduced word $\Bw$ for each such permutation.
\begin{prop}\label{minimal-basis}
If $\si$ is a minimal lift, then the bimodule $\tilde{\bra}_{\si}$ has a basis given by diagrams $\psi_\Bw$ times an arbitrary monomial in the dots on black strands.  These elements span $\bra_\si=\bra_{\si_{i_1}}\otimes_{\alg} \cdots \otimes_{\alg} \bra_{\si_{i_m}}$.
\end{prop}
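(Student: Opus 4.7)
My plan is to follow the two-step pattern for KLR-type basis theorems: establish spanning by a diagrammatic reduction, then establish linear independence by extending the faithful polynomial representation from \cite{CTP}.

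For spanning, I induct on the total number of crossings in a diagram $D\in\tilde{\bra}_\si$. Planar isotopy normalizes the position of dots on the black strands. The KLR-type relations already proved in \cite{CTP} rewrite the black-black portion of $D$ as the fixed reduced word $\Bw$ modulo diagrams with strictly fewer crossings (the usual situation where the triple-point and square relations produce a reduced-word expression plus a lower-order error). The novel feature is the $m$ red crossings produced by $\si$, but the bimodule relations displayed just above the statement let any black strand slide past each red crossing without producing dots or extra crossings: the right-hand sides of those relations are simple isotopies carrying no error terms. Because $\si$ is a minimal lift, there are no pairs of red strands crossing twice, so no red-red relations need to be invoked. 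Iterating until no further reductions apply exhibits $D$ as a linear combination of elements of the claimed form.

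For linear independence I plan to extend the polynomial representation of $\tilde{\alg}^\bla$ from \cite[\S 2]{CTP} to a representation of the $\tilde{\alg}^\bla$-$\tilde{\alg}^{\si\bla}$-bimodule $\tilde{\bra}_\si$. In that representation, dots act by multiplication and black-black crossings act by divided-difference-type operators on a direct sum of polynomial rings indexed by sequences of simple roots. Each red crossing in $\tilde{\bra}_\si$ should act by the canonical identification between the polynomial summand labeled by a pair $(\Bi,\bla)$ and the summand labeled by $(\Bi,\si\bla)$; the displayed bimodule relations are precisely what is required for this to intertwine the black-strand operators on the two sides. Once the action is constructed, the standard leading-term analysis for products of divided differences and dot monomials shows that the proposed basis elements act as linearly independent operators, so they form a basis of $\tilde{\bra}_\si$. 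Spanning of $\bra_\si$ then follows immediately, since $\bra_\si$ is a quotient of $\tilde{\bra}_\si$ by the violating-strand relation.

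The main obstacle is constructing and verifying this bimodule polynomial representation: one must choose the red-crossing action so that the new bimodule relations hold simultaneously with all the red-black interactions already imposed in $\tilde{\alg}^\bla$ and $\tilde{\alg}^{\si\bla}$, and then check well-definedness despite multiple reduced words potentially presenting the same bimodule element. Once that is in place, the linear-independence step reduces to the same divided-difference calculations already used in \cite{CTP}, and the spanning step is a straightforward diagrammatic induction.
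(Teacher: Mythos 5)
Your overall scheme matches the paper's: spanning follows by the same diagrammatic reduction as \cite[Theorem 2.4]{CTP} (the paper even says "literally the same"), and linear independence ultimately rests on a polynomial representation. Where you diverge is in the final step of the linear-independence argument. You propose to build the bimodule polynomial representation and then run a leading-term analysis directly on $\tilde\bra_\si$. The paper instead makes the ring $\bigoplus_\si\tilde\bra_\si$ act on polynomials, observes that the horizontal-composition map $R\to\tilde\bra_\si$ from the Khovanov--Lauda algebra $R$ is injective (because its image acts by the KL polynomial representation), and then turns any hypothetical relation in $\tilde\bra_\si$ into a relation in $R$ by composing at top and bottom with diagrams that pull all black strands to the right of all red strands. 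That reduction lets the paper invoke Khovanov--Lauda's basis theorem for $R$ as a black box rather than re-running the degree/leading-term argument in the more cluttered bimodule setting.

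Both routes work, but note where your version is more laborious. You will have to specify exactly how a red-red crossing acts in the polynomial representation and check compatibility with the bimodule relations, and then redo the leading-term computation rather than quoting a known result. Also be a little careful with the claim that the red crossing acts as a plain identification of polynomial summands: the red crossing carries the grading shift $-\langle\la_k,\la_{k+1}\rangle$, and in the $\tilde\alg^\bla$-polynomial representation the red strands are not inert, so you should verify that the action you pick is degree-correct and intertwines the operators for black strands on either side (the displayed slide relations are necessary but you still need to check sufficiency). If you prefer to sidestep those checks, the paper's compose-and-reduce-to-$R$ trick is the cleaner way to finish.
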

\begin{proof}
The proof is essentially identical to that of \cite[Theorem 2.4]{WebCTP}; the argument that these elements span is literally the same.  

Linear independence is slightly more complex.  We note that we have a natural map $\tilde\bra_{\si}\otimes_{\tilde E}\tilde\bra_{\si'}\to \tilde\bra_{\si\si'}$ given by stacking which makes the sum over all positive braids $\si$ into a ring. This ring has a polynomial representation, just like that defined by $\tilde{\alg}_\bla$ in the proof of \cite[Theorem 2.4]{WebCTP}.  This shows that the map $R\to \tilde\bra_\si$ given by horizontal composition is injective (since the image acts by Khovanov and Lauda's polynomial representation). We can reduce to this case by taking any other relation, and composing at the top and bottom with elements pulling all strands to the right.  Thus, a non-trivial relation between our claimed basis vectors would give a nontrivial relation between Khovanov and Lauda's basis for $R$, which is thus a contradiction.
\end{proof}

\begin{defn}
  Let $\mathbb{B}_{\si_k}$ be the functor
  $-\overset{L}\otimes\bra_{\si_k}:D^-(\cata^\bla)\to D^-(\cata^{\si_k\cdot\bla})$.
\end{defn}
Here, $D^-(\cata^\bla)$ refers to the bounded above derived category of $\cata^\bla$;  {\it a priori}, the functor $\mathbb{B}_{\si_k}$ does not obviously preserve the subcategory $\cat^\bla\subset D^-(\cata^\bla)$. In order to show this, and certain other important properties of this functor, we require some technical results.

\begin{prop}\label{bra-commute}
The functors $\mathbb{B}_{\si_k}$ commute with all 1-morphisms in $\tU$.
\end{prop}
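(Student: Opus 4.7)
The plan is to reduce the claim to constructing natural isomorphisms of bimodules and then realize those isomorphisms diagrammatically. Since $\tU$ is generated as a 2-category by the 1-morphisms $\fE_i$, $\fF_i$ and grading shifts, and grading shifts trivially commute with every functor, it suffices to exhibit, for every $i$, natural isomorphisms
$$\mathbb{B}_{\si_k}\circ \fF_i \cong \fF_i\circ \mathbb{B}_{\si_k} \qquad\text{and}\qquad \mathbb{B}_{\si_k}\circ \fE_i \cong \fE_i\circ \mathbb{B}_{\si_k}$$
between the appropriate derived categories.

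I would first handle $\fF_i$. By the construction in \cite{CTP}, $\fF_i$ is given by tensor product with an explicit bimodule spanned by diagrams in which a single black $i$-strand has been adjoined on the right. The composite bimodule for $\fF_i\circ\mathbb{B}_{\si_k}$ is then spanned by diagrams with a red crossing at positions $k,k+1$ together with an $i$-strand running along the right edge, while the composite for $\mathbb{B}_{\si_k}\circ\fF_i$ is spanned by similar diagrams in which the $i$-strand enters the crossing region from the other side. I would define the candidate isomorphism as the pictorial operation of sliding this $i$-strand vertically through the crossing region. Well-definedness reduces to the local isotopy relations from \cite[\S 2]{CTP} together with the two red-triangle relations displayed above (which handle the case when the slide passes through the actual red intersection point). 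Bijectivity is checked on explicit bases: by Proposition \ref{minimal-basis} and the analogous basis statement for the $\fF_i$-bimodule, both sides have bases of the form $\psi_\Bw$ times a monomial in dots on black strands, and the sliding operation gives a bijection between these bases.

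To pass from this bimodule isomorphism to a functor isomorphism on the derived category, I would observe that the $\fF_i$-bimodule is projective as a right $\alg^\bla$-module (it is induction along the subalgebra inclusion that omits the adjoined $i$-strand), so $\fF_i$ is exact and no derived corrections appear in the composition $\fF_i\circ \mathbb{B}_{\si_k}$. The bimodule isomorphism therefore yields the desired natural isomorphism of functors directly. As a byproduct, this also shows that $\mathbb{B}_{\si_k}$ preserves the subcategory $\cat^\bla\subset D^-(\cata^\bla)$, since the projective generators $P_\Bi^\kappa$ of $\cata^\bla$ are obtained from the vacuum projective by iterated application of the functors $\fF_i$.

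For $\fE_i$, I would either repeat the diagrammatic argument with the bimodule implementing restriction, or invoke the biadjunction $\fE_i\dashv \fF_i$ (up to the appropriate weight-dependent shift) and transport the isomorphism for $\fF_i$ across the adjunction. The main technical obstacle I anticipate is the careful bookkeeping around the violating-strand convention, which must be tracked through the sliding move when comparing $\bra_{\si_k}$ with $\tilde\bra_{\si_k}$: the argument ultimately rests on the observation that sliding a black $i$-strand vertically past a red crossing does not alter the cumulative weight to the left of any strand at any horizontal level, so a diagram is violating before the slide if and only if its image is violating after, and the isomorphism descends cleanly from $\tilde\bra_{\si_k}$ to $\bra_{\si_k}$.
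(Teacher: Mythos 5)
Your proof takes essentially the same approach as the paper's: reduce to $\fE_i$ and $\fF_i$, write down the obvious diagrammatic bimodule map (sliding the adjoined black strand past the red crossing), verify it is an isomorphism at the $\tilde{}\,$-level using the basis from Proposition~\ref{minimal-basis}, and observe that the violating-strand condition is preserved under this map so the isomorphism descends. The only caveat is your final parenthetical ``byproduct'' claim that commutation with $\fF_i$ alone shows $\mathbb{B}_{\si_k}$ preserves $\cat^\bla$ --- the paper defers this point to Lemma~\ref{pro-sta}, where it requires the additional input that $\bra_{\si_k}$ has a finite-length free resolution as a left module, and your argument as stated does not yet control boundedness of the derived tensor product.
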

\begin{proof}
Of course, we only need to check this for $\fF_i$ and $\fE_i$.  In both cases, there is an obvious map $u\circ \mathbb{B}_{\si_k}\to \mathbb{B}_{\si_k}\circ u$, which is an isomorphism on the $\tilde{}\,$-level, by the basis given in Proposition \ref{minimal-basis}.  The preimage of any element with a violating strand under this map also has a violating strand, so it gives the desired isomorphism.
\end{proof}

\begin{prop}\label{sta-braid}
   \begin{math}\displaystyle
    \mathbb{B}_j\left(\mathbb{S}^{\bla}(P_{\dots;\Bi_j;\emptyset;\dots})\right)
    \cong \mathbb{S}^{\bla}(P_{\dots;\emptyset;\Bi_j;\dots})\Big(\big\langle\la_j-\bal(j),\la_{j+1}\big\rangle\Big)
  \end{math}
\end{prop}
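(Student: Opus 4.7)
The plan is to identify both sides as cyclic modules with explicit idempotent generators, and then exhibit the isomorphism by a single distinguished diagram in $\bra_{\si_j}$. By definition of $\mathbb{S}^\bla$, the left-hand module $\mathbb{S}^\bla(P_{\dots;\Bi_j;\emptyset;\dots})$ is a cyclic projective $\alg^\bla \cdot e_L$, where $e_L$ is the straight-line idempotent diagram with the black strands of $\Bi_j$ sitting in the gap immediately to the right of the $j$-th red strand of $\bla$; similarly $\mathbb{S}^\bla(P_{\dots;\emptyset;\Bi_j;\dots})=\alg^{\si_j\cdot\bla}\cdot e_R$, where in $e_R$ the same black strands now sit immediately to the right of the $j$-th red strand of $\si_j\cdot\bla$ (whose label is $\la_{j+1}$). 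Consequently
\[ \mathbb{B}_j\bigl(\mathbb{S}^\bla(P_{\dots;\Bi_j;\emptyset;\dots})\bigr)\;\cong\;(\alg^\bla e_L)\otimes_{\alg^\bla}\bra_{\si_j}\;\cong\;e_L\cdot\bra_{\si_j},\]
and I am looking for an isomorphism of this with $\alg^{\si_j\cdot\bla}e_R$ after the stated shift.

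The map I would write down is the one sending $e_R$ to the minimal diagram $d\in e_L\,\bra_{\si_j}\, e_R$ obtained by crossing red $\la_j$ over red $\la_{j+1}$ once and letting the $\Bi_j$ black strands run straight down past the crossing (so that the strands which were between red $\la_j$ and red $\la_{j+1}$ at the top end up between red $\la_{j+1}$ and red $\la_j$ at the bottom, matching $e_R$). This is geometrically the unique ``straight'' way to interpolate between $e_L$ and $e_R$ through a single red crossing, and it is closely parallel to the diagrams constructed in Proposition~\ref{minimal-basis}. Right multiplication by $d$ then yields the candidate map $\Phi\colon \alg^{\si_j\cdot\bla}e_R\bigl(\langle\la_j-\bal(j),\la_{j+1}\rangle\bigr)\to e_L\cdot\bra_{\si_j}$.

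The degree of $d$ comes from two sources. The single red/red crossing contributes $-\langle\la_j,\la_{j+1}\rangle$ by the defining convention. Each of the $|\Bi_j|$ black strands must cross exactly one red strand (namely the $\la_{j+1}$ strand, which sweeps from the right of the black strands at top to their left at bottom), contributing $\langle\alpha_{(\Bi_j)_k},\la_{j+1}\rangle$ per strand from the black/red crossing relation. Summing gives $\deg d = -\langle\la_j,\la_{j+1}\rangle+\langle\bal(j),\la_{j+1}\rangle=-\langle\la_j-\bal(j),\la_{j+1}\rangle$, so $\Phi$ is homogeneous of degree zero.

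To finish, I would verify that $\Phi$ is bijective by invoking Proposition~\ref{minimal-basis}. At the $\tilde{}$-level, $e_L\tilde{\bra}_{\si_j}e_R$ has a basis given by the $\psi_\Bw$ diagrams compatible with the idempotents at top and bottom, decorated with dot monomials on black strands. Since $\si_j$ has a unique reduced lift and the idempotents $e_L,e_R$ pin down the endpoints of each black strand, every such basis vector is visibly $d$ multiplied on the right by a corresponding basis element of $\tilde\alg^{\si_j\cdot\bla} e_R$ (right multiplication by a dot on a black strand past the crossing does not change the red permutation, so the argument is essentially identical to the polynomial-representation computation in the proof of \cite[Theorem~2.4]{CTP}). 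The passage from $\tilde\bra$ to $\bra$ is handled exactly as in Proposition~\ref{bra-commute}: the preimage under $\Phi$ of any diagram containing a violating strand already contains one, so the bijection descends. The main obstacle here is the bookkeeping: one must verify that the ``natural'' choice of $d$ really is the right choice--in particular, that pulling the $\Bi_j$ strands to the opposite side of the crossing would force the appearance of correction terms by the braid relations, and that the sign/shift conventions used for black/red crossings match those used in the definition of the grading on $\bra_{\si_j}$.
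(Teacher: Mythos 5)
There is a genuine gap: your proof treats $\mathbb{S}^\bla(P_{\dots;\Bi_j;\emptyset;\dots})$ as the cyclic projective $\alg^\bla\cdot e_L$, but $\mathbb{S}^\bla$ is the \emph{standardization} functor, so applying it to a projective in the product category produces a standard module, which is a proper \emph{quotient} of $\alg^\bla e_L$ in general (and likewise for the intended target $\mathbb{S}^{\si_j\cdot\bla}(P_{\dots;\emptyset;\Bi_j;\dots})$, which is not $\alg^{\si_j\cdot\bla}e_R$). This matters because $\mathbb{B}_j=-\overset{L}\otimes\bra_{\si_j}$ is a \emph{derived} tensor product: for a non-projective source module the higher $\Tor$ groups could be nonzero, and your chain of isomorphisms $\mathbb{B}_j(\mathbb{S}^\bla(\cdots))\cong(\alg^\bla e_L)\otimes_{\alg^\bla}\bra_{\si_j}\cong e_L\cdot\bra_{\si_j}$ is simply not licensed. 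The paper's proof addresses this first, before anything else, by reducing to the case of only two red strands, where the standard module \emph{is} projective (since when all black strands sit in the leftmost block, the standardly violating relation coincides with the already-imposed violating-strand relation), and that is what forces the vanishing of the higher terms of the tensor product. Your proposal simply skips this step.

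Once one knows the functor produces a single module (rather than a genuine complex), your constructive part is essentially the paper's: the distinguished diagram $d$ is exactly Figure \ref{fig:cross-gen}, and your degree count $-\langle\la_j,\la_{j+1}\rangle+\langle\bal(j),\la_{j+1}\rangle$ correctly reproduces the stated shift. But that part is the easy half; the content of the proposition lies in the $\Tor$-vanishing you have omitted. To repair the argument you would need to either import the reduction-to-two-strands trick, or otherwise prove directly that $\Tor^{>0}_{\alg^\bla}(S^\kappa_\Bi,\bra_{\si_j})=0$ for the relevant standard.
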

\begin{proof}
We can reduce to the case where the crossing is of the only two
strands.  In this case, $\mathbb{S}^{\bla}(P_{\Bi_j;\emptyset})$ is
projective, so
$\mathbb{B}_j\left(\mathbb{S}^{\bla}(P_{\dots;\Bi_j;\emptyset;\dots})\right)$
is the naive tensor product of these modules.  The isomorphism to
$\mathbb{S}^{\bla}(P_{\dots;\emptyset;\Bi_j;\dots})\Big(\big\langle\la_j-\bal(j),\la_{j+1}\big\rangle\Big)$
is the single diagram shown in Figure \ref{fig:cross-gen}.
  \begin{figure}[ht]
   \centering
    \begin{tikzpicture}
      [very thick,xscale=1.7,yscale=1.4]
      \usetikzlibrary{decorations.pathreplacing}
      \node at  (-1.5,0){$\cdots$}; 
      \node at (2.5,0){$\cdots$}; 
      \draw[wei] (1.2,-1) -- (-1,1) node[at start,below]{$\la_{j+1}$} node[at
      end,above]{$\la_{j+1}$} node[pos=.55,fill=white,circle]{}; 
      \draw[wei]    (-1.2,-1) -- (1,1) node[at start,below]{$\la_{j}$} node[at
      end,above]{$\la_{j}$}; \draw (.5,-1) to[in=-110, out=70]
      (2.4,1); \draw
      (-.4,-1) to[in=-110, out=70] (1.7,1);  \node at
      (.3,-.7){$\cdots$};
\node at      (1.9,.7){$\cdots$};
    \end{tikzpicture}
  \caption{The generator of $\mathbb{B}_j\left(\mathbb{S}^{\bla}(P_{\dots;\Bi_j;\emptyset;\dots})\right)$.}\label{fig:cross-gen}
\end{figure}
\end{proof}

\begin{cor}\label{br-cat}
  The action of $\mathbb{B}_\si$ categorifies the action of the
  braiding.
\end{cor}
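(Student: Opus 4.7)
The plan is to reduce to a single positive crossing $\si_k$, exploit the $U_q(\fg)$-equivariance from Proposition \ref{bra-commute}, and use the explicit image in Proposition \ref{sta-braid} to match the two maps on a $U_q$-generating subspace of $V_\bla$.

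Since $\mathbb{B}_\si$ is defined by composition and the braiding on the ribbon category of $U_q(\fg)$-modules is built from the simple $\sigma_{V_{\la_k}, V_{\la_{k+1}}}$'s in exactly the same pattern, it suffices to handle $\si = \si_k$. Before talking about Grothendieck groups, one must verify that $\mathbb{B}_{\si_k}$ actually restricts to a functor $\cat^\bla \to \cat^{\si_k \cdot \bla}$. For this, observe that the projectives of the form $\mathbb{S}^\bla(P_{\dots;\Bi_k;\emptyset;\dots})$, as $\Bi_k$ varies and one permits arbitrary content on the strands away from position $k$, generate $\cat^\bla$ under shifts, distinguished triangles, and $q$-adic completion; Proposition \ref{sta-braid} then shows each is sent to a projective object in $\cat^{\si_k\cdot \bla}$, so the restriction exists.

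On Grothendieck groups, Proposition \ref{bra-commute} forces $K_0(\mathbb{B}_{\si_k}) \colon V_\bla \to V_{\si_k \cdot \bla}$ to intertwine the $U_q(\fg)$-action, while $\tau \circ R$ is $U_q$-linear by construction. Hence it is enough to check equality on a subset generating $V_\bla$ as a $U_q(\fg)$-module. I take the vectors
\[
F_{\Bi_1} v_{\la_1} \otimes \cdots \otimes F_{\Bi_k} v_{\la_k} \otimes v_{\la_{k+1}} \otimes F_{\Bi_{k+2}} v_{\la_{k+2}} \otimes \cdots \otimes F_{\Bi_\ell} v_{\la_\ell},
\]
i.e.\ those with the $(k+1)$st tensor factor at its highest weight. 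A short triangular induction using $\Delta(F_i) = F_i \otimes \tilde K_{-i} + 1 \otimes F_i$ shows that these span $V_\bla$ as a $U_q$-module: at each step one can solve for the term with $F_i$ landing on the $(k+1)$st factor in terms of vectors already in the span.

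For such a vector, every non-identity summand of Lusztig's quasi-$R$-matrix $\Theta^{-1}$ carries a positive root vector on the right factor that annihilates $v_{\la_{k+1}}$, so $\Theta^{-1}$ acts as the identity; the diagonal correction $A$ multiplies by $q^{\langle \la_k - \bal(k), \la_{k+1} \rangle}$, after which $\tau$ swaps the $k$th and $(k+1)$st tensor factors. The outcome coincides exactly with the class of $\mathbb{S}^\bla(P_{\dots;\emptyset;\Bi_k;\dots})\bigl(\langle \la_k - \bal(k), \la_{k+1}\rangle\bigr)$ furnished by Proposition \ref{sta-braid}. The principal obstacle is bookkeeping: reconciling the opposite coproduct convention forced on us (versus \cite{CP}), keeping the sign of the grading shift consistent, and confirming the preservation of $\cat^\bla$ so that a Grothendieck-group map exists to compare in the first place.
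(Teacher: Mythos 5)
Your main argument is essentially the paper's own proof: use Proposition \ref{bra-commute} to see that the induced map on Grothendieck groups commutes with the lowering operators, reduce to pure tensors with a highest weight vector in the $(k+1)$st slot (which generate $V_\bla$ over $U_q^-(\fg)$ and are spanned by the classes of the $\mathbb{S}^\bla(P_{\dots;\Bi_k;\emptyset;\dots})$), and match the grading shift of Proposition \ref{sta-braid} against $q^{\langle\wt(v_k),\la_{k+1}\rangle}$, which is what $\tau\circ A\Theta^{-1}$ does on such vectors since the quasi-R-matrix acts trivially when the right factor is highest weight. That part is fine.

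The one step that does not hold up is your justification that $\mathbb{B}_{\si_k}$ restricts to $\cat^\bla\to\cat^{\si_k\cdot\bla}$. The objects $\mathbb{S}^\bla(P_{\dots;\Bi_k;\emptyset;\dots})$ do \emph{not} generate $\cat^\bla$ under shifts, triangles and completion: their classes span only the subspace of $V_\bla$ consisting of pure tensors whose $(k+1)$st factor is highest weight, which is a proper subspace whenever $\la_{k+1}\neq 0$ (it generates $V_\bla$ only after applying the $F_i$, i.e.\ after closing under the functors $\fF_i$ as well). So this generation claim cannot be used to conclude that arbitrary projectives land in $\cat^{\si_k\cdot\bla}$. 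The paper does not attempt to settle preservation of $\cat^\bla$ at this stage; it is established separately in Lemma \ref{pro-sta} by an induction on the length of the braid (the statements $p_n$, $f_n$, $s_n$ about projectives going to modules with standard filtrations and standards having vanishing higher Tor), and that lemma in turn uses the present corollary only through a dimension count at $q=1$. If you want to include the preservation statement in your write-up, you should either cite that induction or reproduce it; the shortcut via generation by the standardized projectives with $\emptyset$ in slot $k+1$ is not available.
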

\begin{proof}
  By Proposition \ref{bra-commute},
  the induced action on $V_\bla$, which we denote by $R_\si$,
  commutes with the action of $U_q^-(\fg)$.  Thus we need only calculate
  the action of $R_\si$ on a pure tensor of a weight vectors with a
  {\em highest} weight vector $v_h$ in the $j+1$st place, since these generate $V_\bla$ as a $U_q^-(\fg)$ -representation. 

The space of such vectors is spanned by the classes of the form $\mathbb{S}^{\bla}(P_{\dots;\Bi_j;\emptyset;\dots})$.
Thus, Proposition \ref{sta-braid} implies that
  \begin{equation*}
R_\si(v_1\otimes\cdots\otimes v_j\otimes v_h\otimes\cdots\otimes v_\ell)=q^{\langle\wt(v_j),\la_{j+1}\rangle}v_1\otimes\cdots\otimes v_h\otimes v_j\otimes\cdots\otimes v_\ell
  \end{equation*}
which is exactly what the braiding does to vectors of this form as we
noted in the Notation section.
Since vectors of this form generate the representation, there is a unique endomorphism with this behavior, and $R_\si$ is the braiding.
\end{proof}

\begin{lemma}\label{pro-sta}
  If $\si=\si_{i_1}\cdots \si_{i_m}$ is a positive braid, then the functor  $\mathbb{B}_\si=\mathbb{B}_{\si_{i_1}}\cdots \mathbb{B}_{\si_{i_m}}$
  is independent of the choice of word in the generators (up to canonical
  isomorphism).

  If $\si$ is a minimal lift of a
  permutation, then for any projective $P^\kappa_\Bi$, the module $\mathbb{B}_\si(P^\kappa_\Bi)$ has a standard
  filtration and $\mathbb{B}_\si(S^\kappa_\Bi)$ is a module (that is,
  $\Tor_{\alg^\bla}^{>0}(S^\kappa_\Bi,\bra_\si)=0$).
\end{lemma}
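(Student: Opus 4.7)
The plan is to prove the three assertions in order, with (iii) a formal consequence of (ii).

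For part (i), any two factorizations of a positive braid $\si$ as a product of generators $\si_k$ differ by a finite sequence of far-commutations ($\si_i\si_j=\si_j\si_i$ for $|i-j|\geq 2$) and braid relations ($\si_i\si_{i+1}\si_i=\si_{i+1}\si_i\si_{i+1}$). The far-commutation is immediate, since the corresponding crossings act on disjoint pairs of red strands and the two tensor orderings are related by planar isotopy. For the braid relation, both $\bra_{\si_i}\otimes_{\alg}\bra_{\si_{i+1}}\otimes_{\alg}\bra_{\si_i}$ and $\bra_{\si_{i+1}}\otimes_{\alg}\bra_{\si_i}\otimes_{\alg}\bra_{\si_{i+1}}$ admit natural stacking maps to the minimal-lift bimodule $\bra_{\si_i\si_{i+1}\si_i}$, and Proposition \ref{minimal-basis} shows both stacking maps are isomorphisms of bimodules, producing a canonical identification of the two compositions.

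For part (ii) I would handle a single crossing $\si_k$ first and then induct on the length of the reduced expression. Because $\cata^\bla$ is a highest-weight category in the sense of \cite{CTP}, every projective $P^\kappa_\Bi$ admits a standard filtration whose subquotients are standards $S^{\kappa'}_{\Bj}$, and each standard is itself a standardization $\mathbb{S}^\bla$ of a tensor of KLR modules supported at the individual red strands. It therefore suffices to prove, for each standard $S$, that $\Tor^{>0}_{\alg^\bla}(S,\bra_{\si_k})=0$ and that $S\otimes_{\alg^\bla}\bra_{\si_k}$ is again a standard up to the grading shift predicted by Proposition \ref{sta-braid}. Both facts are obtained by reorganizing the basis of Proposition \ref{minimal-basis} so as to exhibit $\bra_{\si_k}$ as a left $\alg^\bla$-module carrying a filtration whose subquotients are standardizations of the sort handled in Proposition \ref{sta-braid}, reducing the calculation to the case where one of the two adjacent black-strand populations next to the crossing is empty. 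For a general minimal lift $\si=\si_{i_1}\cdots\si_{i_m}$, Proposition \ref{minimal-basis} gives a canonical identification $\bra_\si\cong\bra_{\si_{i_1}}\otimes_\alg\cdots\otimes_\alg\bra_{\si_{i_m}}$, and induction on $m$ propagates the single-crossing case: at each stage we apply $\mathbb{B}_{\si_{i_j}}$ to a module with standard filtration, whose standard subquotients have vanishing higher Tor by the inductive hypothesis.

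Part (iii) is then formal. Each $\mathbb{B}_\si(P^\kappa_\Bi)$ is concentrated in homological degree $0$ with a finite standard filtration, and standards have finite projective resolutions in the highest-weight category $\cata^{\si\cdot\bla}$, so applying $\mathbb{B}_\si$ termwise to a projective complex representing an object of $\cat^\bla$ produces a complex whose cohomology lies in $\cat^{\si\cdot\bla}$ with the required boundedness. The main obstacle is the second half of part (ii): converting the combinatorial basis from Proposition \ref{minimal-basis} into the homological statement that every standard is acyclic for $-\otimes_{\alg^\bla}\bra_{\si_k}$, and tracking the grading shifts so that the subquotients of the image filtration match precisely the standards indexed by the braid-permuted labels.
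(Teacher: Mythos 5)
Your outline has several of the right ingredients, but there are three concrete gaps, and the logical architecture is backwards relative to what the argument actually requires.

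First, part (i) cannot be established independently of part (ii). The functor $\mathbb{B}_\si$ is by definition a \emph{derived} tensor product, so a composition $\mathbb{B}_{\si_i}\mathbb{B}_{\si_{i+1}}\mathbb{B}_{\si_i}$ computes $-\overset{L}\otimes\bigl(\bra_{\si_i}\overset{L}\otimes_\alg\bra_{\si_{i+1}}\overset{L}\otimes_\alg\bra_{\si_i}\bigr)$. The naive bimodule isomorphism you extract from Proposition~\ref{minimal-basis} identifies the underived triple products on each side, but says nothing unless you already know the derived tensor product collapses to the naive one, i.e.\ unless the relevant Tor's vanish. This is precisely the content of part (ii). The paper handles this circularity by interleaving three statements ($\mathbb{B}_\si$ sends projectives to modules; sends projectives to $\Delta$-filtered modules and is word-independent; sends standards to modules) in a single induction on braid length, cycling $p_n\to f_n\to s_n\to p_{n+1}$. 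Your plan of settling (i) first, then (ii), does not close.

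Second, your reduction of (ii) to ``$S\otimes_{\alg^\bla}\bra_{\si_k}$ is again a standard up to the shift predicted by Proposition~\ref{sta-braid}'' is false as stated. Proposition~\ref{sta-braid} is the very special case $P_{\dots;\Bi_j;\emptyset;\dots}$ where the $(j{+}1)$st slot is empty; for a general standard $S^\kappa_\Bi$, the braiding produces a module with a filtration by standards, not a single standard. And you cannot simply ``reduce to the case where one adjacent black-strand population is empty.''

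Third, you omit the two devices that actually make the paper's induction work. To see that $\bra_\si$ has a standard filtration as a \emph{left} module (from which the Tor-vanishing against standards follows by the semi-orthogonality $\Tor^{>0}(S^\kappa_\Bi,\dot S^{\kappa'}_{\Bi'})=0$), the paper uses the dot anti-automorphism: $\dot\bra_\si\cong\bra_{\bar\si}$ where $\bar\si$ is the reversed word, turning the right-module standard filtration already established on $\mathbb{B}_{\bar\si}$ of projectives into a left-module one for $\bra_\si$. And to verify that the explicitly constructed surjections $S^{\kappa_\phi}_{\Bi_\phi}\twoheadrightarrow D_{\leq\phi}/D_{<\phi}$ are actually isomorphisms, the paper counts dimensions using the Euler form $\langle-,-\rangle_1$ at $q=1$ and the decategorified fact that $\mathbb{B}_\si$ categorifies the permutation map. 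Without this dimension count, ``reorganizing the basis of Proposition~\ref{minimal-basis}'' only produces surjections, not the claimed filtration.
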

\begin{proof}
Note that the independence of choice of word for all positive braids is equivalent to that for minimal lifts, since the braid relations only involve minimal lifts.

  Thus, we need only prove the statements of the theorem for a minimal
  lift $\si$. We will prove these simultaneously by induction on the length of $\si$.  This
  induction is slightly subtle, so rather than attempt each step in
  one go, we break the theorem into 3 statements, and induct around a
  triangle. Consider the three statements (for each positive integer
  $n$):
  \begin{enumerate}
  \item[$p_n:$] For all $\si$ with $\ell(\si)=n$, $\mathbb{B}_\si$
    sends projectives to modules.
  \item[$f_n:$] For all $\si$ with $\ell(\si)=n$, $\mathbb{B}_\si$
    sends projectives to objects with standard filtrations, and is independent of reduced expression.
  \item[$s_n:$] For all $\si$ with $\ell(\si)=n$, $\mathbb{B}_\si$
    sends standards to modules.
  \end{enumerate}
  Our induction proceeds by showing
  \begin{equation*}
    \cdots  \to  p_n \to f_n \rightarrow s_n \rightarrow p_{n+1} \rightarrow \cdots  
  \end{equation*}

  These are all obviously true for $\si=1$, so this covers the base of
  our induction.

  $f_n\rightarrow s_n$: Consider $\Tor^i(S^\kappa_{\Bi},\dot
  S^{\kappa'}_{\Bi'})$.  By symmetry, we may assume that $(\kappa,\Bi)\nleq (\kappa',\Bi')$ in which case $S^\kappa_\Bi$ has a projective resolution where all higher terms are killed by tensor product with $\dot S^{\kappa'}_{\Bi'}$, since they are projective covers of simples which do not appear as composition factors in $S^{\kappa'}_{\Bi'}$.   Thus, we have $\Tor^i(S^\kappa_{\Bi},\dot
  S^{\kappa'}_{\Bi'})=0$.

  Let $\bar\si$ be a reduced positive braid for the inverse of $\si$.
  Then if we let $\dot{\bra_\si}$ be $\bra_\si$ with the left and
  right actions reversed by the dot-anti-automorphism, then
  $\dot{\bra_\si}\cong\bra_{\bar\si}$.

  By $f_n$, the bimodule $\bra_{\bar\si}$ has a standard filtration as
  a right module, so $\bra_{\si}$ has a standard filtration as a left
  module.  Thus, we have $\Tor^i(S^\kappa_{\Bi},\bra^{\la'}_{\la})$
  for $i>0$ and the same holds for any module with a standard
  filtration.

  $s_n\rightarrow p_{n+1}$: We can write
  $\mathbb{B}_\si=\mathbb{B}_{\si'}\mathbb{B}_{\si''}$ where
  $\si',\si''$ are of length $<n+1$. Thus, by assumption,
  $\mathbb{B}_{\si''}$ sends projectives to standard filtered modules,
  and $\mathbb{B}_{\si'}$ sends standards to modules. The result
  follows.

  $p_n\rightarrow f_n$: Since $\mathbb{B}_\si$ sends projectives to
  modules, the bimodule $\bra_\si$ is the naive tensor product of
  those corresponding to individual crossings.  

  Now, we construct the standard filtration on $D=\mathbb{B}_\si
  P^{\kappa}_{\Bi}$. 
Let $\Phi$ be the
  parameter set of the standard filtration on the projective as defined in \cite[\S 3.4]{WebCTP}.   We compose each of these permutations with the permutation of the blocks of black strands between two consecutive red strands according to the action of $\si$ on the red strands at the left of each block.  As before, we can place a partial
  order on these by considering the preorder on the labeling of the tops
  of the strands, and then within each labeling using the Bruhat
  order.  The element $y_\phi$ which we attach to $\phi\in \Phi$ is
  again the diagram which permutes the red and black strands according
  to a reduced word of the permutation.

  We construct a filtration $D_{\leq \phi},D_{<\phi}$
  out of these elements and partial order; while the element $y_\phi$ involves a choice of reduced word, this filtration is independent of it.  Multiplication by $y_\phi$ gives a surjection $d:S^{\kappa_\phi}_{\Bi_\phi}\twoheadrightarrow D_{\leq \phi}/D_{<\phi}$, which we aim to show is an isomorphism.

Since $\mathbb{B}_{\si}$ categorifies the braiding, when $q$ is specialized to 1, it categorifies the permutation map $V_\bla\to V_{\si\cdot \bla}$, and is thus an isometry for $\langle-,-\rangle_1$.  In particular, $$\dim \mathbb{B}_\si=\langle[\alg^{\si\cdot \bla}],\si\cdot [\alg^\bla]\rangle_1=\sum_{\phi\in\Phi} \langle[\alg^{\si\cdot \bla}],[S^{\kappa_\phi}_{\Bi_\phi}]\rangle_1=\sum_{\phi\in\Phi}\dim S^{\kappa_\phi}_{\Bi_\phi}$$
which shows that all the maps $S^{\kappa_\phi}_{\Bi_\phi}\twoheadrightarrow D_{\leq \phi}/D_{<\phi}$ must be isomorphisms.
\end{proof}

\begin{lemma}
  The functor $\mathbb{B}_\si$ sends $\cat^\bla$ to $\cat^{\si\cdot\bla}$.
\end{lemma}
\begin{proof}
   From Lemma \ref{pro-sta}, we find that $\bra_{\si_i}$ considered as a left module (which is the same as $\dot \bra_{\si_i}$) has a finite length free resolution. So any projective module $M$ is sent to a finite length complex; since there are only finitely many indecomposable projectives, the amount which this can decrease the lowest degree is bounded below.  Thus, a complex of projectives in $C^{\uparrow}(\cata^\bla)$ is sent to another collection of projectives in $C^{\uparrow}(\cata^{\si\cdot \bla})$.
\end{proof}

Let $\tau$ be a positive lift of the longest element.  This is
essentially a half twist, but with the blackboard framing, not the one
with ribbon half-twists as well.

Recall that a module $M$ over a standardly stratified algebra is called {\bf tilting} if $M$  has a standard filtration, and $M^\star$ has a filtration by standardizations (which is weaker than a filtration by standards, since those are standardizations of projectives).

\begin{thm}\label{tilting}
The modules $\mathbb{B}_\tau P_{\Bi}^\kappa$ are tilting, and every indecomposable tilting module is a summand of these tiltings.
\end{thm}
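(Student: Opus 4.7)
The plan is to verify the two defining properties of a tilting module for $\mathbb{B}_\tau P_\Bi^\kappa$, and then to deduce the exhaustion claim from a derived-equivalence argument.

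For the standard filtration, the braid $\tau$ is by construction a positive minimal lift of a permutation (namely the longest element), so Lemma \ref{pro-sta} applies verbatim and tells us that $\mathbb{B}_\tau P_\Bi^\kappa$ has a filtration by standards. This handles the first of the two tilting conditions.

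For the second condition, I would recast it as an Ext-vanishing statement: a module with a standard filtration is tilting if and only if $\Ext^{>0}(M,N) = 0$ for every $N$ with a standardization filtration. The braid group action from Theorem \ref{categor}(i) makes $\mathbb{B}_\tau$ an auto-equivalence of derived categories, with quasi-inverse $\mathbb{B}_{\tau^{-1}}$ corresponding to the inverse braid; consequently
\[
\Ext^i(\mathbb{B}_\tau P_\Bi^\kappa, N) \cong \Ext^i(P_\Bi^\kappa,\, \mathbb{B}_{\tau^{-1}} N),
\]
and projectivity of $P_\Bi^\kappa$ kills the right-hand side for $i>0$ provided $\mathbb{B}_{\tau^{-1}} N$ is concentrated in cohomological degree $0$. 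That last fact is the mirror of Lemma \ref{pro-sta}: one shows that $\mathbb{B}_{\tau^{-1}}$ sends standardization-filtered modules to modules, by running the triangular induction $p_n \to f_n \to s_n$ with standards/costandards and projectives/injectives interchanged, and using the isomorphism $\dot{\bra}_\si \cong \bra_{\bar\si}$ to trade positive lifts for negative ones. The main technical obstacle I anticipate is cleanly setting up the negative-crossing functor $\mathbb{B}_{\tau^{-1}}$ (either as tensor with a bimodule for the inverse braid, or equivalently as $\RHom(\bra_\tau,-)$) and then checking this mirror version of Lemma \ref{pro-sta}; everything else is formal.

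For the exhaustion claim, since $\mathbb{B}_\tau$ is an equivalence of derived categories it preserves indecomposability, so $\mathbb{B}_\tau P_\Bi^\kappa$ is indecomposable tilting whenever $P_\Bi^\kappa$ is indecomposable projective. The indecomposable projectives and the indecomposable tiltings are, up to grading shift, both indexed by the simples of $\alg^{\tau\cdot\bla}$ (the usual parametrization in a standardly stratified algebra). Thus $\mathbb{B}_\tau$ induces a bijection between these two indexing sets, and every indecomposable tilting arises (up to grading shift) as some $\mathbb{B}_\tau P_\Bi^\kappa$, giving the desired summand statement.
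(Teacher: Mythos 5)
Your approach runs into a circularity that is fatal in the paper's logical structure. You invoke Theorem~\ref{categor}(i) (the braid groupoid action) to assert that $\mathbb{B}_\tau$ is a derived auto-equivalence with quasi-inverse $\mathbb{B}_{\tau^{-1}}$, and you use this both to transfer the Ext-vanishing via $\Ext^i(\mathbb{B}_\tau P_\Bi^\kappa, N)\cong \Ext^i(P_\Bi^\kappa, \mathbb{B}_{\tau^{-1}}N)$ and, in the exhaustion step, to argue that $\mathbb{B}_\tau$ preserves indecomposability. But in this paper the equivalence is Theorem~\ref{braid-act}, which comes \emph{after} Theorem~\ref{tilting} and whose proof begins by observing that ``higher Ext's between tilting modules always vanish,'' i.e.\ it \emph{uses} the fact that $\mathbb{B}_\tau P_\Bi^\kappa$ is tilting. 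So you cannot quote the equivalence here without first reproving it by some other means.

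The paper's proof avoids this by establishing the second tilting condition directly: it exhibits $\mathbb{B}_\tau P_\Bi^\kappa$ as self-dual via an explicit pairing built from the $\theta_\kappa$-inclusion $P_\Bi^\kappa\hookrightarrow P_\Bi^0$ of \cite[Lemma 3.19]{CTP}, the observation that $\mathbb{B}_\tau P_\Bi^0\cong P_\Bi^0$, and the diagrammatic basis of Proposition~\ref{minimal-basis} to produce both an inclusion and a surjection $P_\Bi^0\to \mathbb{B}_\tau P_\Bi^\kappa$ with equal dimensions. Self-duality together with the standard filtration from Lemma~\ref{pro-sta} immediately gives the tilting property, with no appeal to $\mathbb{B}_\tau$ being an equivalence. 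For exhaustion, the paper argues combinatorially that $\tau$ reverses the preorder on standards, so that $S^{\kappa'}_{\Bi'}$ (for $(\kappa',\Bi')$ the block-reversal) sits at the top of the standard filtration and the associated indecomposable tilting occurs as a summand; running over stringy $(\kappa,\Bi)$ hits every indecomposable tilting. Your first step (the standard filtration via Lemma~\ref{pro-sta}) agrees with the paper; the rest of your argument would need the equivalence to be established independently before it could stand, which is a genuine gap rather than just a ``technical obstacle'' of constructing $\mathbb{B}_{\tau^{-1}}$.
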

\begin{proof}
We show first that $\mathbb{B}_\tau P_{\Bi}^\kappa$ is self-dual.  The pairing that achieves this duality is a simple variant on that described in \cite[\S 1.3]{WebCTP}, where as before, we form a closed diagram and evaluate its constant term. This pairing is pictorially represented in Figure \ref{pairing}.

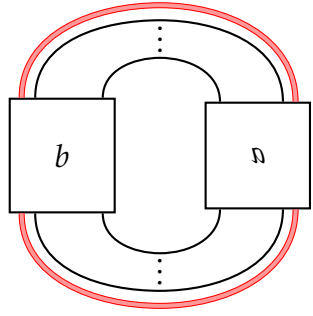
\begin{figure}
\begin{tikzpicture}[thick]
\node (a) [draw,inner sep=.6cm,xscale=-1] at (1.3,0) {$a$};
\node (b) [draw,inner sep=.6cm] at (-1.3,0) {$b$};
\draw[wei] (b.125) to[out=90,in=180] (0,2) to[out=0,in=90] (a.125);
\draw (b.115) to[out=90,in=180] (0,1.8) to[out=0,in=90] (a.115);
\draw (b.55) to[out=90,in=180] (0,1.3) to[out=0,in=90] (a.55);
\draw[wei] (b.-125) to[out=-90,in=180] (0,-2) to[out=0,in=-90] (a.-125);
\draw (b.-115) to[out=-90,in=180] (0,-1.8) to[out=0,in=-90] (a.-115);
\draw (b.-55) to[out=-90,in=180] (0,-1.3) to[out=0,in=-90] (a.-55);
\node at (0,1.65){$\vdots$};
\node[scale=-1] at (0,-1.65){$\vdots$};
\end{tikzpicture}
\caption{The pairing}
\label{pairing}
\end{figure}

The non-degeneracy of this pairing follows from that on $P^0_{\Bi}$.
In \cite[Lemma 3.20]{WebCTP}, we have shown that $P^{\kappa}_{\Bi}$ has an embedding into
$P^{\kappa}_{\Bi}$ into $P^{0}_{\Bi}$ consistent with the standard
filtration, given by left multiplication by the element
$\theta_\kappa$.  By the Tor-vanishing of $\bra_\tau$ paired with any
module with a standard filtration, this map induces an inclusion
$\mathbb{B}_\tau P^{\kappa}_{\Bi}\to P^0_{\Bi}$.

By Proposition \ref{minimal-basis}, any
non-zero diagram in $\mathbb{B}_\tau P^{\kappa}_{\Bi}$ can be drawn
with a section in the middle where all black strands are right of all
red strands.
Thus, the map $P^0_{\Bi}\to P^{\kappa}_{\Bi}$ given by multiplication by
$\dot{\theta_\kappa}$ is not surjective, but the induced
map $P^0_{\Bi}\to \mathbb{B}_\tau P^{\kappa}_{\Bi}$ is.

The pairing of Figure \ref{pairing} is that induced by these maps. This shows
immediately that the perpendicular to the image of the inclusion
contains the kernels of the surjection.  Since these have the same
dimension, they coincide and the pairing is non-degenerate.  Thus,
$\mathbb{B}_\tau P^{\kappa}_{\Bi}$ is self-dual.

By Lemma \ref{pro-sta}, $\mathbb{B}_\tau P^\kappa_\Bi$ has a
filtration by standards for any indecomposable projective;  the
element $\tau$ reverses the pre-order on standards, every standard
which appears is below $(\kappa',\Bi')$ is the sequence obtained from
reversing the blocks of
$(\kappa,\Bi)$, so if $(\kappa,\Bi)$ (and thus $(\kappa',\Bi)$)
is stringy, the tilting whose highest composition factor is the head
of $S^{\kappa'}_{\Bi'}$.  Thus, any tilting is a summand of
$\mathbb{B}_\tau$ applied to a projective.
\excise{
and also, by $\mathbb{B}_\tau S_{\Bi}^\kappa$, which we intend to show are dual standardizations.  Given $M\in \cata^{\la_1;\dots;\la_\ell}$, we have the reverse $M^\tau\in \cata^{\la_\ell;\dots;\la_1}$.  For simplicity, in this proof we will write $\mathbb{S}$ for $\mathbb{S}^\bla$ or $\mathbb{S}^{\la_\ell;\dots;\la_1}$.

Consider $\mathbb{B}_\tau \mathbb{S}M^\tau$.  We have a pairing $\mathbb{B}_\tau \mathbb{S}M^\tau\otimes_{\K} \mathbb{S}(M^\star)\to\K$ defined so that the action of any element of $\alg^\bla$ is self-adjoint under the $\dot{}$-involution, and the normalization $\langle mx_\tau,m'\rangle=\langle m^\tau ,m'\rangle_{M}$ where $x_\tau$ is the element which permutes the groups of black strands along with the red strands to their left (i.e. the unique diagram for any longest word of $\tau$ where each black strand crosses every black and red strand, except the blacks in its group and the red immediately to its left).  If $x$ is a diagram with any other pattern of red/black crossing, $\langle mx_\tau,m'\rangle=0$.  This is well-defined since no diagram we have given a non-zero value lies below any with a 0 value in Bruhat order, and any violating strand in $\bra_\tau$ can be pulled upward to become a standardly violating strand in $\mathbb{S}M^\tau$.

Choosing $v\in\mathbb{B}_\tau \mathbb{S}M^\tau$, we can rewrite the Bruhat longest term in an expression of $v$ as the product of a non-zero element $g$ of $M^tau$ and then an element $x$ of $\bra_\tau$ which never crosses black strands which start in the same group (as usual, if not, we can write $v$ in terms of Bruhat shorter diagrams).  In this diagram in $\bra_\tau$, we can see that every strand must pass to the far right, and move back to the left, but not far enough to cross the red strand that began to its left.  Let $g'\in M^\star$ be such that $\langle g^\tau,g'\rangle\neq 0$.  Let $x'$ be a diagram which completes one of the Bruhat maximal terms of $x$ to $x_\tau$, that is, which creates all black/black crossings between strands not in the same group, and all red/black crossings with red strands the black strand didn't start immediately to the right of that $x$ missed.  Then since all other terms in $xx'$ other than $x_tau$ contribute 0 to the pairing we have $$\langle gx,g'x'\rangle=\langle gx_\tau,g'\rangle=\langle g^\tau ,g'\rangle_{M}\neq 0$$ and the pairing is has no left kernel.  An essentially symmetric argument shows it has no right kernel.  Thus $\mathbb{B}_\tau \mathbb{S}M^\tau\cong \mathbb{S}(M^\star)^\star$, and so is a costandarization.  In particular $\mathbb{B}_\tau P_{\Bi}^\kappa$ is tilting.

For each indecomposable summand of a standard, there is at most one indecomposable tilting with a injection from this costandard, and all tiltings are of this form. Since $S_{\Bi^\tau}^{\kappa^\tau}$ appears as a submodule of  $\mathbb{B}_\tau P_{\Bi}^\kappa$, this shows that all indecomposable tiltings occurs as summands in these.}
\end{proof}

\begin{thm}\label{braid-act}
  The functor $\mathbb{B}_\si$ is an equivalence.
\end{thm}
\begin{proof}
  We will first show $\mathbb{B}_\tau$ is a derived equivalence.
  The higher Ext's between tilting modules always vanish so we always have that $\Ext^{>0}(\mathbb{B}_\tau
  P_{\Bi}^{\kappa},\mathbb{B}_\tau P_{\Bi'}^{\kappa'})=0$; thus we need only show that induced map between endomorphisms of these modules is an isomorphism.

It follows from Corollary \ref{br-cat} that 
\begin{equation*}
  \dim\Hom (\mathbb{B}_\tau P_{\Bi}^{\kappa},\mathbb{B}_\tau
  P_{\Bi'}^{\kappa} )=\langle[\mathbb{B}_\tau P_{\Bi}^{\kappa}],[\mathbb{B}_\tau
  P_{\Bi'}^{\kappa}] \rangle_1= \langle [P_{\Bi}^{\kappa}], [P_{\Bi'}^{\kappa'}]\rangle_1=\dim\Hom ( P_{\Bi}^{\kappa}, P_{\Bi'}^{\kappa'}).
\end{equation*}
The functor $\mathbb{B}_\tau$ induces a map $$\Hom(P_{\Bi}^{\kappa},
P_{\Bi'}^{\kappa})\longrightarrow\Hom(\mathbb{B}_\tau
P_{\Bi}^{\kappa},\mathbb{B}_\tau P_{\Bi'}^{\kappa}).$$ This is
injective, since no element of the image kills the element which pulls all
black strands to the right of all red strands below all crossings, by \cite[Lemma 3.20]{WebCTP}.
Thus, it is surjective by the dimension calculation above.  

It follows
that $\mathbb{B}_\tau$ is an equivalence.  Since it factors through
any $\mathbb{B}_{\si_k}$ on the left and right, the functor $\mathbb{B}_{\si_k}$
is an equivalence as well.
\end{proof}

\begin{lemma}
  The functors $\mathbb{B}_\sigma$ induce a strong action of the braid
  group on the categories $\bigoplus_{w\in S_\ell}\cat^{w\cdot \bla}$.
\end{lemma}
\begin{proof}
  By forthcoming work of Elias and Williamson \cite{ElCox},  it suffices to show that
  we have isomorphisms lifting the braid relations which satisfy the
  Zamolodchikov tetrahedral equations.  This will hold since we have
  defined a canonical functor not just for braid generators, but for
  all positive lifts of permutations.

By Lemma \ref{pro-sta}, the composition $\mathbb{B}_{\si_i}\circ
\mathbb{B}_{\si_{i+1}}\circ  \mathbb{B}_{\si_{i}}$ is the derived
tensor product with $\bra_{\si_i}\otimes \bra_{\si_{i+1}}\otimes
\bra_{\si_i}\cong \bra_{\si_i\si_{i+1}\si_i}$.  By Lemma
\ref{lem:independent}, we have a canonical isomorphism of this functor
with $\mathbb{B}_{\si_{i+1}}\circ
\mathbb{B}_{\si_i}\circ  \mathbb{B}_{\si_{i+1}}$.

Given any reduced expression for the longest permutation of 4
consecutive strands, we can apply these isomorphisms to go around the
loop of the Zamolodchikov tetrahedral equation, collapsing empty red
triangles in the desired sequence.  Since can use the relations to
pull all black strands out of all the polygons created by the red
strands in the permutation of 4 strands, going around this loop sends
a diagram to itself.  Thus, the braid group action we have defined is
strong.
\end{proof}

Recall that the {\bf Ringel dual} of a standardly stratified category is the category of modules over the endomorphism ring of a tilting generator, that is, the opposite category to the heart of the $t$-structure in which the tiltings are projective.  

\begin{cor}
The Ringel dual of $\cata^\bla$ is equivalent to $\cata^{\tau\cdot \bla}$.
\end{cor}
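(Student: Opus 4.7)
The plan is to use Theorems \ref{tilting} and \ref{braid-act} together; the Ringel dual is realized, by the definition recalled just above the corollary, as the module category over the endomorphism algebra of any tilting generator, so it suffices to produce a tilting generator $T$ of $\cata^\bla$ whose endomorphism algebra is (Morita equivalent to) $\alg^{\tau\cdot\bla}$.

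First I would apply Theorem \ref{tilting} with the roles of $\bla$ and $\tau\cdot\bla$ exchanged. The point to be careful about here is that although $\tau^2$ is the full-twist \emph{braid} (and so a nontrivial autoequivalence), the underlying permutation of red strands is the longest element of $S_\ell$ and is an involution; thus as sequences of weights $\tau\cdot(\tau\cdot\bla)=\bla$, and Theorem \ref{tilting} reads: the functor $\mathbb{B}_\tau\colon\cat^{\tau\cdot\bla}\to\cat^{\bla}$ sends every indecomposable projective $P^\kappa_\Bi\in\cata^{\tau\cdot\bla}$ to a tilting object of $\cata^\bla$, and every indecomposable tilting of $\cata^\bla$ appears as a summand of some such image. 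Setting $P=\bigoplus_{\kappa,\Bi}P^\kappa_\Bi$, a projective generator of $\cata^{\tau\cdot\bla}$, we conclude that $T:=\mathbb{B}_\tau P$ is a tilting generator of $\cata^\bla$.

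Next I would identify the endomorphism algebra of $T$. By Theorem \ref{braid-act} the functor $\mathbb{B}_\tau$ is a derived equivalence, and the proof of that theorem gives an explicit isomorphism
\begin{equation*}
\End_{\cata^{\tau\cdot\bla}}(P)\;\xrightarrow{\ \sim\ }\;\End_{\cata^\bla}(\mathbb{B}_\tau P)=\End_{\cata^\bla}(T):
\end{equation*}
the induced map on Homs between projectives is injective (no element in the image can kill the diagram straightening all black strands to the right of the red ones, by \cite[Lemma 3.19]{CTP}) and matches in total dimension by the Euler-form computation appearing in the proof of Theorem \ref{braid-act}, using that $\mathbb{B}_\tau$ categorifies the braiding and is therefore an isometry for $\langle-,-\rangle_1$. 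Since $\End_{\cata^{\tau\cdot\bla}}(P)\cong(\alg^{\tau\cdot\bla})^{op}$, this gives $\End_{\cata^\bla}(T)\cong(\alg^{\tau\cdot\bla})^{op}$, and hence the Ringel dual of $\cata^\bla$ is equivalent to $\alg^{\tau\cdot\bla}\mod=\cata^{\tau\cdot\bla}$.

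There is no genuinely hard step: the real work has already been done in Theorems \ref{tilting} (which provides a tilting generator via $\mathbb{B}_\tau$) and \ref{braid-act} (which provides the Hom-isomorphism). The only subtlety to flag for the reader is the involution observation $\tau\cdot(\tau\cdot\bla)=\bla$ at the level of labeled sequences, so that Theorem \ref{tilting} may legitimately be invoked in the direction $\cat^{\tau\cdot\bla}\to\cat^\bla$ to land the tiltings in $\cata^\bla$ as required.
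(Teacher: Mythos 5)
Your proposal is correct and matches what the paper clearly intends (the paper states this corollary without a separate proof, leaving it as an immediate consequence of Theorems \ref{tilting} and \ref{braid-act}). You correctly identify the two inputs — $\mathbb{B}_\tau$ carries a projective generator to a tilting generator (Theorem \ref{tilting}), and the Hom-isomorphism on projectives established inside the proof of Theorem \ref{braid-act} shows the endomorphism ring is preserved — and the observation that $\tau\cdot(\tau\cdot\bla)=\bla$ at the level of weight sequences is exactly the right bookkeeping for the direction of the functor. The only unaddressed (and unaddressed by the paper too) minutia is identifying $(\alg^{\tau\cdot\bla})^{op}\mbox{-}\modu$ with $\cata^{\tau\cdot\bla}$, which is harmless because the dot anti-automorphism of $\alg^\bla$ gives $\alg^\bla\cong(\alg^\bla)^{op}$.
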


If $C_i$ and $C'_i$ are semi-orthogonal decompositions indexed by $i\in[1,n]$ then $C_i'$ is the {\bf mutation} of $C_i$ by a permutation $\si$ if the category generated by $C_i$ for $i\leq j$ is the same as that generated by $C_{\si(i)}'$ for $i\leq j$.

\begin{prop}\label{pro:mutate}
  For any braid $\si$, $\mathbb{B}_\si$ sends the 
  semi-orthogonal decomposition of \cite[Proposition 3.21]{WebCTP} to its mutation by $\si$.
\end{prop}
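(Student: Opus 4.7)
The plan is to reduce the statement to the case of a single Coxeter generator $\si = \si_k$. Once this case is handled, the general statement follows because mutations compose, and by Theorem \ref{braid-act} the functor $\mathbb{B}_\si$ factors as a product of $\mathbb{B}_{\si_k}$'s along any reduced expression of $\si$.

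Recall that the pieces of the semi-orthogonal decomposition from [CTP, Proposition 3.20] are indexed by the set $\Phi$ of tuples $\phi$ recording how the black strands distribute among the red blocks of $\bla$; the $\phi$-th piece is generated by the standardizations $\mathbb{S}^\bla(P^\kappa_\Bi)$ whose block data match $\phi$, and the pre-order giving semi-orthogonality is combinatorially determined by the coloring data at each red block. Proposition \ref{sta-braid}, in the case where one of the two relevant blocks is empty, and Proposition \ref{bra-commute}, which lets us commute $\mathbb{B}_{\si_k}$ past the functors $\fE_i$ and $\fF_i$ to move black strands into and out of the braided blocks, together show that $\mathbb{B}_{\si_k}$ sends $\mathbb{S}^\bla(P^\kappa_\Bi)$ with block data $\phi$ to a standard-filtered object of $\cat^{\si_k\cdot\bla}$ lying in the piece indexed by the tuple $\si_k(\phi)$ obtained by swapping the $k$-th and $(k+1)$-st block contents of $\phi$. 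The same conclusion can be read off directly from the explicit description of the standard filtration on $\mathbb{B}_{\si_k} P^\kappa_\Bi$ constructed in the proof of Lemma \ref{pro-sta}, where the indexing permutations are composed with the block swap in blocks $k$ and $k+1$.

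To finish, we must promote these containments to an identification of flags: the subcategory generated by the pieces indexed by $\{\phi : \phi \le \phi_0\}$ in $\cat^\bla$ must be mapped by $\mathbb{B}_{\si_k}$ onto the subcategory generated by the $\si_k(\phi)$-pieces for $\phi \le \phi_0$ in $\cat^{\si_k\cdot\bla}$. Since $\mathbb{B}_{\si_k}$ is an equivalence by Theorem \ref{braid-act}, both flags have the same length at each level, so the forward containments furnished by the previous step are automatically equalities, provided one has verified that the image $\si_k(\{\phi:\phi\le\phi_0\})$ is a lower set for the pre-order on the index set of the target decomposition.

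The main obstacle, and the only non-formal step, is therefore this last combinatorial compatibility: one must check that the pre-order on $\Phi$ described in [CTP, \S 3.4] is equivariant under the block swap induced by $\si_k$. This comes down to the observation that the pre-order depends only on the multiset of strand labels appearing in each red block, together with the sequence of red labels, and both of these data transform covariantly when we swap the $k$-th and $(k+1)$-st blocks simultaneously on the $\bla$-side and the $\si_k\cdot\bla$-side. Once this bookkeeping is in place, the proposition follows.
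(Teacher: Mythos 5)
Your proposal follows the same route as the paper: reduce to a single Coxeter generator $\si_k$, invoke the explicit standard filtration on $\mathbb{B}_{\si_k}P^\kappa_\Bi$ from Lemma~\ref{pro-sta}, and then use the fact that $\mathbb{B}_{\si_k}$ is an equivalence to upgrade containments of flags to equalities. The paper packages the middle step more crisply as the congruence $\mathbb{B}_{\si_k} S^\kappa_\Bi \equiv \mathbb{B}_{\si_k} P^\kappa_\Bi \equiv S^{\kappa'}_{\Bi'}$ modulo standards smaller in the target pre-order, which directly delivers the flag identification and subsumes the combinatorial ``lower-set'' verification you flag at the end; your alternative route through Propositions~\ref{sta-braid} and \ref{bra-commute} alone is looser (since $\fE_i, \fF_i$ of a standard is only filtered by standards, not standard), but your fallback via Lemma~\ref{pro-sta} is exactly what the paper does, so the proof is sound.
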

\begin{proof}
First, note that we need only show this for $\si_k$.  Of course, an equivalence sends one semi-orthogonal decomposition to another.  Thus, the only point that remains to show is that $\mathbb{B}_{\si_k}(S_\bal)$ for $\bal\leq \boldsymbol{\beta}$ generates the same subcategory as $S'_\bal$ for $\si_k^{-1}(\bal)\leq \boldsymbol{\beta}$, where $S'_\bal$ denotes the appropriate standard module in $\cata^{\si_k\cdot \bla}$.  This follows from the fact that
  \begin{equation*}
    \mathbb{B}_\si S^\kappa_{\Bi}\equiv\mathbb{B}_\si P^\kappa_{\Bi}\equiv S^{\kappa'}_{\Bi'} \text{ modulo smaller $S^{\eta}_{\Bi}$}
  \end{equation*}
  where $\kappa'$ and $\Bi'$ are arrived at by moving the $i$th red
  strand and all black strands between that and the $(i+1)$-st
  rightward to the immediate left of the $(i+2)$-nd.
\end{proof}

\subsection{Serre functors}
\label{sec:serre}

It is a well-supported principle (see, for example, 
Beilinson, Bezrukavnikov and \Mirkovic \cite{BBM04} or Mazorchuk and
Stroppel \cite{MS}) that for any suitable braid group action on a
category, the Serre functor will be given by the full twist.  Here the
same is true, up to grading shift.  Let $\mathfrak{R}=\mathbb{B}_\tau^2$ be the functor
given by a full positive twist of the red strands.  Let $\mathfrak{S}'$ be the functor sends $M\in \cat^\bla_{\al}$ to  $M\big(-\langle\al,\al\rangle+\sum_{i=1}^\ell \langle\la_i,\la_i\rangle\big)$.  Let $\cat_{\mathsf{per}}^\bla$ be the full subcategory of $\cat^\bla$ given by bounded perfect complexes, that is, objects which have finite projective dimension.  We note that in general, this subcategory does not contain many of the important objects in $\cat^\bla$;  for example, it will contain all simple modules if and only if all $\bla$ are minuscule.

\begin{prop}\label{serre}
The right Serre functor of $\cat^\bla_{\mathsf{per}}$ is given by $\mathfrak{S}=\mathfrak{R} \mathfrak{S}'$.
\end{prop}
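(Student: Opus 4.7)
The plan is to verify the defining isomorphism of a right Serre functor,
$$\Hom(X,Y)^{*}\;\cong\;\Hom\bigl(Y,\mathfrak{R}\mathfrak{S}'(X)\bigr),$$
naturally in $X,Y\in\cat^\bla_{\mathsf{per}}$. Since this subcategory is generated by the indecomposable projectives $P_\Bi^\kappa$, it suffices to check the isomorphism on pairs of projectives. The right Serre functor on the bounded perfect derived category of any finite-dimensional graded algebra is the (derived) Nakayama functor, which sends each indecomposable projective to the injective envelope of its simple head (up to a grading shift). Thus the content of the proposition is that $\mathfrak{R}(P_\Bi^\kappa)$ is isomorphic to the injective envelope of the head of $P_\Bi^\kappa$, shifted by the weight-dependent integer prescribed by $\mathfrak{S}'$.

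For the identification $\mathfrak{R}(P_\Bi^\kappa)\cong (P_\Bi^\kappa)^\star$, I would proceed in two steps. First, by Theorem \ref{tilting}, $T := \mathbb{B}_\tau(P_\Bi^\kappa)$ is an indecomposable tilting in $\cata^{\tau\cdot\bla}$, so $\mathfrak{R}(P_\Bi^\kappa) = \mathbb{B}_\tau(T)$. Second, I would show that $\mathbb{B}_\tau$ converts this tilting into the corresponding injective of $\cata^\bla$ by combining two inputs: (i) the self-duality $T\cong T^\star$ proved via the non-degenerate pairing in Theorem \ref{tilting}, and (ii) the bimodule identification $\dot{\bra_\tau}\cong \bra_{\tau^{-1}}$ observed in the proof of Lemma \ref{pro-sta}, which after passing to derived tensor product yields the intertwining $\mathbb{B}_\tau(M^\star)\cong (\mathbb{B}_\tau^{-1}M)^\star$. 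Combining these gives
$$\mathfrak{R}(P_\Bi^\kappa) = \mathbb{B}_\tau(T) \cong \mathbb{B}_\tau(T^\star) \cong \bigl(\mathbb{B}_\tau^{-1}(T)\bigr)^\star \cong (P_\Bi^\kappa)^\star,$$
which is the injective envelope of the head of $P_\Bi^\kappa$, as needed.

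To pin down the grading shift, note that $\mathfrak{R}$ commutes with the $\tU$-action by Proposition \ref{bra-commute}, and therefore preserves the weight-space decomposition of $K_0$; the shift needed to turn $\mathfrak{R}$ into the Nakayama functor depends only on the weight $\la-\al$ of the space in question. A direct Grothendieck-group calculation, comparing the $K_0$-action of the Nakayama functor (given by the contragredient under the Euler form $\langle-,-\rangle_1$) with that of the full twist, produces the claimed constant $-\langle\al,\al\rangle+\sum_i\langle\la_i,\la_i\rangle$; the overall normalization can be pinned down by a single sample computation on an object where both functors are computable by hand, for instance a projective which is also a standard. The main obstacle in the whole argument is thus the derived intertwining of $\mathbb{B}_\tau$ with $\star$ via the bimodule identification $\dot{\bra_\tau}\cong \bra_{\tau^{-1}}$: once that is in place, the reduction of Serre duality to the classical Nakayama-functor picture is formal, and only the $K_0$ bookkeeping of the grading shift remains.
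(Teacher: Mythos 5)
Your argument is correct and reaches the same conclusion as the paper's proof — namely that $\mathfrak{R}$ sends each $P_\Bi^\kappa$ to the injective envelope of its simple quotient, which is the key fact — but the central mechanism you use differs from the paper's. Both proofs route through the self-dual tilting $T=\mathbb{B}_\tau P_\Bi^\kappa$ from Theorem~\ref{tilting}. The paper then shows $\mathbb{B}_\tau^{-1}I_\Bi^\kappa$ is also a self-dual tilting with the same class in $K_0$ and concludes $\mathbb{B}_\tau^{-1}I_\Bi^\kappa\cong\mathbb{B}_\tau P_\Bi^\kappa$ by uniqueness of tiltings with a prescribed $K_0$-class; it then invokes the Mazorchuk--Stroppel criterion that an autoequivalence of a graded finite-dimensional algebra sending projectives to injectives is the Serre functor. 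You instead produce the isomorphism directly via the intertwining $\mathbb{B}_\tau\circ\star\cong\star\circ\mathbb{B}_\tau^{-1}$ and identify the Serre functor with the Nakayama functor by hand. Both routes are legitimate; your approach has the advantage of making explicit the duality compatibility that the paper uses implicitly (in asserting that $\mathbb{B}_\tau^{-1}I_\Bi^\kappa$ has a standard filtration), while the paper's route avoids having to formalize the $\star$-intertwining at the level of derived functors.

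Two small issues. First, a notational slip: $\bra_{\tau^{-1}}$ is not defined, since $\bra_\si$ only makes sense for positive braids; the identity from Lemma~\ref{pro-sta} reads $\dot{\bra_\si}\cong\bra_{\bar\si}$ where $\bar\si$ is the minimal positive lift of $w^{-1}$, and for $\tau$ one uses $\bar\tau=\tau$ because $w_0^{-1}=w_0$. Your intertwining is then a formal consequence, but you should state $\dot{\bra_\tau}\cong\bra_\tau$ rather than invoke an undefined object, and the passage from this bimodule identity to $\mathbb{B}_\tau\circ\star\cong\star\circ\mathbb{B}_\tau^{-1}$ deserves a line of justification rather than being left implicit. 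Second, Theorem~\ref{tilting} shows $\mathbb{B}_\tau P_\Bi^\kappa$ is tilting but not that it is indecomposable; your proof does not actually need indecomposability, so this is harmless, but the adjective should be dropped. On the grading shift, your $K_0$ comparison plus a spot check is in principle fine, but the paper's observation is cleaner and sharper: restricted to the projective-injectives $P^0_\Bi$, the full twist $\mathfrak{R}$ is literally a grading shift (all black strands sit to the right of all reds, so red-red crossings are the only contributions), which pins down the constant $\langle\la,\la\rangle-\sum_i\langle\la_i,\la_i\rangle$ exactly and lets one check $\mathfrak{S}P^0_\Bi\cong I^0_\Bi$ on the nose. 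You would do well to adopt that computation rather than the less explicit $K_0$ argument.
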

\begin{proof}
  First consider the action of $\mathfrak{S}$ on
  projective-injectives: the twists of red strands are irrelevant to
  black strands that begin to the right of all of them, so
  $$\mathfrak{R}\cong \operatorname{Id}\big(\lllr-\sum_{i=1}^\ell \langle\la_i,\la_i\rangle\big)$$ as functors on the
  projective-injective category.  We let $I^\kappa_{\Bi}$ be the injective hull of the cosocle of $P^\kappa_{\Bi}$. Since $I_\Bi^0\cong
  P^0_\Bi(\lllr-\langle \al,\al\rangle),$ on this subcategory
  $\mathfrak{S}P^0_\Bi=P^0_\Bi(\lllr-\langle \al,\al\rangle)\cong
  I_\Bi^0$ and so $\mathfrak{S}$ is the graded Serre functor.

  Since they both have costandard and standard filtrations and the same
  class in the Grothendieck group, we have that
  $\mathbb{B}_\tau^{-1}I^\kappa_\Bi$ and $\mathbb{B}_\tau
  P^{\kappa}_{\Bi}$ are the same self-dual tilting module (ignoring
  grading for the moment).  Thus, $\mathfrak{R}P^\kappa_\Bi\cong
  I^\kappa_\Bi$ (again, ignoring the grading). In particular,
  $\mathfrak{R}$ sends projectives to injectives, and is an
  equivalence by Theorem \ref{braid-act}.  By \cite[Theorem 3.4]{MS}, the result follows.
\end{proof}

\begin{samepage}
\section{Rigidity structures}
\label{sec:rigid}

\subsection{Coevaluation and evaluation for a pair of representations}
\label{sec:co-evaluation}

Now, we must consider the cups and caps in our theory.  The most basic case of this is $\bla=(\la,\la^*)$, where we use
$\la^*=-w_0\la$ to denote the highest weight of the dual
representation to $V_\la$.  It is important to note that $V_\la\cong V_{\la^*}^*$, but this isomorphism is not canonical.  

\end{samepage}

In fact, the representation $K_0(\alg^\la)$ comes with more structure, since it is an integral form $V_\bla^\Z$.  In particular, it comes with a distinguished highest weight vector $v_h$, the class of the unique simple in $\cata^\la_\la$ which is 1-dimensional and concentrated in degree 0.  Thus, in order to fix the isomorphism above, we need only fix a lowest weight vector $v_l$ of $V_{\la^*}$, and take the unique invariant pairing such that $\langle v_h,v_l\rangle=1$.

Our first step is to better understand the lowest weight category $\cata^\la_{w_0\la}$: consider a reduced expression $\mathbf{s}$ in the Weyl group $W$ of $\fg$, and let $s_j$ be the product of the first $j$ reflections in
this word.
\begin{defn}\label{longest-sequence}
  Consider the sequence
  \begin{equation*}
    \Bi^\la_{\mathbf{s}}=(i_1^{(\la^{i_1})},i_2^{\left((s_1\la)^{i_2}\right)},\dots, i_k^{\left((s_{k-1}\la)^{i_{k}}\right)})
  \end{equation*}
  Let $g_i$ be the number of times $i$ appears in
  $\Bi^\la_{\mathbf{s}}$ for any reduced expression for the longest
  element $w_0$.  These numbers can also be defined as the unique
  integers so that $\la-w_0(\la)=\sum_i g_i\al_i$.
\end{defn}

\begin{prop}\label{proj-irr}
The projective $P_{\Bi^{\la}_{\mathbf{s}}}^{0}$ over $\alg^\la$ is irreducible, and only depends on the product $s_k\in W$.
\end{prop}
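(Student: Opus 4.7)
The plan is to identify the Grothendieck class $[P^0_{\Bi^\la_\mathbf{s}}]$ with a generator of the one-dimensional weight space $V_\la[s_k\la]$, and deduce both irreducibility and independence from the reduced expression from this identification. The non-negativity of $n_j = \al_{i_j}^\vee(s_{j-1}\la)$ (so that $\Bi^\la_\mathbf{s}$ is well-defined) follows from reducedness of $\mathbf{s}$, which forces $s_{j-1}^{-1}\al_{i_j}$ to be a positive root, combined with dominance of $\la$: $\al_{i_j}^\vee(s_{j-1}\la) = (s_{j-1}^{-1}\al_{i_j})^\vee(\la) \geq 0$.

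To compute $[P^0_{\Bi^\la_\mathbf{s}}] \in K_0(\alg^\la) \cong V_\la^\Z$, I will use that $\fF_i^{(n)}$ takes $P^0_\Bi$ to $P^0_{(\Bi,\,i^{(n)})}$ up to a $q$-shift, so iteration gives
\[
[P^0_{\Bi^\la_\mathbf{s}}] \;=\; q^{a(\mathbf{s})}\, F_{i_k}^{(n_k)}\cdots F_{i_1}^{(n_1)}\,v_h
\]
for an explicit diagrammatic shift $a(\mathbf{s})$. This element lies in the one-dimensional space $V_\la[s_k\la]$, which has dimension one by $W$-invariance of characters since $s_k\la \in W\la$, and is non-zero: an inductive check shows that each divided power $F_{i_j}^{(n_j)}$ applied to the current weight vector traverses a non-trivial $\mathfrak{sl}_2(i_j)$-string inside $V_\la$, moving from $\mathfrak{sl}_2$-weight $+n_j$ to $-n_j$.

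The block $\cata^\la_{s_k\la}$ therefore has a unique simple $L$ up to grading shift, with $[L]$ spanning $V_\la[s_k\la]$, so $[P^0_{\Bi^\la_\mathbf{s}}] = q^b[L]$ for some $b$. Since $P^0_{\Bi^\la_\mathbf{s}}$ is projective with all its composition factors in this block being shifts of $L$, the graded multiplicity of $L$ in $P^0_{\Bi^\la_\mathbf{s}}$ equals the monomial $q^b$, forcing $P^0_{\Bi^\la_\mathbf{s}} \cong L(b)$ and hence irreducibility. Since $s_k\la$, and thus $L$ up to shift, depends only on $s_k$, the only remaining issue is that the total shift $b$ depends only on $s_k$; this is the main obstacle, and would follow from verifying that the combined contribution of $a(\mathbf{s})$ and the scalar relating $F_{i_k}^{(n_k)}\cdots F_{i_1}^{(n_1)} v_h$ to $v_{s_k\la}$ is invariant under the elementary braid moves on reduced expressions.
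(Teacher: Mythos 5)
Your argument is correct and takes a genuinely different route from the paper's. The paper reduces to showing that the endomorphism ring of $P^0_{\Bi^\la_{\mathbf{s}}}$ is one-dimensional and proves this diagrammatically: by pulling strands straight, applying Reidemeister III moves, and noting that any forced crossing between strands of different colors $i_j\neq i_{j'}$ creates an idempotent factoring through an empty weight space (hence zero), every element of $e_{\Bi^\la_{\mathbf{s}}}\alg^\la e_{\Bi^\la_{\mathbf{s}}}$ can be rewritten with no cross-color crossings, reducing to Lauda's $\mathfrak{sl}_2$ base case. You instead work in $K_0$: you compute $[P^0_{\Bi^\la_{\mathbf{s}}}]$ as a $q$-power times $F_{i_k}^{(n_k)}\cdots F_{i_1}^{(n_1)}v_h$, note that this vector and $[L]$ are both elements of the rank-one lattice $V^\Z_\la[s_k\la]$ over $\Z[q^{\pm 1/D}]$, and since the simples form a basis of $K_0$ while $F_{i_k}^{(n_k)}\cdots F_{i_1}^{(n_1)}v_h$ also generates the extremal weight lattice, conclude that the graded composition multiplicity of $L$ in $P^0_{\Bi^\la_{\mathbf{s}}}$ is a unit of $\Z[q^{\pm 1/D}]$ with non-negative integer coefficients, hence a single monomial. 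This forces $P^0_{\Bi^\la_{\mathbf{s}}}\cong L(b)$ and gives irreducibility, while entirely avoiding the paper's diagrammatic manipulation; the price is that it leans more heavily on the exactness of the categorified divided-power action, which the paper's proof does not need.

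The ``main obstacle'' you flag at the end is not actually a gap you need to close by hand: it is exactly the well-known statement that the iterated divided-power product $F_{i_k}^{(n_k)}\cdots F_{i_1}^{(n_1)}v_h$ equals the extremal weight vector $v_{s_k\la}$ of $V^\Z_\la$ independently of the choice of reduced expression for $s_k$ (Lusztig, Kashiwara). Combined with $[\fF_i^{(n)}M]=F_i^{(n)}[M]$, which is how the divided-power functor is normalized, this shows $[P^0_{\Bi^\la_{\mathbf{s}}}]$ is literally $v_{s_k\la}$ with no extraneous shift, so $b$ is determined by $s_k$ alone and the independence claim is immediate. With this standard fact cited, your proof is complete.
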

\begin{proof}
Let us show this induction.  The base case is when
  the expression is length 1, which is the case of $\mathfrak{sl}_2$,
  which was shown by Lauda \cite{LauSL21} (this corresponds to the fact
  that the Grassmannian of $k$-planes in $k$-space is a point).

In general, it is clear from 1-dimensionality of extremal weight spaces that the category $\cata^\la_{s_j\la}$ has a unique indecomposable projective and a unique simple, so we need only show that
$\Hom(P_{\Bi^{\la}_{\mathbf{s}}}^{0},P_{\Bi^{\la}_{\mathbf{s}}}^{0})=1$.  Thus, we need
only consider diagrams beginning and ending with our preferred
idempotent.  We claim that such diagrams can be written as a sum of
diagrams where no lines of different colors cross.  This 
reduces our proposition to the $\mathfrak{sl}_2$ case.

Now consider an arbitrary diagram, and consider the left-most block of
strands of a single color whose members cross strands of other colors.
If no strands start in this block at the bottom and end up in a
different block at the top, then we can simply ``pull straight'' and
have a diagram where the first ``bad block'' is further right.

If a strand does leave this block traveling upward, it must be matched
by one which leaves it traveling downward, and the strands must cross.
Using RIII moves, one can move this crossing left (with correction
terms that have fewer such strands, since the correction terms smooth
crossings), so that all differently colored strands pass to its left.
But then at this crossing, we have reordered the strands so that we
get $\Bi_\la^{\mathbf{s}'}$ for some truncation of our word, and then
a repetition of the last element.  This is a composition of induction
functors corresponding to an empty weight space, so is 0.  Thus, by
induction, we are done.
\end{proof}
Fix an expression $\mathbf{s}_0$ for the longest element $w_0$ and consider this construction for $\Bi^\la=\Bi^\la_{\mathbf{s}_0}$.  We fix $v_l=[P^0_{\Bi^{\la^*}}]$, and use this to fix an isomorphism $V_{\la^*}\cong V_\la^*$ which we use freely throughout the rest of the paper.

We can now consider $P_{\Bi^\la}^0$ standardized in two different
ways, obtaining two standard modules: $S^{(0,2\rcl)}_{\Bi_\la}=P^{(0,2\rcl)}_{\Bi_\la}$ and
$S^{0}_{\Bi_\la}$. Proposition \ref{proj-irr} shows that the first has
simple cosocle and the second is itself simple. We denote the cosocles
of these representations by $\Lco$ and $\ocL$.

Recall that the {\bf coevaluation} $\Z((q))\to V_{\la,\la^*}$ is the map sending $1$ to the canonical element of the pairing we have fixed, and {\bf evaluation} is the map induced by the pairing $V_{\la^*,\la}\to \Z((q))$.

\begin{defn}
Let 
$$\mathbb{K}^{\la,\la^*}_{\emptyset}\colon \Dbe(\mathsf{gVect})\to \cat^{\la,\la^*}\text{
be the functor }\RHom_{\K}(\dot{L}_\la,-)(2\llrr)[-2\rcl]$$ \centerline{and} $$\mathbb{E}_{\la^*,\la}^{\emptyset}\colon \cat^{\la^*,\la}\to \Dbe(\mathsf{gVect})\text{ be the functor }\overset{L}\otimes_{\alg^\bla} \dot{L}_{\la^*}$$
\end{defn}
These functors preserve the appropriate categories since by \cite[Theorem 3.16]{WebCTP}, $\Lco$ has a projective resolution in $\Dbe(\cata^\bla)$.  
\begin{prop}\label{coev}
The functor $\mathbb{K}^{\la,\la^*}_{\emptyset}$ categorifies the coevaluation, and $\mathbb{E}_{\la^*,\la}^{\emptyset}$ the evaluation.
\end{prop}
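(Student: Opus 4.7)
The plan is to compute the Grothendieck classes $[\mathbb{K}^{\la,\la^*}_\emptyset(\K)]$ and $[\mathbb{E}_{\la^*,\la}^\emptyset(M)]$ and identify them with the images of the decategorified coevaluation and evaluation maps, using uniqueness of the invariant vector in $V_\la\otimes V_{\la^*}$ together with careful tracking of grading and homological shifts.

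First I would check that the two functors land in the stated bounded derived categories. For $\mathbb{K}^{\la,\la^*}_\emptyset$ the key input is that $\Lco$ admits a finite projective resolution in $\cat^{\la,\la^*}$, which is \cite[Theorem 3.15]{CTP}; combined with finite-dimensionality of the relevant modules this ensures both $\RHom_\K(\dot{L}_\la,-)$ and $-\overset{L}\otimes_{\alg^\bla}\dot{L}_{\la^*}$ preserve $\Dbe$.

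For the main content, since $\Lco$ is graded self-dual in its natural grading I expect $[\mathbb{K}^{\la,\la^*}_\emptyset(\K)] = (-1)^{2\rcl}q^{2\llrr}[\Lco]$ in $K_0(\cat^{\la,\la^*})\cong V_{\la,\la^*}$. The invariant subspace $(V_\la\otimes V_{\la^*})^{U_q(\fg)}$ is one-dimensional, so up to scalar it suffices to show $\fE_i\Lco = \fF_i\Lco = 0$ for every simple root $i$. I would establish this via the explicit stringy presentation of $\Lco$ as the cosocle of $P^{(0,2\rcl)}_{\Bi_\la}$ with $\Bi_\la=\Bi^\la_{\mathbf{s}_0}$: using the diagrammatic manipulation from the proof of Proposition \ref{proj-irr}, any would-be nonzero element of $\fE_i\Lco$ or $\fF_i\Lco$ can be rewritten as a composition that factors through a repetition of the last element of a reduced expression and hence through an empty extremal weight space, forcing it to vanish. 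To pin down the remaining scalar I would pair $[\mathbb{K}^{\la,\la^*}_\emptyset(\K)]$ against the class representing $v_h\otimes v_l$, which is realized by a standardization built from the trivial projective and the irreducible projective $P^0_{\Bi^{\la^*}}$ of Proposition \ref{proj-irr}. The shifts $(2\llrr)[-2\rcl]$ are designed precisely so that the resulting graded-$\Ext$ count equals $1$, matching the normalization $\langle v_h,v_l\rangle = 1$.

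For $\mathbb{E}_{\la^*,\la}^\emptyset$ I would argue symmetrically. On Grothendieck classes the derived tensor product computes the graded Euler pairing $\langle [M],[L_{\la^*}]\rangle$, and the invariance of $[L_{\la^*}]$ established by the same argument as above shows this descends to a $U_q(\fg)$-intertwiner $V_{\la^*,\la}\to\Z((q))$; by one-dimensionality of the space of such intertwiners this is forced to be a scalar multiple of evaluation, and the scalar is fixed by a single pairing computation against $v_l\otimes v_h$. Alternatively one can invoke the snake identity on Grothendieck classes to extract evaluation once coevaluation is in hand. The main obstacle I foresee is the bookkeeping of shifts and signs — in particular the factor $(-1)^{2\rcl}$ when $2\rcl$ is odd, and reconciling the Euler-form conventions with the slightly nonstandard ribbon structure adopted in this section — to ensure both normalization computations yield exactly $1$ rather than an unwanted power of $q$ or sign.
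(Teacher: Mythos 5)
The overall architecture of your argument — show that $[\Lco]$ is the invariant vector by killing the $\fE_i$'s, then check a scalar normalization, then deduce the evaluation case — is the same as the paper's. However, the crucial sub-argument for invariance as you have stated it has a gap.

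You propose to show $\fE_i\Lco=0$ (and $\fF_i\Lco=0$) ``using the diagrammatic manipulation from the proof of Proposition \ref{proj-irr},'' namely that any offending diagram factors through a repetition of the last entry of a reduced word and hence through an empty extremal weight space. That argument is tailored to endomorphisms of $P^0_{\Bi^\la}$ in $\cata^{\la}$ (a single red strand): there the crossing of colors can be slid left to produce a repeated $\fF_{i_k}$ over a $1$-dimensional extremal weight space, which vanishes. But $\fE_i\Lco$ lives in $\cata^{\la,\la^*}$ at weight $\alpha_i$, which is a \emph{nonzero} weight space of $V_\la\otimes V_{\la^*}$ — nothing is ``empty'' here, so the rearrangement does not terminate in a vanishing weight space. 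The observation you need instead, and which the paper supplies, is this: $\Lco$ is the cosocle of $P^{(0,2\rcl)}_{\Bi_\la}$, and because $\End(P^0_{\Bi^\la})=\K$ (the \emph{consequence} of Proposition \ref{proj-irr}, not its method), any diagram in which a black strand crosses the second red strand necessarily lies in the radical of $P^{(0,2\rcl)}_{\Bi_\la}$ and therefore dies in $\Lco$. Since every element of $\fE_i\Lco$ is represented by exactly such a diagram, $\fE_i\Lco=0$. Without this ``crossing the second red strand lands you in a proper submodule'' step, the vanishing you need is not established. (Note also that checking $\fF_i\Lco=0$ separately is unnecessary: for a finite-dimensional module, a weight-$0$ vector killed by all $E_i$ generates a trivial submodule, so $F_i$-vanishing is automatic on the decategorified side — this is the observation the paper relies on, so it only verifies the $\fE_i$ case.)

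Your normalization proposal — pairing $[\mathbb{K}^{\la,\la^*}_\emptyset(\K)]$ against the class of the standardization $\mathbb{S}^{\la;\la^*}(P_\emptyset\boxtimes P^0_{\Bi^{\la^*}})$ representing $v_h\otimes v_l$ — is a legitimate alternative to the paper's more direct route, which computes the opposite extremal projection $[P^{(0,2\rcl)}_{\Bi_\la}]=F_{\Bi^\la}v_h\otimes v_{h^*}$ and then transports to $v_h\otimes v_l$ via the antipode, producing the factor $(-1)^{2\rcl}q^{-2\llrr}$ that the shifts in the definition of $\mathbb{K}$ cancel. Your method would require carrying out an Euler-form computation to pin down the same scalar, which is more work but should succeed. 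The evaluation side (either a direct check on $P^{(0,2\rcl)}_{\Bi_{\la^*}}\otimes L_{\la^*}\cong\K$ as in the paper, or, as you suggest, by invoking the snake identity once coevaluation is established) is fine.
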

\begin{proof}
Since $\Lco$ is self-dual, we must first check that $[\Lco]$ is invariant.
  Of course, the invariants are the space of vectors of weight 0 such that $\{v|E_iv=0\}$ for any $i$.  Since $P^0_{\Bi^\la}$ has no positive degree endomorphisms, any diagram in which a strand passes over the second red strand is in a proper submodule of $P^{(0,2\rcl)}_{\Bi_\la}$, and so $\fE_i\Lco=0$ for all $i$.  Thus $[\Lco]$ is invariant.
  In fact, $\Lco$ is the only such representation, since the
  $-\la^*$-weight space of $V_\la$ is 1 dimensional.

Now, we need just check the normalization is correct.  Of course, $[\Lco]$'s projection to $(V_\la)_{low}\otimes (V_{\la^*})_{high}$ is \[[ P^{(0,2\rcl)}_{\Bi_\la}]=[P^0_{\Bi^{\la}}]\otimes [P^0_\emptyset]=F_{\Bi^\la}v_h\otimes v_{h^*}.\]  Thus, by invariance, the projection to $(V_\la)_{high}\otimes (V_{\la^*})_{low}$ is $$v_h\otimes S(F_{\Bi^\la})v_{h^*}=(-1)^{2\rcl}q^{-2\llrr}v_h\otimes v_l.$$

On the other hand, one can easily check that $-\overset{L}\otimes_{\alg^\bla}L_{\la^*}$ kills all modules of the form $\fF_iM$, so it gives an invariant map, whose normalization we, again, just need to check on one element.  For example, $P^{(0,2\rcl)}_{\Bi_{\la^*}}\otimes L_{\la^*}\cong\K$, so we get 1 on $v_l\otimes v_h$, which is the correct normalization for the evaluation.
\end{proof}

We represent these functors as leftward oriented cups as is done for the coevaluation and evaluation in the usual diagrammatic approach to quantum groups, as shown in Figure \ref{coev-ev}.

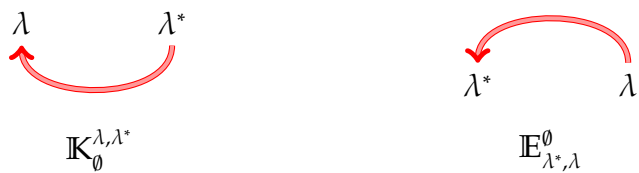
\begin{figure}
\begin{tikzpicture}
\node [label=below:{$\mathbb{K}^{\la,\la^*}_{\emptyset}$}] at (-3,0){
\begin{tikzpicture}
\draw[wei,<-] (-1,0) to[out=-90,in=-90] node[at start, above]{$\la$}
node [at end, above]{$\la^*$} (1,0);
\end{tikzpicture}
};
\node [label=below:{$\mathbb{E}_{\la^*,\la}^{\emptyset}$}] at (3,0){
\begin{tikzpicture}
\draw[wei,<-] (-1,0) to[out=90,in=90] node[at start, below]{$\la^*$}
node [at end, below]{$\la$} (1,0) ;
\end{tikzpicture}
};
\end{tikzpicture}
\caption{Pictures for the coevaluation and evaluation maps.}
\label{coev-ev}
\end{figure}

In order to analyze the structure of $\Lco$ and $\ocL$, we must
understand some projective resolutions of standards.  This can be done with surprising precision in the case where $\ell=2$.

Define a map $\kappa_j:[1,2]\to [0,n]$ by 
$\kappa_j(2)=j$ and $\kappa_j(1)=0.$ 
Given a subset $T\subset [j+1,n]$, we
let $\Bi_{T}$ be the sequence given by $i_1,\dots ,i_j$ followed by
$T$ in reversed sequence, and then $[j+1,n]\setminus T$ in sequence
and let $\kappa_{T}(2)=j+\#T$.  Let $$\chi_T=\sum_{k\in T}\left\langle
  \al_{i_k},-\la_2+\sum_{j< m< k}\al_{i_m}\right\rangle.$$

\begin{prop}\label{pro-res}
  The standard $S_\Bi^{\kappa_j}$ has a projective resolution of the form $$\cdots \longrightarrow \bigoplus_{|T|=n}P_{\Bi_T}^{\kappa_T}(\chi_T)\longrightarrow\cdots \longrightarrow P_\Bi^{\kappa_j} \longrightarrow S_\Bi^{\kappa_j}$$
\end{prop}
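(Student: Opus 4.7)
The plan is to construct the differentials explicitly, verify $d^2 = 0$, and then prove exactness by induction on $n - j$ via a mapping cone decomposition.

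First, for each $T \subseteq [j+1, n]$ and each $k \in T$, setting $T' = T \setminus \{k\}$, I define the component $P_{\Bi_T}^{\kappa_T}(\chi_T) \to P_{\Bi_{T'}}^{\kappa_{T'}}(\chi_{T'})$ of the differential to be right-multiplication by the diagram that takes the strand labeled $i_k$ out of its reversed position on the left of the second red strand (where it sits among the strands indexed by $T$) and drags it back rightward, across the second red strand and through any intervening strands in $[j+1,k-1] \setminus T$, to rejoin the right block at its natural position in $\Bi_{T'}$; I affix a Koszul sign $(-1)^{|\{k' \in T \,:\, k' < k\}|}$. Counting the black--black crossing contributions together with the dots forced along the $k$-th strand by the red--black crossing relation, one checks that this map is homogeneous of the required degree $\chi_T - \chi_{T'} = \langle \al_{i_k}, -\la_2 + \sum_{j < m < k} \al_{i_m}\rangle$.

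Second, $d^2 = 0$ is a Koszul-style verification. For indices $k < k'$ both in $T$, removing $\{k\}$ then $\{k'\}$ versus $\{k'\}$ then $\{k\}$ yields two composites whose underlying diagrams agree up to the difference of Koszul signs; agreement follows from the relation allowing a crossing of two black strands to be pushed past a red strand modulo predictable error terms, together with the fact that dots introduced by the red--black crossing relation on distinct strands commute freely.

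Third, exactness is proved by induction on $n - j$. The base case $n = j$ reduces to the identity $P_\Bi^{\kappa_j} \to S_\Bi^{\kappa_j}$, as there is nothing to the right of the second red strand to violate. For the inductive step I filter the complex according to whether $n \in T$: the quotient complex (terms with $n \notin T$) is the analogous resolution for the truncated data $(i_1, \dots, i_{n-1})$ with $\kappa_j$, which by induction is exact with cokernel $S_{(i_1, \dots, i_{n-1})}^{\kappa_j}$; the kernel subcomplex (terms with $n \in T$) is isomorphic, after a shift absorbing the contribution of the new red--black crossing to $\chi_{\{n\}}$, to the analogous resolution for the sequence $(i_1, \dots, i_j, i_n, i_{j+1}, \dots, i_{n-1})$ with $\kappa_{j+1}$, and is exact by induction. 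The resulting long exact sequence yields exactness of the total complex in positive homological degree, and the cokernel in degree zero is seen to be $S_\Bi^{\kappa_j}$ since the image of $\bigoplus_{|T| = 1} P_{\Bi_T}^{\kappa_T}(\chi_T) \to P_\Bi^{\kappa_j}$ consists exactly of diagrams in which some strand from $[j+1, n]$ has been pulled across the second red strand --- precisely the defining relations of the standard.

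The main obstacle will be matching grading shifts carefully in the cone decomposition, especially reconciling $\chi_T$ for $T \ni n$ with $\chi_{T \setminus \{n\}}$ computed for the second inductive resolution, since the contribution $\langle \al_{i_n}, -\la_2 + \sum_{j < m < n} \al_{i_m}\rangle$ must be correctly distributed between the isomorphism identifying the two resolutions and the grading shift placing the kernel subcomplex inside the total complex.
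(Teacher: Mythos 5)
Your proposal takes a genuinely different route from the paper, but as written it has a gap at the crucial step.

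The paper's proof is leaner: it quotes from the proof of Theorem 3.7 of the companion paper a short exact sequence
\[
0\longrightarrow S_{\Bi''}^{\kappa_{j+1}}\Bigl(\bigl\langle \al_{i_n},-\la_2+\textstyle\sum_{j<\ell<n}\al_{i_\ell}\bigr\rangle\Bigr)\longrightarrow \fF_{i_n}S_{\Bi'}^{\kappa_j}\longrightarrow S_\Bi^{\kappa_j}\longrightarrow 0,
\]
lifts the first map to a chain map of inductively constructed resolutions, and takes the cone. The differentials are never written out explicitly, and $d^2=0$ is free. Your approach instead builds the whole complex with explicit diagrammatic differentials, verifies $d^2=0$ by hand, and then proves acyclicity via a filtration. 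That is more work, but if it were pushed through it would give more detailed information about the differentials, which the paper's cone construction does not.

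The gap is in the acyclicity argument. First, a bookkeeping issue: the differential sends the $T$ component to the $T\setminus\{k\}$ components, so the set of terms with $n\notin T$ is the \emph{subcomplex} and the set with $n\in T$ is the \emph{quotient} (you have them reversed). Second, the terms with $n\notin T$ all still carry the strand $i_n$ on the far right, so the subcomplex is $\fF_{i_n}$ applied to the resolution for the truncated data, and its degree-zero cohomology is $\fF_{i_n}S_{\Bi'}^{\kappa_j}$, not $S_{(i_1,\dots,i_{n-1})}^{\kappa_j}$. Third, and most seriously, the long exact sequence does \emph{not} by itself give vanishing of $H^{-1}$ of the total complex. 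With the corrected identifications, what the long exact sequence gives is
\[
0\longrightarrow H^{-1}(C^\bullet)\longrightarrow S_{\Bi''}^{\kappa_{j+1}}(\text{shift})\longrightarrow \fF_{i_n}S_{\Bi'}^{\kappa_j}\longrightarrow H^{0}(C^\bullet)\longrightarrow 0,
\]
where the middle map is the connecting homomorphism. To conclude $H^{-1}=0$ you need that map to be \emph{injective}, and to identify $H^0$ with $S_\Bi^{\kappa_j}$ you need to know its cokernel. Both facts are exactly the content of the short exact sequence the paper quotes from Theorem 3.7 of \cite{CTP}; your proof nowhere supplies this and cannot get it from the long exact sequence. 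Your direct identification of the image of the degree $-1$ part of the complex (as the standardly-violating diagrams) would handle $H^0$ if made rigorous, but says nothing about $H^{-1}$. So the argument, as stated, has a genuine hole precisely where the paper invokes its key input.
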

\begin{proof}
We induct on $n-j$.  If $j=n$, then $S_\Bi^{\kappa_j}$ is itself projective, so we may take
the trivial resolution.  Let $\Bi'$ be $\Bi$ with its last entry removed, and $\Bi''$ be
$\Bi$ with its last entry moved to the $j+1$st position. As we showed in the proof of \cite[Theorem 3.7]{WebCTP}, we have an exact sequence
$$0\longrightarrow S_{\Bi''}^{\kappa_{j+1}}\Big(\big\langle
\al_{i_n},-\la_2+\sum_{j<
  \ell<n}\al_{i_\ell}\big\rangle\Big)\longrightarrow
\fF_{i_n}S_{\Bi'}^{\kappa_j}\longrightarrow
S_{\Bi}^{\kappa_{j}}\longrightarrow 0. $$ Applying the inductive
hypothesis, we obtain projective resolutions of the left two
factors. Furthermore, we can lift the leftmost map to a map between
projective resolutions.  The cone of this map is the desired
projective resolution of $S_{\Bi}^{\kappa_{j}}$.
\end{proof}

The same principle can be used for any value of $\ell$ to construct an
explicit description of a projective resolution for any standard, but
carefully writing this down is a bit more subtle and difficult than
the $\ell=2$ case, so we will not do so here.  This provides a
resolution of $\ocL$, since it is itself standard.  In particular, it
shows that
\begin{cor}
$\displaystyle{\Ext^i(\ocL,\Lco)=\begin{cases}0& i\neq 2\rcl\\ \K(2\llrr) & i=2\rcl\end{cases}.}$
\end{cor}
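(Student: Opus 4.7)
The approach is to apply Proposition~\ref{pro-res} to obtain an explicit projective resolution of $\ocL = S^0_{\Bi_\la}$, then compute $\Ext^\bullet(\ocL,\Lco)$ via $\Hom(-,\Lco)$. Specializing with $\bla=(\la,\la^*)$, $j=0$, $\Bi=\Bi_\la$, and $n=2\rcl$, I get a length-$2\rcl$ resolution
\[
0 \to P^{(0,2\rcl)}_{\Bi_\la^{\mathrm{rev}}}(\chi_{[1,n]}) \to \cdots \to \bigoplus_{|T|=k} P^{\kappa_T}_{\Bi_T}(\chi_T) \to \cdots \to P^{0}_{\Bi_\la} \to \ocL \to 0.
\]

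The central observation is a support restriction on $\Lco$. As a quotient of the standardization $S^{(0,2\rcl)}_{\Bi_\la} = \mathbb{S}^{\la,\la^*}(P^0_{\Bi^\la}\boxtimes P^0_\emptyset)$, the module $\Lco$ satisfies $e_{\Bi',\kappa'}\Lco=0$ whenever $\kappa'\neq(0,2\rcl)$, because the standardization functor annihilates any diagram in which a black strand leaves the block between the two red strands. Applying $\Hom(-,\Lco)$ to the resolution, the $k$-th term becomes $\bigoplus_{|T|=k} e_{\Bi_T,\kappa_T}\Lco(-\chi_T)$; since $\kappa_T = (0,|T|)$ in our conventions, this vanishes unless $|T|=n$, i.e.\ $T=[1,n]$. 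Thus all terms except the top are zero, giving immediately $\Ext^i(\ocL,\Lco) = 0$ for $i \neq 2\rcl$.

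For the surviving top term, $\Ext^{2\rcl}(\ocL,\Lco) = e_{\Bi_\la^{\mathrm{rev}},(0,2\rcl)}\Lco(-\chi_{[1,n]})$. Since no strand crosses the second red at idempotents of shape $(0,2\rcl)$, this Hom reduces to the component $e_{\Bi_\la^{\mathrm{rev}}}P^0_{\Bi^\la}$ of the irreducible $\alg^\la$-module from Proposition~\ref{proj-irr}. By uniqueness of the indecomposable projective in the extremal weight category $\cata^\la_{w_0\la}$, this space is one-dimensional, concentrated in a single internal degree that can be read off any minimal-length positive braid diagram in $\alg^\la$ permuting $\Bi^\la$ to $\Bi_\la^{\mathrm{rev}}$.

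The main obstacle will be the grading bookkeeping: I must verify that the above internal degree combined with $-\chi_{[1,n]}$ equals exactly $2\llrr$. Both quantities are quadratic expressions in the pairings $\langle\al_{i_k},\al_{i_m}\rangle$; using $\sum_k\al_{i_k} = \la-w_0\la = \la+\la^*$ together with $\langle\la,\la\rangle = \langle\la^*,\la^*\rangle$ (which follows from $\la^* = -w_0\la$), a direct bilinear calculation collapses the combined shift to the claimed value $2\llrr$, completing the proof.
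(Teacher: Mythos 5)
Your outline matches the paper's: apply Proposition~\ref{pro-res} to $\ocL$, argue that only the top term of the resolution survives $\Hom(-,\Lco)$, and compute the grading of that survivor. Within that outline, however, two of the intermediate steps are not correct as stated or are left undone.

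The support argument is off. You assert that $e_{\Bi',\kappa'}\Lco = 0$ whenever $\kappa' \neq (0,2\rcl)$ ``because the standardization functor annihilates any diagram in which a black strand leaves the block between the two red strands.'' That is not how standardization behaves: in $\mathbb{S}^{\la;\la^*}(P^0_{\Bi^\la}\boxtimes P^0_\emptyset)$ a black strand is free to cross the second red strand to the right (and come back); only leftward crossings out of its block are forbidden. Consequently $S^{(0,2\rcl)}_{\Bi_\la}$ itself has nonzero $e_{\Bi',\kappa'}$-pieces for many $\kappa'$ with $\kappa'(2)<2\rcl$. The vanishing you actually need is a statement about the \emph{simple} $\Lco$: it is the cosocle of a standard sitting in the top stratum of the semi-orthogonal decomposition of \cite[Proposition 3.20]{CTP}, and therefore cannot occur in the cosocle of $P^{\kappa'}_{\Bi'}$ when $\kappa'$ lies in a strictly lower stratum. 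That is what the paper is using when it remarks that all but the last term have no maps to $\Lco$; it is not a support property of the standardization functor.

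The grading is also only sketched. The paper evaluates $\chi_{[1,n]}=\sum_k\langle\al_{i_k},\ -\la^*+\sum_{m<k}\al_{i_m}\rangle$ directly by grouping the summands according to the reduced expression underlying $\Bi^\la$: each group contributes $\langle\beta,-\la^*\rangle$ for a distinct positive root $\beta$, so the sum collapses to $\sum_{\al\in R^+}\langle\al,-\la^*\rangle=-2\llrr$. Your plan introduces an additional contribution, the degree of a minimal braid diagram in $\alg^\la$ rearranging $\Bi^\la$ into $\Bi_\la^{\mathrm{rev}}$, and defers everything to an unspecified ``direct bilinear calculation.'' You must in fact check that this braid contributes zero; that is not automatic from general degree bookkeeping (a single $i\neq j$ crossing already has nonzero degree $-\langle\al_i,\al_j\rangle$), but rather follows from Proposition~\ref{proj-irr}, which makes $P^0_{\Bi^\la_{\mathbf{s}}}$ independent of the reduced expression $\mathbf{s}$ for $w_0$ with no internal shift, $\Bi_\la^{\mathrm{rev}}$ being the sequence coming from the reversed reduced word. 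Without that observation, and without actually evaluating $\chi_{[1,n]}$, the final step of your argument is a genuine gap.
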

\begin{proof}
All of the projectives which appear in the resolution of $\ocL$ has no
maps to $\Lco$ except the last term.  We can break up the grading
shift of this term into the pieces corresponding to simple reflections
in a reduced expression for a longest word of $W$, which are in turn
in canonical bijection is with the set of positive roots $R^+$.  Thus,
we have \[\sum_{i=1}^n\left\langle
  \al_{i_k},-\la^*+\sum_{ m< k}\al_{i_m}\right\rangle=\sum_{\al\in
  R^+}\langle\al, -\la^* \rangle=-2\langle\la^*,\rho\rangle=-2 \llrr\] which is $P_{\Bi_\la}^{(0,\rcl)}(-2\llrr)$.  Thus we have $$\Ext^i(\ocL,\Lco)\cong \Ext^{i-2\rcl}(P_{\Bi_\la}^{(0,2\rcl)}(-2\llrr),\Lco)$$ and the result follows.
\end{proof}

It also shows more indirectly that $\Lco$ has a beautiful, if more
complicated resolution.

\begin{prop}\label{sta-res}
There is a resolution $$ \cdots \longrightarrow M_j\longrightarrow \cdots \longrightarrow M_1\longrightarrow M_0\longrightarrow \Lco\longrightarrow 0$$ of $\Lco$  with the property that \begin{itemize}
\item $M_{2\rcl-j}$ lies in the subcategory generated by $S^{\kappa_j}_{\Bi}$ for all different choices of $\Bi$. In particular, if $j>2\rcl$, then $M_j=0$.
\item $M_{2\rcl}\cong\ocL(-2\llrr)$.
\end{itemize}
\end{prop}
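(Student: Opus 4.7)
The plan is to construct $M_\bullet$ by applying Serre duality to the projective resolution of $\ocL$ supplied by Proposition \ref{pro-res}. The input to the construction is the observation that Proposition \ref{pro-res}, applied to $\ocL=S^{\kappa_0}_{\Bi_\la}$, yields a projective resolution $Q_\bullet\to\ocL$ of length exactly $n=2\rcl$, with $k$-th term $Q_k=\bigoplus_{|T|=k}P^{\kappa_T}_{\Bi_T}(\chi_T)$ — a sum of projectives all satisfying $\kappa_T(2)=k$, each of whose standard filtration has its top standard $S^{\kappa_T}_{\Bi_T}(\chi_T)$ in the subcategory generated by $S^{\kappa_k}_{\Bi'}$ (with lower subquotients in subcategories indexed by larger values of $\kappa(2)$, by the semi-orthogonal decomposition of \cite[Proposition 3.20]{CTP}). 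In particular the top term $Q_n\cong P^{\kappa_n}_{\Bi_\la}(-2\llrr)$, with the identification following from Proposition \ref{proj-irr} applied to variants of $\Bi_\la$ arising from different reduced words of $w_0$, is itself a single standard with cosocle $\Lco(-2\llrr)$.

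Since $Q_\bullet$ exhibits $\ocL$ as having projective dimension $2\rcl$, it lies in $\cat_{\mathsf{per}}^\bla$, where Proposition \ref{serre} provides the Serre functor $\mathfrak{S}=\mathfrak{R}\mathfrak{S}'$. Graded Serre duality $\Ext^i(\Lco,\mathfrak{S}\ocL)\cong\Ext^{-i}(\ocL,\Lco)^{*}$ together with the Ext computation of the preceding corollary forces the identification
\[
\mathfrak{S}\ocL\cong \Lco(-2\llrr)[2\rcl],
\]
using that $\Lco$ is the unique simple in the relevant weight space contributing to this Hom and that $\mathfrak{S}\ocL$ is determined by its Ext pairings with simples. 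Applying $\mathfrak{S}$ to $Q_\bullet$ thus produces an injective resolution of $\Lco(-2\llrr)[2\rcl]$; after shifting cohomologically and regrading, one obtains the required complex $M_\bullet\to\Lco$ of length $2\rcl$, with $M_{2\rcl}$ arising from $\mathfrak{S}(Q_0)=\mathfrak{S}(P^{\kappa_0}_{\Bi_\la})$ — the injective hull of $\ocL$ — and reducing to $\ocL(-2\llrr)$ after tracking the grading shifts.

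The subcategory assertion that $M_{2\rcl-j}$ lies in the subcategory generated by $S^{\kappa_j}_{\Bi'}$ follows because $\mathfrak{R}=\mathbb{B}_\tau^2$ (the nontrivial part of $\mathfrak{S}$) realizes the mutation reversing the semi-orthogonal decomposition by standards discussed at the end of Section \ref{sec:braiding}: $\mathfrak{S}$ sends the subcategory generated by $S^{\kappa_k}_{\Bi'}$ (where the top of $Q_k$'s standard filtration lives) to the subcategory generated by $S^{\kappa_{n-k}}_{\Bi'}$, which after reindexing produces $M_{n-k}$ in the required subcategory.

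The main obstacle will be making these Serre-functor identifications rigorous — in particular the equivalence $\mathfrak{S}\ocL\cong \Lco(-2\llrr)[2\rcl]$ and the precise compatibility of $\mathfrak{S}$ with the semi-orthogonal decomposition. An alternative path that bypasses the Serre functor is to construct $M_\bullet$ inductively: set $M_0=S^{\kappa_n}_{\Bi_\la}$ with its cosocle surjection onto $\Lco$, and at each stage extract the ``$S^{\kappa_{n-i}}$-component'' of the kernel using the semi-orthogonal decomposition to form $M_i$, then verify termination after $2\rcl$ steps at $M_{2\rcl}=\ocL(-2\llrr)$ by matching classes in $K_0(\cat^\bla)$: the identity $\sum_i (-1)^i [M_i]=[\Lco]$ expanded in the standard basis, together with the explicit form of the $R$-matrix action on invariants furnished by Corollary \ref{br-cat}, pins down the bottom term.
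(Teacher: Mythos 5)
There is a genuine gap in the Serre-duality route, and it is not fixable by "shifting and regrading."

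Applying $\mathfrak{S}$ to a projective resolution $Q_\bullet\to\ocL$ produces a complex of \emph{injectives} quasi-isomorphic to $\mathfrak{S}\ocL$; after shifting by $[-2\rcl]$ this is a complex in non-negative cohomological degrees, i.e.\ an injective \emph{coresolution} $\Lco(-2\llrr)\to\mathfrak{S}Q_{2\rcl}\to\cdots\to\mathfrak{S}Q_0$. That is the wrong shape of object: the proposition asks for a resolution $\cdots\to M_1\to M_0\to\Lco\to 0$ whose terms lie in the individual pieces of the semi-orthogonal decomposition generated by the $S^{\kappa_j}_\Bi$. Those pieces are neither the category of injectives nor (after applying $\star$) the category of projectives, and in general an indecomposable projective or injective is a nontrivial extension across several pieces. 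So the terms $\mathfrak{S}Q_j$ are simply not candidates for $M_j$. Worse, the specific identification fails outright: $\mathfrak{S}(Q_0)=\mathfrak{S}(P^{\kappa_0}_{\Bi_\la})$ is the injective hull $I^{\kappa_0}_{\Bi_\la}$ of $\ocL$, and $\ocL$ is not injective — indeed the very next corollary gives $\Ext^{2\rcl}(\Lco,\ocL)\cong\K(2\llrr)\neq 0$ — so no grading shift can make $I^{\kappa_0}_{\Bi_\la}$ equal $\ocL(-2\llrr)$.

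The paper's argument is much more elementary and does not pass through the Serre functor at all. It observes that, by the orthogonality $\Ext^i(S^{\kappa_j}_\Bi,(S^{\kappa_k}_{\Bi'})^\star)=0$ for $j\neq k$ or $i>0$ coming from the semi-orthogonal decomposition, the first bullet is equivalent to the vanishing of $\Ext^m(\Lco,(S^{\kappa_j}_\Bi)^\star)$ away from $m=2\rcl-j$, and that this is read off directly from the projective resolution of Proposition \ref{pro-res} because $\Hom(\Lco,(P^{\kappa_T}_{\Bi_T})^\star)\neq 0$ forces $\kappa_T(2)=2\rcl$. The second bullet is then the explicit computation $\Hom(M_{2\rcl},(S^{\kappa_0}_{\Bi_\la})^\star)\cong\Ext^{2\rcl}(\Lco,(S^{\kappa_0}_{\Bi_\la})^\star)\cong\K(-2\llrr)$, which pins down $M_{2\rcl}\cong\ocL(-2\llrr)$ since $\ocL$ is the unique simple generating the $\kappa_0$-piece in this weight. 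Your sketched "alternative path" via the SOD filtration is the correct direction, but the $K_0$ argument only identifies the class of $M_{2\rcl}$, not the module; you still need the $\Hom$ computation (or an equivalent argument) to conclude $M_{2\rcl}\cong\ocL(-2\llrr)$ as a module rather than merely in the Grothendieck group.
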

\begin{proof}
  Since we have
  \begin{equation*}
\Ext^i(S^{\kappa_j}_{\Bi},\left(S^{\kappa_k}_{\Bi'}\right)^{\!\star})=0 \quad \text{  if $j\neq k$ or $i>0$,}
\end{equation*}
the first property is equivalent to showing that $$\Ext^m\left(\Lco,
    \left(S^{\kappa_j}_{\Bi}\right)^{\!\star}\right)=0 \text{ if }
  m\neq j.$$ This follows immediately from replacing $S^{\kappa_j}_{\Bi}$ by its
  projective resolution defined in Proposition \ref{pro-res}.  

For the
  second, we must more carefully analyze this $\Ext$ group. By our
  projective resolution, we have
$$\Hom(M_{2\rcl},\left(S^{\kappa_0}_{\Bi_\la}\right)^{\!\star})\cong \Ext^{2\rcl}\left(\Lco,\left(S^{\kappa_0}_{\Bi_\la}\right)^{\!\star}\right)\cong \K(-2\llrr).$$  Thus, we must have $M_{2\rcl}\cong\ocL(-2\llrr)$.
\end{proof}
\begin{cor}
$\displaystyle{\Ext^i(\Lco,\ocL)=\begin{cases}0& i\neq 2\rcl\\ \K(2\llrr) & i=2\rcl\end{cases}.}$
\end{cor}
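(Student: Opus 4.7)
The approach is to mirror the proof of the previous corollary, but using the resolution of $\Lco$ from Proposition \ref{sta-res} in place of the projective resolution of $\ocL$ from Proposition \ref{pro-res}. Specifically, I would apply $\RHom(-, \ocL)$ to that resolution; the only term that should give a nontrivial contribution is the last one, $M_{2\rcl} \cong \ocL(-2\llrr)$, with all earlier terms killed.

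The crucial ingredient is that $\ocL = S^0_{\Bi_\la}$ is simple (by Proposition \ref{proj-irr}), and hence agrees with its own costandard at $(0, \Bi_\la)$ in the standardly stratified structure on $\cata^{\la,\la^*}$. The standard Ext-orthogonality $\Ext^i(\Delta(\mu), \nabla(\nu)) = \delta_{i,0}\delta_{\mu,\nu}\K$ between standards and costandards in a standardly stratified category then gives $\RHom(S^\kappa_\Bi, \ocL) = 0$ whenever $(\kappa, \Bi) \neq (0, \Bi_\la)$. For $j < 2\rcl$, Proposition \ref{sta-res} places $M_j$ in the subcategory generated by standards $S^{\kappa_{2\rcl-j}}_\Bi$ with $\kappa_{2\rcl-j}(2) > 0$; a standard induction along such a filtration using long exact sequences then yields $\RHom(M_j, \ocL) = 0$.

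For the final term, since $\ocL$ is simple we have $\RHom(\ocL(-2\llrr), \ocL) = \K(2\llrr)$ concentrated in cohomological degree $0$. Treating $M_\bullet$ as a derived representative of $\Lco$ in $D^-(\cata^{\la,\la^*})$ and totalising the double complex that computes $\RHom(M_\bullet, \ocL)$, all contributions except from $M_{2\rcl}$ vanish, leaving a single copy of $\K(2\llrr)$ placed in cohomological degree $2\rcl$. This yields $\Ext^{2\rcl}(\Lco, \ocL) \cong \K(2\llrr)$ and vanishing of Ext in all other degrees.

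The main obstacle is verifying the vanishing of $\RHom(M_j, \ocL)$ (not merely $\Hom(M_j, \ocL)$) for $j < 2\rcl$. This relies on two things: first, that each $M_j$ genuinely carries a standard filtration by the modules described in Proposition \ref{sta-res}, which should come from the inductive construction in its proof; and second, that $\ocL$ really behaves as a costandard and not just as a simple standard, so that the Ext-orthogonality applies to kill all higher Ext, not just $\Hom$. Both points should be extractable from the structure already established in \cite{CTP}.
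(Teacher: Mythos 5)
Your approach is correct and is precisely how the paper intends the corollary to be read: the statement is placed immediately after Proposition \ref{sta-res} and is meant to follow from it by applying $\RHom(-, \ocL)$ to the resolution $M_\bullet\to\Lco$.

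Two remarks. First, the ``main obstacle'' you flag is already handled inside the paper's proof of Proposition \ref{sta-res}: the opening line there records the full higher-degree orthogonality $\Ext^i\bigl(S^{\kappa_j}_{\Bi},(S^{\kappa_k}_{\Bi'})^{\star}\bigr)=0$ for $j\neq k$ or $i>0$, which (with $k=0$, $\Bi'=\Bi_\la$, so that $(S^{\kappa_0}_{\Bi_\la})^\star=\ocL$ since $\ocL$ is a self-dual simple) is exactly the statement needed to kill $\RHom(M_p,\ocL)$ for $p<2\rcl$ and to concentrate $\RHom(M_{2\rcl},\ocL)$ in degree $0$. Be a little careful with your parenthetical justification ``since $\ocL$ is simple we have $\RHom(\ocL,\ocL)=\K$'' --- simplicity alone does not kill higher self-$\Ext$; the correct reason, which you do state elsewhere, is that $\ocL$ is simultaneously standard and costandard, so $\Ext^{>0}(\Delta,\nabla)=0$ applies. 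Second, there is a one-line alternative proof worth noting: both $\Lco$ and $\ocL$ are self-dual (the former is stated in the proof of Proposition \ref{coev}, the latter because it is the simple attached to a stringy sequence), so $\Ext^i(\Lco,\ocL)\cong\Ext^i(\ocL^\star,\Lco^\star)=\Ext^i(\ocL,\Lco)$, and the corollary follows directly from the corollary preceding Proposition \ref{sta-res} without invoking the resolution at all.
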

\begin{cor}\label{LM}
$\displaystyle{\operatorname{Tor}^i(\ocL,\dot{L}_\la)=\begin{cases}0& i\neq 2\rcl\\ \K(-2\llrr) & i=2\rcl\end{cases}.}$
\end{cor}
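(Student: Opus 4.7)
The plan is to derive the claim from the $\Ext$ computation in the preceding corollary via the standard Tor-Ext duality for modules over a finite-dimensional graded algebra carrying the anti-involution $\dot{}$. Concretely, for left $\alg^{\la,\la^*}$-modules $M, N$, one has a natural graded isomorphism
$$\Tor^i(M,\dot N)^\star \;\cong\; \Ext^i(N,M^\star),$$
obtained as the cohomology of the $\K$-linear duality $\Hom_\K(M\otimes_{\alg}\dot N,\K) \cong \Hom_{\alg}(N,M^\star)$ applied to a projective resolution. Here $\star$ is the graded dual with module structure pulled back along $\dot{}$, and the isomorphism reverses the internal grading while preserving the cohomological one.

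I would apply this with $M=\ocL$ and $N=\Lco$. Since $\ocL$ is a simple module placed in its natural grading (with one-dimensional cosocle in degree zero), it is graded self-dual, so $\ocL^\star\cong\ocL$, and the displayed isomorphism becomes $\Tor^i(\ocL,\dot\Lco)^\star\cong\Ext^i(\Lco,\ocL)$. By the preceding corollary the right-hand side is $\K(2\llrr)$ concentrated in cohomological degree $2\rcl$, and dualizing sends $\K(2\llrr)$ to $\K(-2\llrr)$, yielding the claimed formula.

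The principal obstacle is verifying the self-duality $\ocL^\star\cong\ocL$ and matching grading conventions in the duality precisely; the paper's normalization of $\ocL$ as the cosocle of $S^0_{\Bi_\la}$ (itself a simple standard) should make this essentially immediate, but it requires care. A hands-on alternative, parallel to the proof of the preceding corollary but on the tensor-product side, is to apply $-\otimes^L_{\alg}\dot\Lco$ to the projective resolution of $\ocL=S^0_{\Bi_\la}$ provided by Proposition~\ref{pro-res}. Each term becomes $\bigoplus_{|T|=m}(e_{\Bi_T,\kappa_T}\Lco)(\chi_T)$, and the computation then reduces to the same combinatorial identification $\chi_{[1,2\rcl]}=-2\llrr$ used in the preceding corollary, with the self-duality of $\Lco$ ensuring that after taking cohomology only the top term survives.
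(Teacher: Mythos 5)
Your argument is correct and is the natural one the paper leaves implicit: dualize the just-established $\Ext$ computation via the tensor-hom adjunction, using the fact that $\star$ reverses internal degree. One small optimization worth noting: rather than relying on self-duality of $\ocL$ (which, as you flag, needs a moment's care), you can run the duality the other way, $\Tor^i(\ocL,\dot\Lco)^\star \cong \Ext^i(\ocL,\Lco^\star)$, and then appeal to the self-duality $\Lco^\star\cong\Lco$, which the paper asserts explicitly in the proof of Proposition~\ref{coev}. That routes the computation through the first of the two $\Ext$ corollaries (the one with a written proof via Proposition~\ref{pro-res}) instead of the second, and sidesteps the need to normalize $\ocL^\star$. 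Your hands-on alternative --- tensoring the explicit projective resolution of $\ocL = S^0_{\Bi_\la}$ from Proposition~\ref{pro-res} against $\dot\Lco$ and observing that only the top term $P^{(0,2\rcl)}_{\Bi_\la}(-2\llrr)$ contributes --- is equally valid and is the cleanest way to confirm the grading conventions if one is worried about sign/shift bookkeeping in the duality.
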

\excise{\begin{cor}
$\displaystyle{\Ext^{4\rcl}(\Lco,\Lco)\cong \K(-4\llrr)}$.
\end{cor}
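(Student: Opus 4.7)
The plan is to read off $\Ext^{4\rcl}(\Lco,\Lco)$ from the top of the resolution of $\Lco$ constructed in Proposition~\ref{sta-res}, combined with the value $\Ext^{2\rcl}(\ocL,\Lco)\cong\K(2\llrr)$ from the preceding corollary. Applying $\RHom(-,\Lco)$ to
\[
0\longrightarrow M_{2\rcl}\longrightarrow\cdots\longrightarrow M_0\longrightarrow\Lco\longrightarrow 0
\]
yields a strongly convergent spectral sequence
\[
E_1^{p,q}=\Ext^q(M_p,\Lco)\ \Longrightarrow\ \Ext^{p+q}(\Lco,\Lco),
\]
since the filtration is finite.

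The key input is a projective-dimension bound. Since $M_p$ lies in the subcategory generated by the standards $S^{\kappa_{2\rcl-p}}_{\Bi}$, and each such standard has a projective resolution of length at most $2\rcl-(2\rcl-p)=p$ by Proposition~\ref{pro-res}, we conclude that $\Ext^q(M_p,\Lco)=0$ whenever $q>p$. Restricting to the diagonal $p+q=4\rcl$, every term with $p<2\rcl$ automatically vanishes (the constraint $q=4\rcl-p>p$ is forced), and only the corner $(p,q)=(2\rcl,2\rcl)$ can possibly survive.

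Next I would evaluate the corner. Since $M_{2\rcl}\cong\ocL(-2\llrr)$, the preceding corollary combined with the graded-$\Hom$ shift identity yields $E_1^{2\rcl,2\rcl}\cong\K(-4\llrr)$. Finally, one checks that this corner survives to $E_\infty$: outgoing differentials vanish because the resolution terminates at $M_{2\rcl}$, so $M_p=0$ for $p>2\rcl$; and incoming differentials $d_r\colon E_r^{2\rcl-r,2\rcl+r-1}\to E_r^{2\rcl,2\rcl}$ have source $\Ext^{2\rcl+r-1}(M_{2\rcl-r},\Lco)$ which vanishes by the projective-dimension bound, since $2\rcl+r-1>2\rcl-r$ for all $r\ge 1$.

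The main obstacle is the careful bookkeeping of grading shifts at the surviving corner: one must verify that the shift $(-2\llrr)$ built into $M_{2\rcl}$ combines with the shift $(2\llrr)$ of $\Ext^{2\rcl}(\ocL,\Lco)$ to produce the claimed $(-4\llrr)$ rather than $(+4\llrr)$, using the conventions for graded $\RHom$ and the identification of $\ocL$ with its cosocle grading inherited from $S^0_{\Bi_\la}$ (as opposed to its $\star$-self-dual canonical grading).
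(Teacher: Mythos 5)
Your spectral-sequence skeleton is surely the intended argument (the paper itself supplies no proof of this corollary --- the statement sits inside an \texttt{excise} block in the source --- so there is nothing to compare against), and the structural steps are all correct: in weight zero the number of black strands is $n=2\rcl$, so Proposition \ref{pro-res} bounds the projective dimension of $S^{\kappa_{2\rcl-p}}_{\Bi}$, and hence of $M_p$, by $p$; this kills every $E_1$-term on the diagonal $p+q=4\rcl$ except the corner $(2\rcl,2\rcl)$, and your analysis of the incoming and outgoing differentials there is right.

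The genuine gap is the one you flagged and then deferred: the grading shift at the corner does \emph{not} come out to $(-4\llrr)$ under the conventions the surrounding results force. The paper's proof of $\Ext^{2\rcl}(\ocL,\Lco)\cong\K(2\llrr)$ identifies this group with $\Hom\big(P^{(0,2\rcl)}_{\Bi_\la}(-2\llrr),\Lco\big)$ and gets $\K(2\llrr)$, so the convention in force is $\Hom(M(a),N)\cong\Hom(M,N)(-a)$; the same convention is what makes $\Ext^{2\rcl}(\Lco,\ocL)\cong\Hom(\ocL(-2\llrr),\ocL)\cong\K(2\llrr)$ consistent with Proposition \ref{sta-res}. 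Applied at your corner it gives $E_1^{2\rcl,2\rcl}=\Ext^{2\rcl}(\ocL(-2\llrr),\Lco)\cong\Ext^{2\rcl}(\ocL,\Lco)(2\llrr)\cong\K(4\llrr)$: the two shifts of $2\llrr$ add with the \emph{same} sign rather than cancelling or reversing. So the argument as you have set it up proves $\Ext^{4\rcl}(\Lco,\Lco)\cong\K(4\llrr)$, and no bookkeeping within these conventions will turn that into the stated $(-4\llrr)$. You should either conclude $\K(4\llrr)$ and record the discrepancy with the statement (plausibly the reason it was excised), or exhibit explicitly a convention under which $(-2\llrr)$ and $(2\llrr)$ combine to $(-4\llrr)$; asserting that they do is the one step of your proof that cannot be completed as written.
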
}

\subsection{Ribbon structure}
\label{sec:ribbon}

This calculation is also important for showing how $\Lco$ behaves under braiding
\begin{prop}\label{Lco-bra}
$\mathbb{B}_{\sigma_1}\Lco\cong L_{\la^*}[-2\rcl](-2\llrr-\lllr)$.
\end{prop}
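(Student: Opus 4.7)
The plan has two stages. First, pin down the class of $\mathbb{B}_{\sigma_1}\Lco$ in $K_0$. By Proposition \ref{bra-commute}, $\mathbb{B}_{\sigma_1}$ commutes with each $\fE_i$, and since $\fE_i\Lco=0$ for all $i$ (established in the proof of Proposition \ref{coev}), every cohomology of $\mathbb{B}_{\sigma_1}\Lco$ is annihilated by all $\fE_i$. The $U_q^-(\fg)$-invariant subspace of $V_{\la^*}\otimes V_\la$ is one-dimensional and spanned by $[L_{\la^*}]$, so $[\mathbb{B}_{\sigma_1}\Lco]$ is a scalar multiple of $[L_{\la^*}]$. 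To compute this scalar, I apply the braiding to $[\Lco]$ and project onto the ``low-high'' subspace of $V_{\la^*}\otimes V_\la$: since the quasi-R-matrix $\Theta^{-1}$ acts by lowering the first tensor factor, the only contribution to this projection comes from the ``high-low'' piece of $[\Lco]$, which the proof of Proposition \ref{coev} identifies as $(-1)^{2\rcl}q^{-2\llrr}v_h\otimes v_l$. Applying the diagonal operator $A$ (which contributes $q^{-\lllr}$ since $v_h\otimes v_l$ has weights $(\la,-\la)$) and then the flip yields the scalar $(-1)^{2\rcl}q^{-2\llrr-\lllr}$, which matches the $K_0$-class of $L_{\la^*}[-2\rcl](-2\llrr-\lllr)$.

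Second, establish cohomology concentration in a single degree. Resolve $\Lco$ using Proposition \ref{sta-res}: the resolution has bottom term $M_{2\rcl}\cong\ocL(-2\llrr)$ and intermediate $M_j$'s in prescribed standard subcategories. By Lemma \ref{pro-sta}, $\mathbb{B}_{\sigma_1}$ sends each $M_j$ to an honest module (with standard filtration), so the termwise image $\mathbb{B}_{\sigma_1}M_\bullet$ is a genuine complex of modules whose total cohomology is $\mathbb{B}_{\sigma_1}\Lco$. Proposition \ref{sta-braid} directly gives $\mathbb{B}_{\sigma_1}(P^{(0,2\rcl)}_{\Bi_\la})\cong S^0_{\Bi_\la}(-\lllr)$ in $\cata^{\la^*,\la}$ for the top term, while an analogous argument --- pulling strands through the red crossing as allowed by Proposition \ref{minimal-basis} --- evaluates the bottom term $\mathbb{B}_{\sigma_1}(\ocL(-2\llrr))$ as $P^{(0,2\rcl)}_{\Bi_\la}(-2\llrr-\lllr)$ in $\cata^{\la^*,\la}$.

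The main obstacle is verifying that the cohomology of $\mathbb{B}_{\sigma_1}M_\bullet$ is concentrated in exactly one degree. To handle this, I combine the previous two ingredients: each cohomology group is a grading-shift of $L_{\la^*}$ (by the first stage), and the alternating sum of their graded Euler characteristics must equal $(-1)^{2\rcl}q^{-2\llrr-\lllr}$. The structural data of Proposition \ref{sta-res} (together with the grading bounds from Proposition \ref{sta-braid}) restricts the possible grading shifts in each homological degree on each side of the complex, and the only configuration consistent with both the Euler characteristic and these bounds is a single cohomology $L_{\la^*}(-2\llrr-\lllr)$ sitting in homological degree $-2\rcl$. This yields the claimed isomorphism $\mathbb{B}_{\sigma_1}\Lco\cong L_{\la^*}[-2\rcl](-2\llrr-\lllr)$.
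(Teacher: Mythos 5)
Your first stage is sound and essentially reproduces the decategorified computation the paper already carries out in Corollary \ref{br-cat} and Proposition \ref{coev}: $[\mathbb{B}_{\sigma_1}\Lco]$ is a weight-zero class killed by all $E_i$, hence a multiple of $[L_{\la^*}]$, and the scalar $(-1)^{2\rcl}q^{-2\llrr-\lllr}$ is correctly read off from the high-low component of $[\Lco]$ together with the operator $A$. The second stage is where the argument breaks down. Knowing the Euler characteristic, plus the fact that every composition factor of every $H^j(\mathbb{B}_{\sigma_1}M_\bullet)$ is a shift of $L_{\la^*}$, does not force concentration in one homological degree: a copy of $L_{\la^*}(a)$ in $H^{-j}$ and another in $H^{-j-1}$ cancel in $K_0$, so the alternating sum being a single monomial is consistent with cohomology spread over many degrees. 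You acknowledge this and appeal to unspecified ``grading bounds'' coming from Propositions \ref{sta-braid} and \ref{sta-res}, but you never state them, and I do not see how to extract them: the intermediate terms $M_j$ of the resolution are only known to lie in certain pieces of the semi-orthogonal decomposition, not computed explicitly, so the multiplicities of $q^aL_{\la^*}$ in $\mathbb{B}_{\sigma_1}M_j$ are not under control. This concentration step is the actual content of the proposition, and it is missing.

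The paper's proof avoids the resolution entirely. It evaluates $\mathbb{B}_{\sigma_1}\Lco$ idempotent by idempotent: $\mathbb{B}\Lco\, e(\Bi,\kappa_j)\cong \Lco\overset{L}\otimes\bra\overset{L}\otimes\dot P^{\kappa_j}_{\Bi}$, and for every $(\Bi,\kappa_j)$ other than the extremal one the right-hand factor lies in the image of some $\fF_i$ and is therefore killed by $\Lco$ (since $\fE_i\Lco=0$). The single surviving idempotent gives $\Lco\overset{L}\otimes\dot M_\la(-\lllr)\cong\K[-2\rcl](-2\llrr-\lllr)$, which is Corollary \ref{LM}. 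Because the total cohomology is one-dimensional, concentration in one homological degree and simplicity of the resulting module come for free. If you want to rescue your approach, replace the Euler-characteristic argument by a genuine dimension count of the total cohomology --- which is exactly what Corollary \ref{LM} supplies --- rather than trying to constrain the individual $H^j$ by grading considerations.
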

\begin{proof}
Unless $\Bi$ is a sequence corresponding to weight 0 and $j=\llrr$, we have that  $\bra\overset{L}\otimes \dot P^{\kappa_j}_{\Bi}$ is of the form $\fF_i(\bra\overset{L}\otimes \dot P^{\kappa_j}_{\Bi'})$ for a shorter sequence $\Bi'$. Thus, $\bra\overset{L}\otimes \dot P^{\kappa_j}_{\Bi}$ has a projective resolution in which $P^{\kappa_{\llrr}}_{\Bi}$ never appears, and $$\mathbb{B}\Lco e(\Bi,\kappa_j)\cong \Lco\overset{L}\otimes \bra\overset{L}\otimes \dot P^{\kappa_j}_{\Bi}\cong 0.$$
Thus, we have an isomorphism of  vector spaces $$\mathbb{B}\Lco e(\Bi_\la)\cong \Lco\overset{L}\otimes \bra\overset{L}\otimes\dot P^{\kappa_{\llrr}}_{\Bi_\la}\cong \Lco\overset{L}\otimes \dot M_\la(-\lllr)\cong \K[-2\rcl](-2\llrr-\lllr).$$  As a $\alg^{\la^*,\la}$ representation, $\mathbb{B}\Lco$ must be simple, and thus \begin{equation*}
\mathbb{B}\Lco\cong L_{\la^*}[-2\rcl](-\lllr-2\llrr).\qedhere
\end{equation*}
\end{proof}

Now, in order to define quantum knot invariants, we must also have have quantum trace and cotrace maps, which can only be defined after one has chosen a ribbon structure.  The Hopf algebra $U_q(\fg)$ does not have a unique ribbon structure; in fact topological ribbon elements form a torsor over the characters  $\wela/\rola\to\{\pm 1\}$.  Essentially, this action is by multiplying quantum dimension by the value of the character.  

The standard convention is to choose the ribbon element so that all
quantum dimensions are Laurent polynomials in $q$ with positive
coefficients; however, the calculation above shows that this choice is
not compatible with our categorification! By Proposition
\ref{Lco-bra}, we
have $$\mathbb{B}^2\Lco=\Lco[-4\rcl](-4\llrr-2\lllr).$$ Thus, if we
wish to define a ribbon functor $\mathbb{R}$ to satisfy the
equations $$\mathbb{B}^2\Lco\cong
\mathbb{R}_1^{-2}\Lco=\mathbb{R}_2^{-2}\Lco=\mathbb{R}_1^{-1}\mathbb{R}_{2}^{-1}\Lco,$$
which are necessary for topological invariance (as we depict in Figure
\ref{Lco-inv}).
\begin{defn}
  The {\bf ribbon functor} $\mathbb{R}_i$ is defined by
  \[\mathbb{R}_iM=M[2\rho^\vee(\la_i)](2\langle\la_i,\rho\rangle+\langle\la_i,\la_i
  \rangle).\]
\end{defn}

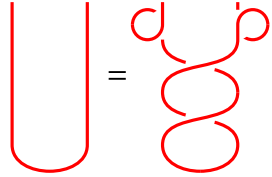
\begin{figure}[ht]
\begin{tikzpicture}[wei, very thick, scale=-1]
\draw (0,0) to node[at end,inner sep=4pt,fill=white,circle]{} (0,.3) to[out=90,in=0] (-.2,.5) to[out=180,in=90] (-.4,.3);
\draw (0,1.2) to[out=-90,in=90]  node[midway,inner sep=4pt,fill=white,circle]{}(1,.5) to node[at end,inner sep=4pt,fill=white,circle]{} (1,.3) to[out=-90,in=180] (1.2,.1) to[out=0,in=-90] (1.4,.3);
\draw (-.4,.3) to[out=-90,in=180] (-.2,.1) to[out=0,in=-90] (0,.3) to[out=90,in=-90] (0,.5) to[out=90,in=-90] (1,1.2) to[out=90,in=-90] node[midway,inner sep=4pt,fill=white,circle]{} (0,1.9) to[out=90,in=180] (.5,2.25);
\draw (.5,2.25) to[out=0,in=90] (1,1.9)[out=-90,in=90] to (0,1.2);
\draw (1.4,.3) to[out=90,in=0] (1.2,.5) to[out=180,in=90] (1,.3) to[out=-90,in=90] (1,0);
\node[black] at (1.6,1){=};
\draw (2,0) to (2,1.9)  to[out=90,in=180] (2.5,2.25) to[out=0,in=90] (3,1.9) to [out=-90,in=90] (3,0) ;
\end{tikzpicture}
\caption{The compatibility of double twist and the ribbon element.}
\label{Lco-inv}
\end{figure}

Taking Grothendieck group, we see that we obtain the ribbon element in
$U_q(\fg)$ uniquely determined by the fact that it acts on the simple
representation of highest weight $\la$ by
$(-1)^{2\rcl}q^{\lllr+2\llrr}$.  This is the inverse  of the ribbon element constructed by
Snyder and Tingley in \cite{STtwist}; we must take inverse because
Snyder and Tingley use the opposite choice of coproduct from ours.  See Theorem 4.6 of that paper
for a proof that this is a ribbon element.  From now on, we will term
this the {\bf ST ribbon element}.  It may seem strange that this
element seems more natural from the perspective of categorification
than the standard ribbon element, but it is perhaps not so surprising;
the ST ribbon element is closely connected to the braid group action
on the quantum group, which also played an important role in Chuang
and Rouquier's early investigations on categorifying $\mathfrak{sl}_2$
in \cite{CR04}.  It is not surprising at all that we are forced into a
choice, since ribbon structures depend on the ambiguity of taking a
square root; while numbers always have 2 or 0 square roots in any
given field (of characteristic $\neq 2$), a functor will often
only have one.

Due to the extra trouble of drawing ribbons, we will draw
all pictures in the blackboard framing.

This different choice of ribbon element will not seriously affect our topological invariants; we simply multiply the invariants from the standard ribbon structure by a sign depending on the framing of our link and the Frobenius-Schur indicator of the label, as we describe precisely in Proposition \ref{schur-indicate}.

\begin{figure}[ht]
\begin{tikzpicture}[wei,very thick]
  \end{tikzpicture}
\begin{tikzpicture}[wei,very thick,scale=1.5,red]

\draw (0,-.3) to[out=90,in=0] node[pos=.57,inner sep=3pt,fill=white,circle]{} (-.5,.5) to[out=180,in=180] (-.5,0) ;
\draw (0,1) to[out=-90,in=135] node[pos=.6,inner sep=3pt,fill=white,circle]{}  (.7,.5) to[in=90,out=-45] (.8,-.3);
\draw[red] (-.5,0) to[out=0,in=-135,<-] (0,.5) to[out=45,in=-90] (.7,1) ; \draw (.7,1) to[out=90,in=90,<-](0,1);
\node[black] at (1.2,.5){=} ;
\draw[red] (1.6,-.3) to[out=90,in=180] (2,1);\draw (2,1) to [out=0,in=90,<-] (2.4,-.3);
  \end{tikzpicture}
\caption{Changing the orientation of a cap}
\label{reverse}
\end{figure}
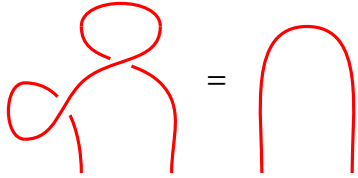

\begin{prop}\label{trace}
The quantum trace and cotrace for the ST ribbon structure are categorified by the functors
$$\mathbb{C}^{\la^*,\la}_{\emptyset}\colon \Dbe(\mathsf{gVect})\to \cat^{\la^*,\la}\text{
given by }\RHom( \dot{L}_{\la^*},-)(2\llrr)[-2\rcl]$$ \centerline{and} $$\mathbb{T}_{\la,\la^*}^{\emptyset}\colon \cat^{\la,\la^*}\to \Dbe(\mathsf{gVect})\text{ given by }-\otimes_{\alg^\bla}\dot{L}_\la.$$
\end{prop}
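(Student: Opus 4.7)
The plan is to mirror the argument of Proposition~\ref{coev}, applied now to the reversed ordering $(\la^*,\la)$ and with the modifications forced by the passage to the ST ribbon element. I would first identify $\mathbb{C}^{\la^*,\la}_\emptyset(\K)$ with a shifted simple module whose class is invariant, then pin down the normalization by projecting onto a single extremal weight component, and finally argue symmetrically for $\mathbb{T}^{\emptyset}_{\la,\la^*}$.

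For the cotrace, note that $\mathbb{C}^{\la^*,\la}_\emptyset(\K) = \RHom_\K(\dot L_{\la^*},\K)(2\llrr)[-2\rcl]$ is, up to shift, the simple module $L_{\la^*}\in\cata^{\la^*,\la}$, the direct analogue in the swapped category of $\Lco\in\cata^{\la,\la^*}$. Re-running the argument of Proposition~\ref{coev} on the reversed diagram gives $\fE_i L_{\la^*}=0$ for every $i$, so $[L_{\la^*}]$ lies in the one-dimensional invariant subspace of $V_{\la^*}\otimes V_\la$ and is therefore a scalar multiple of the canonical cotrace vector. I would fix that scalar by computing one extremal projection: the class of $L_{\la^*}$ projects onto $(V_{\la^*})_{high}\otimes(V_\la)_{low}$ as a standardization of $P^0_{\Bi^{\la^*}}$, giving (up to grading) $v_{h^*}\otimes F_{\Bi^{\la^*}}v_h$. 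Invariance together with the antipode formula $S(F_i) = -F_i\tilde K_i$ then forces the opposite extremal component to carry the sign $(-1)^{2\rcl}$ and the monomial $q^{-2\llrr}$. Combined with the shift $(2\llrr)[-2\rcl]$ built into the definition of $\mathbb{C}$, these factors reproduce precisely the discrepancy $(-1)^{2\rcl}q^{\lllr+2\llrr}$ between the standard and ST ribbon elements acting on $V_\la$.

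The trace is handled symmetrically: the functor $-\otimes_{\alg^{\la,\la^*}}\dot L_\la$ vanishes on any module of the form $\fF_i N$ by a Tor-computation dual to Corollary~\ref{LM}, hence the induced functional on $V_\la\otimes V_{\la^*}$ is a scalar multiple of the ST quantum trace. I would pin down the scalar by applying the functor to a single explicit projective where the computation is transparent (for instance $P^{(0,2\rcl)}_{\Bi_\la}$), paralleling the final step of Proposition~\ref{coev}.

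The main obstacle throughout will be bookkeeping. One must carefully track the sign contributed by the homological shift $[-2\rcl]$ (via its Euler-characteristic contribution of $(-1)^{2\rcl}$), the grading shift $(2\llrr)$, and the antipode sign to confirm that the combined data yield the ST ribbon structure rather than the standard one. Modulo this accounting, the proof reduces to the invariance-plus-normalization template of Proposition~\ref{coev}.
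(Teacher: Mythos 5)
Your proposal takes a genuinely different route from the paper, and the route you choose has a concrete gap at precisely the point that matters.

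The paper's proof is short and structural: by the relation in Figure~\ref{reverse}, the quantum trace factors as a negative ribbon twist, then a positive braiding, then the evaluation. Each of these is already categorified, so $\mathbb{T}^{\emptyset}_{\la,\la^*}$ becomes the composite $\mathbb{E}\circ\mathbb{B}\circ\mathbb{R}_1$, and the computation reduces to Proposition~\ref{Lco-bra} (which identifies $\mathbb{B}_{\sigma_1}\Lco$ up to shift) together with the definition of $\mathbb{R}_i$; the cotrace follows by adjunction. The ST ribbon element enters only through $\mathbb{R}_1$, whose normalization was already pinned down in Section~\ref{sec:ribbon}. Your approach instead re-runs the invariance-plus-normalization template of Proposition~\ref{coev} from scratch, which avoids Proposition~\ref{Lco-bra} but has to redo exactly the arithmetic that makes the ST choice appear.

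That is where your sketch breaks. You list the antipode contribution $(-1)^{2\rcl}q^{-2\llrr}$, combine it with the shift $(2\llrr)[-2\rcl]$, and assert that this ``reproduces precisely the discrepancy $(-1)^{2\rcl}q^{\lllr+2\llrr}$ between the standard and ST ribbon elements.'' Two things are wrong here. First, the discrepancy between the two ribbon elements acting on $V_\la$ is only the sign $(-1)^{2\rcl}$; the expression $(-1)^{2\rcl}q^{\lllr+2\llrr}$ is the full value of the ST ribbon element, not the ratio to the standard one. Second, the factors you actually wrote cancel: $[-2\rcl]$ contributes $(-1)^{-2\rcl}$ in $K_0$ and $(2\llrr)$ contributes $q^{2\llrr}$, which against $(-1)^{2\rcl}q^{-2\llrr}$ gives $1$, with no $q^{\lllr}$ appearing anywhere in your accounting. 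Nothing in your outline produces the $q^{\lllr}$, which in the paper's argument comes from the braiding step that you have omitted. So the ``bookkeeping'' you defer is not bookkeeping — it is the content, and as written the factors do not match. (There is also a minor muddle in the extremal projection: the cheap one to compute for $L_{\la^*}\in\cata^{\la^*,\la}$ is $(V_{\la^*})_{low}\otimes(V_\la)_{high}$, realized by $P^{(0,\ast)}_{\Bi^{\la^*}}$, whereas you wrote the opposite extremal component as the one ``given by a standardization of $P^0_{\Bi^{\la^*}}$.'') If you want to pursue the direct route, you should state explicitly the formula for the ST quantum (co)trace in terms of the evaluation, the braiding, and the ribbon element, and verify the normalization against that; at that point you will essentially have reproved Proposition~\ref{Lco-bra}, which is what the paper's composite-functor argument avoids.
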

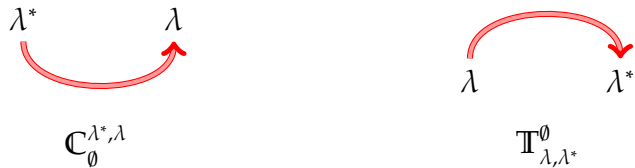
\begin{figure}
\begin{tikzpicture}
\node [label=below:{$\mathbb{C}^{\la^*,\la}_{\emptyset}$}] at (-3,0){
\begin{tikzpicture}
\draw[wei,->] (-1,0) to[out=-90,in=-90] node[at start, above]{$\la^*$}
node [at end, above]{$\la$} (1,0);
\end{tikzpicture}
};
\node [label=below:{$\mathbb{T}_{\la,\la^*}^{\emptyset}$}] at (3,0){
\begin{tikzpicture}
\draw[wei,->] (-1,0) to[out=90,in=90] node[at start, below]{$\la$}
node [at end, below]{$\la^*$} (1,0);
\end{tikzpicture}
};
\end{tikzpicture}
\caption{Pictures for the quantum (co)trace.}
\label{q-tr}
\end{figure}

\begin{proof}
As the picture Figure \ref{reverse} suggests, by definition the quantum trace is given by applying a negative ribbon twist of one strand, and then applying a positive braiding, followed by  the evaluation; that is, it is categorified by \[(\mathbb{B}\mathbb{R}_1-)\otimes \dot{L}_\la\cong -\otimes (\mathbb{B}\mathbb{R}_1\dot{L}_\la)\cong -\otimes \dot{L}_\la.\]  The result thus immediately follows from Proposition \ref{Lco-bra}, and our definition of $\mathbb{R}$.  The same relation between evaluation and quantum trace follows from adjunction.
\end{proof}

\subsection{Coevaluation and quantum trace in general}

More generally, whenever we are presented with a sequence $\bla$ and a dominant weight $\mu$, we wish to have a functor relating the categories $\bla$ and $\bla^+=(\la_1,\dots,\la_{j-1},\mu,\mu^*,\la_{j},\dots,\la_\ell)$.  This will be given by left tensor product with a particular bimodule.

The coevaluation bimodule $\coe^{\bla^+}_{\bla}$ is generated by the diagrams of the form
\begin{equation*}
  \begin{tikzpicture}[very thick,xscale=1.4,yscale=-1.4]

\node (v) at (0,-1) [fill=white!80!gray,draw=white!80!gray, thick,rectangle,inner xsep=10pt,inner ysep=6pt, outer sep=-2pt] {$v$};

\begin{pgfonlayer}{background} \begin{scope}[very thick]
\draw[wei] (-4.5,-1) -- +(0,2) node[at start,above]{$\la_1$} node[at end,below]{$\la_1$};
    \draw (-3.75,-1) -- +(0,2) node[at start,above]{$i$} node[at end,below]{$i$};
    \node at (-3,0){$\cdots$};
    \draw[wei] (v.170) to[in=280,out=170] node[at end, below]{$\mu$} (-2.5,1);
\draw[wei] (v.10) to[out=10,in=260] node[at end, below]{$\mu^*$} (2.5,1) ;
  \node at (3,0){$\cdots$};
    \draw[wei] (4.5,-1) -- +(0,2) node[at start,above]{$\la_\ell$} node[at end,below]{$\la_\ell$};
    \draw (3.75,-1) -- +(0,2) node[at start,above]{$j$} node[at end,below]{$j$};
    \draw (v.55) to[in=270,out=55] node[below,at end]{$i_k$} (2.1,1);
    \draw (v.65) to[in=270,out=65] node[below,at end]{$i_k$}(1.7,1);
    \draw (v.75) to[in=270,out=75] node[below,at end]{$i_k$} (1.3,1) ;
    \draw (v.125) to[in=270,out=125]node[below,at end]{$i_1$}  (-2.1,1) ;
    \draw (v.115) to[in=270,out=115] node[below,at end]{$i_1$} (-1.7,1);
    \draw (v.105) to[in=270,out=105] node[below,at end]{$i_1$} (-1.3,1);
    \draw[ultra thick,loosely dotted,-] (-.35,.5) -- (.35,.5);
\draw[ultra thick,loosely dotted,-] (-.35,1.4) -- (.35,1.4);
    \draw[decorate,decoration=brace,-] (-.8,1.5) --
    node[below,midway]{$\mu^{i_1}$} (-2.2,1.5);
    \draw[decorate,decoration=brace,-] (2.2,1.5) --
    node[below,midway]{$(s_{k-1}\mu)^{i_k}$} (.8,1.5) ;\end{scope}
\end{pgfonlayer}
  \end{tikzpicture}
\end{equation*}
where $v$ is an element of $\Lco$ and diagrams only involving the
strands between $\mu$ and $\mu^*$ act in the obvious way.  We impose
the relations: 

\begin{equation*}
   \begin{tikzpicture}[very thick,yscale=-1]
\node (v) at (0,-1) [fill=white!80!gray,draw=white!80!gray, thick,rectangle,inner xsep=10pt,inner ysep=6pt, outer sep=-2pt] {$v$};
\begin{pgfonlayer}{background} \begin{scope}[very thick]
    \draw[wei] (v.170) to[in=280,out=170] node[at end, below]{$\mu$} (-2.5,1);
\draw[wei] (v.10) to[out=10,in=260] node[at end, below]{$\mu^*$}(2.5,1) ;
    \draw (-3,-2.2) to[in=190,out=90] node[at start,above]{$j$} (0,0)
    to[out=10,in=-90] node[at end,below]{$j$} (3,1)  ;
    \draw (v.55) to[in=270,out=55] node[below,at end]{}(2.1,1);
    \draw (v.65) to[in=270,out=65] node[below,at end]{} (1.7,1);
    \draw (v.75) to[in=270,out=75] node[below,at end]{} (1.3,1);
    \draw (v.125) to[in=270,out=125] node[below,at end]{} (-2.1,1);
    \draw (v.115) to[in=270,out=115] node[below,at end]{} (-1.7,1);
    \draw (v.105) to[in=270,out=105] node[below,at end]{}  (-1.3,1);
    \draw[ultra thick,loosely dotted,-] (-.35,.5) -- (.35,.5);
\end{scope}
\end{pgfonlayer}
\node at (3.5,0){=};
  \end{tikzpicture}
    \begin{tikzpicture}[very thick,yscale=-1]
\node at (-3.5,0) {$(-1)^{g_j}\displaystyle\prod_{i\neq j}t_{ij}^{g_i}$};
\node (v) at (0,-1) [fill=white!80!gray,draw=white!80!gray, thick,rectangle,inner xsep=10pt,inner ysep=6pt, outer sep=-2pt] {$v$};
\begin{pgfonlayer}{background} \begin{scope}[very thick]

    \draw[wei] (v.170) to[in=280,out=170] node[at end, below]{$\mu$} (-2.5,1);
\draw[wei] (v.10) to[out=10,in=260] node[at end, below]{$\mu^*$} (2.5,1);

    \draw  (-3,-2.2) to[in=190,out=60] node[at start,above]{$j$}
    (0,-1.6) to[out=10,in=-90]node[at end,below]{$j$}  (3,1) ;
    \draw (v.55) to[in=270,out=55] (2.1,1) node[below,at end]{};
    \draw (v.65) to[in=270,out=65] (1.7,1) node[below,at end]{};
    \draw (v.75) to[in=270,out=75] (1.3,1) node[below,at end]{} ;
    \draw (v.125) to[in=270,out=125] (-2.1,1) node[below,at end]{};
    \draw (v.115) to[in=270,out=115] (-1.7,1) node[below,at end]{};
    \draw (v.105) to[in=270,out=105] (-1.3,1) node[below,at end]{};
    \draw[ultra thick,loosely dotted,-] (-.35,.5) -- (.35,.5);
\end{scope}
\end{pgfonlayer}
  \end{tikzpicture}
\end{equation*}
\begin{equation*}
   \begin{tikzpicture}[very thick,yscale=-1,xscale=-1]
\node (v) at (0,-1) [fill=white!80!gray,draw=white!80!gray, thick,rectangle,inner xsep=10pt,inner ysep=6pt, outer sep=-2pt] {$v$};
\begin{pgfonlayer}{background} \begin{scope}[very thick]
    \draw[wei] (v.170) to[in=280,out=170] node[at end, below]{$\mu^*$} (-2.5,1);
\draw[wei] (v.10) to[out=10,in=260] node[at end, below]{$\mu$}(2.5,1) ;
    \draw (-3,-2.2) to[in=190,out=90] node[at start,above]{$j$} (0,0)
    to[out=10,in=-90] node[at end,below]{$j$} (3,1)  ;
    \draw (v.55) to[in=270,out=55] node[below,at end]{}(2.1,1);
    \draw (v.65) to[in=270,out=65] node[below,at end]{} (1.7,1);
    \draw (v.75) to[in=270,out=75] node[below,at end]{} (1.3,1);
    \draw (v.125) to[in=270,out=125] node[below,at end]{} (-2.1,1);
    \draw (v.115) to[in=270,out=115] node[below,at end]{} (-1.7,1);
    \draw (v.105) to[in=270,out=105] node[below,at end]{}  (-1.3,1);
    \draw[ultra thick,loosely dotted,-] (-.35,.5) -- (.35,.5);
\end{scope}
\end{pgfonlayer}
\node at (-3.5,0){=};
  \end{tikzpicture}
    \begin{tikzpicture}[very thick,yscale=-1,xscale=-1]
\node (v) at (0,-1) [fill=white!80!gray,draw=white!80!gray, thick,rectangle,inner xsep=10pt,inner ysep=6pt, outer sep=-2pt] {$v$};
\begin{pgfonlayer}{background} \begin{scope}[very thick]

    \draw[wei] (v.170) to[in=280,out=170] node[at end, below]{$\mu^*$} (-2.5,1);
\draw[wei] (v.10) to[out=10,in=260] node[at end, below]{$\mu$} (2.5,1);

    \draw  (-3,-2.2) to[in=190,out=60] node[at start,above]{$j$}
    (0,-1.6) to[out=10,in=-90]node[at end,below]{$j$}  (3,1) ;
    \draw (v.55) to[in=270,out=55] (2.1,1) node[below,at end]{};
    \draw (v.65) to[in=270,out=65] (1.7,1) node[below,at end]{};
    \draw (v.75) to[in=270,out=75] (1.3,1) node[below,at end]{} ;
    \draw (v.125) to[in=270,out=125] (-2.1,1) node[below,at end]{};
    \draw (v.115) to[in=270,out=115] (-1.7,1) node[below,at end]{};
    \draw (v.105) to[in=270,out=105] (-1.3,1) node[below,at end]{};
    \draw[ultra thick,loosely dotted,-] (-.35,.5) -- (.35,.5);
\end{scope}
\end{pgfonlayer}
  \end{tikzpicture}
\end{equation*}
One can think of the relation above as categorifying the equality
$(F_iv)\otimes K=F_i(v\otimes K)$ for any invariant element $K$.

In order to check the coherence of these relations, we need only check
that we can pull a strand which passes over the cup and back either
off the bottom or off using the usual relations, and obtain the same
answer.  That is:
\begin{lemma}
  \begin{equation*}
   \begin{tikzpicture}[very thick,yscale=-1]
\node at (-3.4,0) {$(-1)^{g_j}\displaystyle\prod_{i\neq j}t_{ij}^{g_i}$};\node (v) at (0,-1.3) [fill=white!80!gray,draw=white!80!gray, thick,rectangle,inner xsep=10pt,inner ysep=6pt, outer sep=-2pt] {$v$};
\begin{pgfonlayer}{background} \begin{scope}[very thick]
    \draw[wei] (v.170) to[in=280,out=170] node[at end, below]{$\mu$} (-2.5,1.5);
\draw[wei] (v.10) to[out=10,in=260] node[at end, below]{$\mu^*$}(2.5,1.5) ;
    \draw (3,-1.5) to[in=-100,out=100] node[at start,above]{$j$} node[at end,below]{$j$} (3,1.5)  ;
    \draw (v.55) to[in=270,out=55] node[below,at end]{}(2.1,1.5);
    \draw (v.65) to[in=270,out=65] node[below,at end]{} (1.7,1.5);
    \draw (v.75) to[in=270,out=75] node[below,at end]{} (1.3,1.5);
    \draw (v.125) to[in=270,out=125] node[below,at end]{} (-2.1,1.5);
    \draw (v.115) to[in=270,out=115] node[below,at end]{} (-1.7,1.5);
    \draw (v.105) to[in=270,out=105] node[below,at end]{}  (-1.3,1.5);
    \draw[ultra thick,loosely dotted,-] (-.35,0) -- (.35,0);
\end{scope}
\end{pgfonlayer}
\node at (3.5,0){=};
  \end{tikzpicture}
    \begin{tikzpicture}[very thick,yscale=-1]
\node (v) at (0,-1.3) [fill=white!80!gray,draw=white!80!gray, thick,rectangle,inner xsep=10pt,inner ysep=6pt, outer sep=-2pt] {$v$};
\begin{pgfonlayer}{background} \begin{scope}[very thick]
    \draw[wei] (v.170) to[in=280,out=170] node[at end, below]{$\mu$} (-2.5,1.5);
\draw[wei] (v.10) to[out=10,in=260] node[at end, below]{$\mu^*$}(2.5,1.5) ;
    \draw (3,-1.5) to[in=-20,out=150]   node[at start,above]{$j$}
    (-2.2,-.2) to[out=160,in=-90] (-2.4,0) to[out=90,in=-160] (-2.2,.2)  to [in=-150,out=20] node[at end,below]{$j$} (3,1.5)  ;
    \draw (v.55) to[in=270,out=55] node[below,at end]{}(2.1,1.5);
    \draw (v.65) to[in=270,out=65] node[below,at end]{} (1.7,1.5);
    \draw (v.75) to[in=270,out=75] node[below,at end]{} (1.3,1.5);
    \draw (v.125) to[in=270,out=125] node[below,at end]{} (-2.1,1.5);
    \draw (v.115) to[in=270,out=115] node[below,at end]{} (-1.7,1.5);
    \draw (v.105) to[in=270,out=105] node[below,at end]{}  (-1.3,1.5);
    \draw[ultra thick,loosely dotted,-] (-.35,0) -- (.35,0);
\end{scope}
\end{pgfonlayer}
  \end{tikzpicture}
\end{equation*}
\end{lemma}
\begin{proof}
 This proof will be a bit simpler if we allow ourselves to use both up
 and downward oriented strands (as in the proof of
 \cite[2.11]{WebCTP}), that is using $\eE_i$'s as well as
  $\eF_i$.  We can fix the weight of one of the regions in the
  diagram freely, and the others will be fixed.  We choose to label
  the area outside the cup with weight 0. 

 We begin with the right-hand picture, and
  add a curl.  Push the left side of the curl through the strands. The
  primary term that we arrive at has a curl wrapped over all strands;
  all the correction terms have a strand pulled right out of the cap,
  and thus are 0.  By the relations of $\tU$ (see Figures 2 and 3 of \cite{WebCTP}), this term is multiplied
  by $t_{ij}^{-1}$ each time we cross a strand labeled $i$ for $i\neq
  j$, and by $-1$ when we cross one labeled $j$.  

Thus we obtain the equality:
  \begin{equation}\label{bubble-laid}   
\scalebox{.63}{
\begin{tikzpicture}[very
      thick,yscale=-1,baseline]
\node[scale=1.4] at (2.3,-1.1) {$0$};
\node at (-3.4,0) {$(-1)^{g_j}\displaystyle\prod_{i\neq j}t_{ij}^{g_i}$};
\node (v) at (0,-1.3) [fill=white!80!gray,draw=white!80!gray, thick,rectangle,inner xsep=10pt,inner ysep=6pt, outer sep=-2pt] {$v$};
\begin{pgfonlayer}{background} \begin{scope}[very thick]
    \draw[wei] (v.170) to[in=280,out=170] node[at end, below]{$\mu$} (-2.5,1.5);
\draw[wei] (v.10) to[out=10,in=260] node[at end, below]{$\mu^*$}(2.5,1.5) ;
    \draw (3,-1.5) to[in=-100,out=100] node[at start,above]{$j$} node[at end,below]{$j$} (3,1.5)  ;
    \draw (v.55) to[in=270,out=55] node[below,at end]{}(2.1,1.5);
    \draw (v.65) to[in=270,out=65] node[below,at end]{} (1.7,1.5);
    \draw (v.75) to[in=270,out=75] node[below,at end]{} (1.3,1.5);
    \draw (v.125) to[in=270,out=125] node[below,at end]{} (-2.1,1.5);
    \draw (v.115) to[in=270,out=115] node[below,at end]{} (-1.7,1.5);
    \draw (v.105) to[in=270,out=105] node[below,at end]{}  (-1.3,1.5);
    \draw[ultra thick,loosely dotted,-] (-.35,0) -- (.35,0);
\end{scope}
\end{pgfonlayer}
\node at (3.5,0){=};
  \end{tikzpicture}
\begin{tikzpicture}[very
      thick,yscale=-1,baseline]
\node[scale=1.4] at (2.3,-1.1) {$0$};
\node at (-3.4,0) {$(-1)^{g_j}\displaystyle\prod_{i\neq j}t_{ij}^{g_i}$};
\node (v) at (0,-1.3) [fill=white!80!gray,draw=white!80!gray, thick,rectangle,inner xsep=10pt,inner ysep=6pt, outer sep=-2pt] {$v$};
\begin{pgfonlayer}{background} \begin{scope}[very thick]
    \draw[wei] (v.170) to[in=280,out=170] node[at end, below]{$\mu$} (-2.5,1.5);
\draw[wei] (v.10) to[out=10,in=260] node[at end, below]{$\mu^*$}(2.5,1.5) ;
    \draw [postaction={decorate,decoration={markings,
    mark=at position .5 with {\arrow[scale=1.2]{>}}}},postaction={decorate,decoration={markings,
    mark=at position .06 with {\arrow[scale=1.2]{>}}}},postaction={decorate,decoration={markings,
    mark=at position .94 with {\arrow[scale=1.2]{>}}}}] (3,-1.5) to[in=0,out=100] node[at start,above]{$j$} 
(2.7,.3) to[out=180,in=90] (2.4, 0) to[out=-90,in=180] (2.7,-.3) to [out=0,in=-100]
node[at end,below]{$j$} (3,1.5)  ;
    \draw (v.55) to[in=270,out=55] node[below,at end]{}(2.1,1.5);
    \draw (v.65) to[in=270,out=65] node[below,at end]{} (1.7,1.5);
    \draw (v.75) to[in=270,out=75] node[below,at end]{} (1.3,1.5);
    \draw (v.125) to[in=270,out=125] node[below,at end]{} (-2.1,1.5);
    \draw (v.115) to[in=270,out=115] node[below,at end]{} (-1.7,1.5);
    \draw (v.105) to[in=270,out=105] node[below,at end]{}  (-1.3,1.5);
    \draw[ultra thick,loosely dotted,-] (-.35,0) -- (.35,0);
\end{scope}
\end{pgfonlayer}
\node at (3.5,0){=};
  \end{tikzpicture}
   \begin{tikzpicture}[very thick,yscale=-1,baseline]
\node[scale=1.4] at (2.3,-1.1) {$0$};
\node (v) at (0,-1.3) [fill=white!80!gray,draw=white!80!gray, thick,rectangle,inner xsep=10pt,inner ysep=6pt, outer sep=-2pt] {$v$};
\begin{pgfonlayer}{background} \begin{scope}[very thick]
    \draw[wei] (v.170) to[in=280,out=170] node[at end, below]{$\mu$} (-2.5,1.5);
\draw[wei] (v.10) to[out=10,in=260] node[at end, below]{$\mu^*$}(2.5,1.5) ;
    \draw [postaction={decorate,decoration={markings,
    mark=at position .5 with {\arrow[scale=1.2]{>}}}},postaction={decorate,decoration={markings,
    mark=at position .06 with {\arrow[scale=1.2]{>}}}},postaction={decorate,decoration={markings,
    mark=at position .94 with {\arrow[scale=1.2]{>}}}}] (3,-1.5) to[in=0,out=100] node[at start,above]{$j$} 
(2,.5) to (-2.1,.5) to[out=180,in=90] (-2.6, 0) to[out=-90,in=180] (-2.1, -.5) to (2,-.5) to [out=0,in=-100]
node[at end,below]{$j$} (3,1.5)  ;
    \draw (v.55) to[in=270,out=55] node[below,at end]{}(2.1,1.5);
    \draw (v.65) to[in=270,out=65] node[below,at end]{} (1.7,1.5);
    \draw (v.75) to[in=270,out=75] node[below,at end]{} (1.3,1.5);
    \draw (v.125) to[in=270,out=125] node[below,at end]{} (-2.1,1.5);
    \draw (v.115) to[in=270,out=115] node[below,at end]{} (-1.7,1.5);
    \draw (v.105) to[in=270,out=105] node[below,at end]{}  (-1.3,1.5);
    \draw[ultra thick,loosely dotted,-] (-.35,0) -- (.35,0);
\end{scope}
\end{pgfonlayer}
  \end{tikzpicture}}
\end{equation}
Now, we move the crossing through to the left.
  This also has correction terms coming from the triple point
  relations; these consist of a strands pulled for the rightmost group
  of strands, and then a bubble laid over the remain strands on the
  lefthand side.  We intend to show that all these correction terms
  are 0. 

If the rightmost of these strands is not
  $i$-colored, then we can pull this left, at the cost of adding dots
  on the bubble; if it is $i$-colored, then we can collapse it to a
  crossing.  We gradually push this crossing or the right edge of the
  bubble to the left, eventually we arrive either at a term where a
  strand is pulled out from the cup, which is 0, or where we have a
  positive degree bubble, which is thus also 0, since the diagram
  acting on the simple has negative degree and whose action is
  trivial.
  \begin{equation*}   
   \begin{tikzpicture}[very thick,yscale=-1,baseline]
\node[scale=1.4] at (1.8,-1.4) {$0$};
\node (v) at (0,-1.3) [fill=white!80!gray,draw=white!80!gray, thick,rectangle,inner xsep=10pt,inner ysep=6pt, outer sep=-2pt] {$v$};
\begin{pgfonlayer}{background} \begin{scope}[very thick]
    \draw[wei] (v.170) to[in=280,out=170] node[at end, below]{$\mu$} (-2.5,1.5);
\draw[wei] (v.10) to[out=10,in=260] node[at end, below]{$\mu^*$}(2.5,1.5) ;
    \draw [postaction={decorate,decoration={markings,
    mark=at position .5 with {\arrow[scale=1.2]{>}}}},postaction={decorate,decoration={markings,
    mark=at position .06 with {\arrow[scale=1.2]{>}}}},postaction={decorate,decoration={markings,
    mark=at position .94 with {\arrow[scale=1.2]{>}}}}] (3,-1.5) to[in=0,out=100] node[at start,above]{$j$} 
(2,.5) to (-2.1,.5) to[out=180,in=90] (-2.6, 0) to[out=-90,in=180] (-2.1, -.5) to (2,-.5) to [out=0,in=-100]
node[at end,below]{$j$} (3,1.5)  ;
    \draw (v.55) to[in=270,out=55] node[below,at end]{}(2.1,1.5);
    \draw (v.65) to[in=270,out=65] node[below,at end]{} (1.7,1.5);
    \draw (v.75) to[in=270,out=75] node[below,at end]{} (1.3,1.5);
    \draw (v.125) to[in=270,out=125] node[below,at end]{} (-2.1,1.5);
    \draw (v.115) to[in=270,out=115] node[below,at end]{} (-1.7,1.5);
    \draw (v.105) to[in=270,out=105] node[below,at end]{}  (-1.3,1.5);
    \draw[ultra thick,loosely dotted,-] (-.35,0) -- (.35,0);
\end{scope}
\end{pgfonlayer}\node at (3.5,0){=};
  \end{tikzpicture}
   \begin{tikzpicture}[very thick,yscale=-1,baseline]
\node[scale=1.4] at (1.8,-1.4) {$0$};
\node (v) at (0,-1.3) [fill=white!80!gray,draw=white!80!gray, thick,rectangle,inner xsep=10pt,inner ysep=6pt, outer sep=-2pt] {$v$};
\begin{pgfonlayer}{background} \begin{scope}[very thick]
    \draw[wei] (v.170) to[in=280,out=170] node[at end, below]{$\mu$} (-2.5,1.5);
\draw[wei] (v.10) to[out=10,in=260] node[at end, below]{$\mu^*$}(2.5,1.5) ;
    \draw [postaction={decorate,decoration={markings,
    mark=at position .54 with {\arrow[scale=1.2]{>}}}},postaction={decorate,decoration={markings,
    mark=at position .06 with {\arrow[scale=1.2]{>}}}},postaction={decorate,decoration={markings,
    mark=at position .74 with {\arrow[scale=1.2]{>}}}}] (3,-1.5) to[in=-20,out=150] node[at start,above]{$j$} 
(-1.4,-.2) to[out=160,in=0] (-2.3,.2) to[out=180,in=90] (-2.5, 0) to[out=-90,in=180] (-2.3, -.2) to[out=0,in=-160] (-1.4,.2) to [out=20,in=-150]
node[at end,below]{$j$} (3,1.5)  ;
    \draw (v.55) to[in=270,out=55] node[below,at end]{}(2.1,1.5);
    \draw (v.65) to[in=270,out=65] node[below,at end]{} (1.7,1.5);
    \draw (v.75) to[in=270,out=75] node[below,at end]{} (1.3,1.5);
    \draw (v.125) to[in=270,out=125] node[below,at end]{} (-2.1,1.5);
    \draw (v.115) to[in=270,out=115] node[below,at end]{} (-1.7,1.5);
    \draw (v.105) to[in=270,out=105] node[below,at end]{}  (-1.3,1.5);
    \draw[ultra thick,loosely dotted,-] (-.35,0) -- (.35,0);
\end{scope}
\end{pgfonlayer}
  \end{tikzpicture}
\end{equation*}
In order to finish, we apply this relation to the loop at the far left:
\begin{equation*}
  \begin{tikzpicture}[very thick,baseline]
\node[scale=1.2] at (-.7,.8){$0$};
    \draw[wei] (0,-1) --  node[at end, above]{$\mu$} node[at start, below]{$\mu$} (0,1);
   \draw [postaction={decorate,decoration={markings,
    mark=at position .5 with {\arrow[scale=1.2]{<}}}},postaction={decorate,decoration={markings,
    mark=at position .06 with {\arrow[scale=1.2]{<}}}},postaction={decorate,decoration={markings,
    mark=at position .94 with {\arrow[scale=1.2]{<}}}}] (.7,-1) to node[at start,below]{$j$} 
(.7,-.6) to[out=90,in=0] (0,.3) to[out=180,in=90] (-.3, 0) to[out=-90,in=180] (0, -.3) to[out=0,in=-90] (.7,.6) to 
node[at end,above]{$j$} (.7,1);
\node at (1,0){$=$};
  \end{tikzpicture}
  \begin{tikzpicture}[very thick,baseline]
\node[scale=1.2] at (-.3,.8){$0$};
    \draw[wei] (.5,-1) --  node[at end, above]{$\mu$} node[at start, below]{$\mu$} (.5,1);
   \draw [postaction={decorate,decoration={markings,
    mark=at position .5 with {\arrow[scale=1.2]{<}}}}] (0,.3)
to[out=180,in=90] (-.3, 0) to[out=-90,in=180] node[at start,left]{$j$}
(0, -.3) to[out=0,in=-90] (.3,0) to[out=90,in=0] (0,.3);
\draw [postaction={decorate,decoration={markings,
    mark=at position .8 with {\arrow[scale=1.2]{<}}}}] (1.2,-1) -- node [midway,fill=black, inner sep=2pt, circle,label=right:{$\mu^j$}]{} node[at end,above]{$j$} node[at start,below]{$j$} (1.2,1);
\node at (2.2,0){$=$};
  \end{tikzpicture}
\begin{tikzpicture}[very thick,baseline]
\node[scale=1.2] at (-.2,.8){$0$};
    \draw[wei] (.5,-1) to[out=90,in=-90] node[at start, below]{$\mu$}  
    (1.2,0) to[out=90,in=-90] node[at end, above]{$\mu$} (.5,1);
\draw [postaction={decorate,decoration={markings,
    mark=at position .5 with {\arrow[scale=1.2]{<}}}}] (1.2,-1) to[out=90,in=-90] node[at start,below]{$j$}  (.5,0) to[out=90,in=-90]  node[at end,above]{$j$} (1.2,1);
  \end{tikzpicture}
\end{equation*}

This shows that the RHS of \eqref{bubble-laid} is equal to the RHS of
the desired result, completing the proof.
\end{proof}

 Let $\fF_{\Bi}^{\kappa}$ denote composition of functors where one reads the corresponding idempotent from left to right, and applies $\fF_i$ when passing a black strand labeled $i$, and $\fI_\la$ when passing a red strands labeled $\la$.  This has the useful property that $\fF_{\Bi}^{\kappa}P_\emptyset=P^\kappa_\Bi$.  

We  can write $\bla=\bla'\bla''$ and $\Bi=\Bi'\Bi''$ as the union of the red/black strands that come before and after the point where $\mu,\mu^*$ are inserted, with $\kappa',\kappa''$ be the corresponding $\kappa$-functions.  Then, we can give an alternate definition of this bimodule by the formula.
  $$P^\kappa_{\Bi}\otimes \coe^{\kappa^+}_{\kappa}\cong \fF_{\Bi''}^{\kappa''}\big(\mathbb{S}^{\bla';(\mu,\mu*)}(P^{\kappa}_{\Bi'}\boxtimes L_\mu)\big).$$

\begin{defn}
  The coevaluation functor is
  \begin{equation*}
\mathbb{K}^{\bla^+}_{\bla}=\RHom_{\alg^{\bla}}( \coe^{\bla^+}_\bla, -)(2\llrr)[-2\rcl]\colon\cat^\bla\to \cat^{\bla+}.
\end{equation*}

Similarly, the quantum trace functor is the left adjoint to this given by
  \begin{equation*}
\mathbb{T}^{\bla^+}_{\bla}=-\overset{L}\otimes_{\alg^{\bla^+}}\coe^{\bla^+}_\bla\colon\cat^{\bla^+}\to \cat^{\bla}.
\end{equation*}
The evaluation and quantum cotrace are defined similarly.
\end{defn}
Since $\coe^{\bla^+}_\bla$ is projective as a right module, $\Hom$ with it gives an exact functor.  The quantum trace functor, however, is very far from being exact.
\begin{prop}\label{qt-cat}
  $\mathbb{K}^{\bla^+}_{\bla}$ categorifies the coevaluation and $\mathbb{T}^{\bla^+}_{\bla}$ the quantum trace.
\end{prop}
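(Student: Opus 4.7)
The plan is to reduce the general case to the base situations handled in Propositions \ref{coev} and \ref{trace} by exploiting the compatibility of $\coe^{\bla^+}_\bla$ with both the categorified $U_q(\fg)$-action and the standardization functor $\mathbb{S}^{\bla';(\mu,\mu^*)}$. My first step is to show that $\mathbb{T}^{\bla^+}_\bla$ commutes with the induction and restriction functors $\fF_i, \fE_i$; this follows directly from the explicit description
\[
P^\kappa_{\Bi}\otimes \coe^{\kappa^+}_{\kappa}\cong \fF_{\Bi''}^{\kappa''}\big(\mathbb{S}^{\bla';(\mu,\mu^*)}(P^{\kappa}_{\Bi'}\boxtimes L_\mu)\big),
\]
since this formula manifestly treats additions of a black $i$-strand at the far right transparently, while a symmetric reformulation (regarding the bimodule as built from $L_\mu$ with $\bla''$ absorbed into a standardization on the left) handles additions at the far left.

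Once commutation with the $\fF_i$ and $\fE_i$ is established, the induced map $V_{\bla^+}\to V_\bla$ must be $U_q(\fg)$-equivariant, and the unique such map up to a scalar is the quantum trace that contracts the $\mu,\mu^*$ factors at positions $j$ and $j+1$. To fix the scalar, I would evaluate on the convenient projective $\mathbb{S}^{\bla^+}(P^\kappa_{\Bi'}\boxtimes P^0_\emptyset \boxtimes P^{\kappa''}_{\Bi''})$ and use the formula above, which reduces the computation at this class to the two-strand case $\bla=\emptyset$ where Proposition \ref{trace} supplies the correct normalization. Any additional crossings of the $\mu,\mu^*$ strands with the intervening black strands contribute only the expected factor, since the action of $\mathbb{S}^{\bla';(\mu,\mu^*)}$ on Grothendieck classes scales tensor products by the prescribed powers of $q$.

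The coevaluation statement for $\mathbb{K}^{\bla^+}_\bla$ then follows by adjunction. Since $\RHom$ is right adjoint to tensor product, $\mathbb{K}^{\bla^+}_\bla$ is right adjoint to $\mathbb{T}^{\bla^+}_\bla$ up to the explicit shift $(2\llrr)[-2\rcl]$; in $U_q(\fg)$-representations under the ST ribbon structure, the coevaluation is characterized as the corresponding adjoint of the quantum trace (as the diagrammatic calculus recalled in Figure \ref{reverse} makes clear), so categorifying the quantum trace automatically forces categorification of the coevaluation with the correct shifts.

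The main obstacle is pinning down the grading normalization. This is essentially bookkeeping but non-trivial: one must track (i) the intrinsic shift $(2\llrr)[-2\rcl]$ carried by $L_\mu$ from Proposition \ref{coev}, (ii) the degree contributions from crossings of the $\mu,\mu^*$ strands with the intervening black strands, and (iii) the compatibility of $\mathbb{S}^{\bla';(\mu,\mu^*)}$ with the grading. The commutation with the $\fF_i$ established in the first step is essential here, as it guarantees that the contributions match across all weight spaces once they are verified on one distinguished generator.
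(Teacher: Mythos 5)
Your reduction to the edge case via $\tU$-commutation and the final appeal to Propositions \ref{coev} and \ref{trace} are close in spirit to the paper's proof, but there is a genuine logical gap in the middle. You assert that ``the unique such map up to a scalar is the quantum trace,'' i.e.\ that $\Hom_{U_q(\fg)}(V_{\bla^+},V_\bla)$ is one-dimensional, and you then fix the scalar by a single computation. That Hom space is typically of dimension greater than one: already for $\fg=\mathfrak{sl}_2$ with $\bla=(1)$ and $\bla^+=(1,1,1)$ one has $V_1^{\otimes 3}\cong V_3\oplus V_1\oplus V_1$, so $\Hom_{U_q}(V_{\bla^+},V_\bla)$ is two-dimensional. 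Equivariance alone therefore does not pin down the decategorified map, and neither does checking on a single distinguished class: $V_{\bla^+}$ is not a cyclic $U_q(\fg)$-module, so commutation with $\fF_i,\fE_i$ plus one computation is not enough to propagate to the whole Grothendieck group. Your chosen test objects $\mathbb{S}^{\bla^+}(P^\kappa_{\Bi'}\boxtimes P^0_\emptyset\boxtimes P^{\kappa''}_{\Bi''})$ all have the $\mu,\mu^*$ factor in its extreme weight, so their classes do not span $V_{\bla^+}$, and nothing in the commutation argument lets you bootstrap from them.

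The paper avoids this by working with the coevaluation $\mathbb{K}^{\bla^+}_\bla$ first (your direction of adjunction is reversed, which is a harmless difference) and computing $\mathbb{K}^{\bla^+}_\bla(S^\kappa_\Bi)$ explicitly for \emph{every} standard module, showing it is the standardization $\mathbb{S}^{\bla;\mu,\mu^*}(S^\kappa_\Bi\boxtimes L_\mu)(2\llrr)[-2\rcl]$. Since the classes $[S^\kappa_\Bi]$ span $V_\bla$ over $\Z[\qD,q^{-\nicefrac 1D}]$, this determines the induced map on the whole Grothendieck group and directly identifies it with $v\mapsto v\otimes[\Lco]$ in each factor, which is the coevaluation. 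To repair your argument you would need to replace the uniqueness claim by a verification on a genuine spanning set of $K_0(\alg^{\bla^+})$, which essentially forces you back into the paper's computation.
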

\begin{proof}
We need only prove the former, since the latter follows by adjunction.  Furthermore, we may reduce to the case where $\mu$ is added at the end
of the sequence, since all other cases are obtained from this by the
action of $\tU$.

In this case, consider
\begin{math}
\mathbb{K}^{\bla^+}_{\bla}(S^\kappa_{\Bi}).
\end{math}
The resulting module is isomorphic to the standardization
\begin{equation*}
  \mathbb{S}^{\bla;\mu,\mu^*}(S^\kappa_{\Bi}\boxtimes L_\mu)(2\llrr)[-2\rcl]
\end{equation*}
since any diagram with a left crossing involving the red lines from $\la_m$'s is trivial since we are considering a standardization and any with a left crossing on the strand labeled $\mu$ is killed since it is positive degree.

This reduces to the case where $\bla=\emptyset$, which we have covered in Propositions \ref{coev} and \ref{trace}.
\end{proof}

The most important property of these functors is that they satisfy
the obvious isotopy; there are two functors
\[S_1=\mathbb{T}^{\bla_1\la;\la^*,\la;\bla_2}_{\bla_1\la\bla_2}\mathbb{K}^{\bla_1;\la,\la^*;\la\bla_2}_{\bla_1\la\bla_2}\qquad S_2=\mathbb{T}^{\bla_1;\la,\la^*;\la\bla_2}_{\bla_1\la\bla_2}\mathbb{K}^{\bla_1\la;\la^*,\la;\bla_2}_{\bla_1\la\bla_2}\]
which come from adding a pair of the representations
are added on the left of an entry $\la$, and removing them on the
right of $\la$ or {\it vice versa}.
\begin{prop}\label{S-move}
  The functors $S_1$ and $S_2$ are isomorphic to the identity functor.
\end{prop}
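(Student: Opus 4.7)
The plan is to show $S_1 \cong \id$ by explicit computation on a generator, after a locality reduction; the argument for $S_2$ is entirely parallel (using the mirror diagrams). First I would reduce to the case $\bla_1=\bla_2=\emptyset$. The bimodules $\coe^{\bla^+}_\bla$ are defined by local diagrams concentrated near the inserted $\mu,\mu^*$ pair and so commute with the 1-morphisms of $\tU$ acting on strands in $\bla_1$ and $\bla_2$; together with the standardization description
\[
P^\kappa_\Bi \otimes \coe^{\bla^+}_\bla \cong \fF^{\kappa''}_{\Bi''}\bigl(\mathbb{S}^{\bla';\mu,\mu^*}(P^\kappa_{\Bi'} \boxtimes L_\mu)\bigr),
\]
the general statement then follows from the base case applied inside a standardization.

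In the base case $S_1$ is an endofunctor of $\cat^\la$, and by Proposition \ref{qt-cat} it categorifies the zig-zag composition $V_\la \to V_\la\otimes V_{\la^*}\otimes V_\la \to V_\la$, which is the identity on $V_\la$. The functors $\mathbb{K}^{\la,\la^*,\la}_\la$ and $\mathbb{T}^{\la,\la^*,\la}_\la$ commute with the action of $\tU$ (by naturality of the local diagrams defining the bimodules), and $\cat^\la$ is generated as a triangulated category by the trivial-weight projective $P^0_\emptyset$ under the action of $\tU$. It therefore suffices to exhibit a canonical natural transformation $\eta\colon \id \to S_1$ and verify that $\eta_{P^0_\emptyset}$ is an isomorphism. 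The natural transformation I would use is the unit of the adjunction $(\mathbb{T},\mathbb{K})$ recorded in Proposition~\ref{qt-cat}, applied to the relevant insertion points.

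The verification on $P^0_\emptyset$ is the technical heart. Unwind $\mathbb{K}(P^0_\emptyset)$ using the description of $\coe^{\la,\la^*,\la}_\la$ in terms of $\Lco$, then replace $\Lco$ by the resolution of Proposition~\ref{sta-res}, whose bottom term is $\ocL(-2\llrr)$ in homological degree $2\rcl$, and apply $\mathbb{T}$, which on the $\la^*,\la$ strands amounts to derived tensor with $\dot{L}_\la$. The essential input is Corollary~\ref{LM}, which says $\Tor^i(\ocL,\dot{L}_\la)$ is $\K(-2\llrr)$ for $i=2\rcl$ and vanishes otherwise; combined with the shift $(2\llrr)[-2\rcl]$ built into the definition of $\mathbb{K}$, all grading and cohomological shifts cancel and the contribution is precisely $P^0_\emptyset$ in internal and cohomological degree $0$. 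The main obstacle will be bookkeeping: one must show that the spectral sequence computing the composite derived tensor product collapses so that only the bottom term of the resolution of $\Lco$ contributes, and that the resulting isomorphism coincides with $\eta_{P^0_\emptyset}$ (rather than differing from it by some endomorphism of $P^0_\emptyset$). Once this is established, naturality, commutation with $\tU$, and the Grothendieck-group identity from Proposition~\ref{qt-cat} force $\eta$ to be an isomorphism everywhere.
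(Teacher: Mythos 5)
Your overall strategy---reduce to $\bla_1=\bla_2=\emptyset$, use commutation with $\tU$ to check on $P_\emptyset$, resolve $\Lco$ by Proposition \ref{sta-res}, and watch the shift $(2\llrr)[-2\rcl]$ cancel against the bottom term $\ocL(-2\llrr)$ in homological degree $2\rcl$---is the paper's strategy, and your numerology is right. But there are two genuine problems.

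First, the natural transformation you propose does not exist as described. The adjunction of Proposition \ref{qt-cat} pairs $\mathbb{T}^{\bla^+}_\bla$ with $\mathbb{K}^{\bla^+}_\bla$ at the \emph{same} insertion point; its unit is a map $\id\to\mathbb{K}\mathbb{T}$ on $\cat^{\bla^+}$ and its counit a map $\mathbb{T}\mathbb{K}\to\id$ on $\cat^{\bla}$. In $S_1$ the cup and the cap act at \emph{different}, overlapping pairs of positions, so the composite is $\mathbb{T}'\circ\mathbb{K}$ with $\mathbb{T}'$ adjoint to a different coevaluation, not to $\mathbb{K}$; neither unit nor counit produces a map $\id\to S_1$, and the order of composition is wrong for a unit in any case. (The paper explicitly warns right after this proposition that $\mathbb{K}$ and $\mathbb{T}$ are \emph{not} in general biadjoint; the S-move is evidence for a biadjunction one categorical level up, not a consequence of one.) The paper avoids this by not constructing a natural transformation at all: it verifies directly that the composite applied to $P_\emptyset$ is $\K$ and appeals to generation under $\tU$, the same standard of rigor used in Proposition \ref{qt-cat}.

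Second, the ``bookkeeping'' you defer is the actual content of the proof, and the tool you name for it is the wrong one. Corollary \ref{LM} computes $\Tor$ over the two-strand algebra $\alg^{\la,\la^*}$, whereas $S_1(P_\emptyset)$ is a derived tensor product over the three-strand algebra of two standardizations attached to \emph{different} bracketings of the strands ($\la;\la^*,\la$ versus $\la,\la^*;\la$), so it does not reduce to a two-strand $\Tor$ directly. The mechanism that collapses the spectral sequence in the paper is the $\Tor$-vanishing between standardly filtered objects lying in different pieces of the semi-orthogonal decomposition (established in the proof of Lemma \ref{pro-sta}): after resolving \emph{both} copies of $\Lco$ by Proposition \ref{sta-res}, every term $\Tor^k\bigl(\mathbb{S}^{\la;\la^*,\la}(P_\emptyset\boxtimes M_i),\mathbb{S}^{\la,\la^*;\la}(\dot M_j\boxtimes\dot P_\emptyset)\bigr)$ vanishes unless $i=0$, $j=2\rcl$, $k=0$, and the single surviving term is a degree-zero $\Tor$ of standardized projectives, equal to $\K[-2\rcl]$. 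Without this semi-orthogonality input your spectral sequence has no reason to degenerate to the bottom term, so the step you flag as an obstacle is a genuine gap rather than routine verification.
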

\begin{proof}
  As in Proposition \ref{qt-cat}, we can easily reduce to the case
  where $\bla_1=\bla_2=\emptyset$.  Furthermore, these functors
  commute with the action of $\tU$, and so it suffices to check this
  equality on $P_\emptyset$.  To prove the result for $S_2$,  we must check that
  \[\mathbb{S}^{\la;\la^*,\la}(P_\emptyset\boxtimes
  L_\la)\overset{L}\otimes_{\alg^\bla} \mathbb{S}^{\la,\la^*;\la}(\dot L_\la\boxtimes
  \dot P_\emptyset)(2\llrr)[-2\rcl]\cong \K\]
Applying the dot involution to switch left/right, the symmetry of
tensor product shows that $S_1$ reduces to the same calculation.

We can use Lemma \ref{sta-res} to expand $L_\la$ into a complex, and
then use the spectral sequence attached to tensoring these complexes.
The $E^2$-page of this spectral sequence has entries \[E^2_{k,m}=\bigoplus_{i+j=m}\operatorname{Tor}^k\Big(\mathbb{S}^{\la;\la^*,\la}(P_\emptyset\boxtimes
  M_i), \mathbb{S}^{\la,\la^*;\la}(\dot M_j\boxtimes
  \dot P_\emptyset)(2\llrr)\Big)\]

By the Tor-vanishing discussed in the proof of \ref{pro-sta}, this
will be 0 unless the two factors lie in the same piece of the
semi-orthogonal decomposition, that is, if $i=0, j=2\rcl$ and $k=0$.  This
term is exactly 
\begin{equation*}
  \mathbb{S}^{\la;\la^*;\la}(P_\emptyset\boxtimes P_{\Bi_\la}\boxtimes P_\emptyset)
  )\overset{L}\otimes_{\alg^\bla} \mathbb{S}^{\la,\la^*;\la}(\dot
  P_\emptyset\boxtimes \dot P_{\Bi_\la}\boxtimes
  \dot P_\emptyset)[-2\rcl]\cong \K[-2\rcl]
\end{equation*}
with the homological shift canceling the fact that $j=2\rcl$.  Thus,
the result follows. \end{proof}

  \begin{figure}
     \centering
 \tikzset{knot/.style={draw=white,double=red,line width=3.5pt, double
     distance=1.2pt}}
 \begin{tikzpicture}[very thick,knot,xscale=1.5]
 \node (a) at (-2.5,0){\begin{tikzpicture}[xscale=.8]
 \draw[knot,postaction={decorate,decoration={markings,
    mark=at position .5 with {\arrow[red,scale=.4]{>}}}}] (-2,1) to[out=-45,in=180] (-1,.1) to[out=0,in=180] (0,.9) to[out=0,in=135] (1,0);
  \end{tikzpicture}};
\node (d)  at (0,0){\begin{tikzpicture}[xscale=.8]
\draw[knot,postaction={decorate,decoration={markings,
    mark=at position .5 with {\arrow[red,scale=.4]{>}}}}] (1,1.5) -- (1,0);
\end{tikzpicture}};
\node (e) at (2.5,0){\begin{tikzpicture}[xscale=-.8]
 \draw[knot,postaction={decorate,decoration={markings,
    mark=at position .5 with {\arrow[red,scale=.4]{>}}}}] (-2,1) to[out=-45,in=180] (-1,.1) to[out=0,in=180] (0,.9) to[out=0,in=135] (1,0);
  \end{tikzpicture}};
\draw[->,black,thick] (a) -- (d);\draw[->,black,thick] (d) -- (e);
 \end{tikzpicture}
     \caption{The ``S-move''}
     \label{Smove}
  \end{figure}
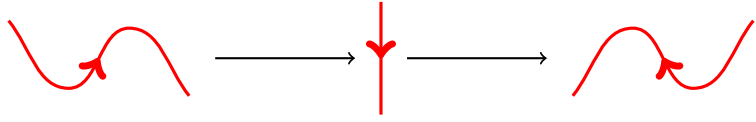

This move is depicted in more usual topological form in
Figure~\ref{Smove}; it is extremely tempting to conclude that this
proposition shows that the functors $\mathbb{K}$ and $\mathbb{T}$ are biadjoint; in
fact, they are not always, though the adjunction on one side is clear
from the definition.  Rather, this is reflecting some sort of
biadjunction between the 2-functors of ``tensor with $\cata^\la$'' and
``tensor with $\cata^{\la^*}$'' on the 2-category of representations
of $\tU$.  While there is not a unified construction of a tensor
product of two $\tU$ categories, one can easily generalize the
definition of $\cata^\bla$ to describe auto-2-functors of $\tU$
representations given by adding one red line; we will discuss this
construction in more detail in forthcoming work \cite{WebCB}.

\section{Knot invariants}
\label{sec:invariants}

\setcounter{equation}{0}

\subsection{Constructing knot and tangle invariants}

Now, we will use the functors from the previous section to construct tangle invariants. Using these as building blocks, we can associate a
functor $\Phi(T)\colon\cat^\bla\to \cat^{\bmu}$ to any diagram of an oriented labeled ribbon tangle  $T$ with the bottom ends given by $\bla=\{\la_1,\dots,\la_\ell\}$ and the
top ends labeled with $\bmu=\{\mu_1,\dots,\mu_m\}$.

As usual, we choose a projection of our tangle such that at any height
(fixed value of the $x$-coordinate) there is at most a single crossing, single
cup or single cap. This allows us to write our tangle as a composition of
these elementary tangles.

For a crossing, we ignore the orientation of the knot, and separate
crossings into positive (right-handed) and negative (left-handed) according to the upward
orientation we have chosen on $\R^2$.
\begin{itemize}
\item To a positive crossing of the $i$ and $i+1$st strands, we associate the braiding functor
  $\mathbb{B}_{\si_i}$.  \item To a negative crossing, we associate its
  adjoint $\mathbb{B}_{\si_i^{-1}}$ (the left and right adjoints are isomorphic,
  since $\mathbb{B}$ is an equivalence).
\end{itemize}
For the cups and caps, it is necessary to consider the orientation, following the pictures of Figures \ref{coev-ev} and \ref{q-tr}.
\begin{itemize}
\item To a clockwise oriented cup, we associate the coevaluation.
\item To a clockwise oriented cap, we associate the quantum trace.
\item  To a counter-clockwise cup, we associate the quantum cotrace.
\item  To a counter-clockwise cap, we associate the evaluation.
\end{itemize}

\begin{prop}\label{decat-all}
 The map induced by $\Phi(T):\cat^\bla\to\cat^\bmu$ on the Grothendieck groups $V_{\bla}\to V_\bmu$ is that assigned to a ribbon tangle by the structure maps of the category of $U_q(\fg)$ with the ST ribbon structure.

In particular, the graded Euler characteristic of the complex $\Phi(T)(\K)$ for a closed link is the quantum knot invariant for the ST ribbon element.
\end{prop}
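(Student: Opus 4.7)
The plan is to assemble this from the elementary pieces we have already handled, relying on the fact that decategorification is functorial under composition of our functors. I would first observe that every building block of $\Phi(T)$ is either a derived tensor product with a bimodule (for the braiding functors $\mathbb{B}_{\si_i^{\pm 1}}$, the quantum trace $\mathbb{T}$ and its analogues) or a derived Hom with such a bimodule (for the coevaluation $\mathbb{K}$). Each such functor sends $K_0(\cat^{\bla})$ to $K_0(\cat^{\bmu})$ via the class-of-bimodule action, and under composition of functors these descend to composition of maps on $K_0$. The needed finiteness for this to make sense on the bounded derived categories $\Dbe$ comes from the finite projective resolution of $L_\la$ (cited as \cite[Theorem 3.15]{CTP}), which gives the coevaluation bimodule $\coe^{\bla^+}_\bla$ finite projective dimension as a left module.

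Next I would match each elementary tangle to its RT structure map on $K_0$: Corollary \ref{br-cat} gives this for positive crossings (and the negative crossings by taking the adjoint equivalence); Propositions \ref{coev}, \ref{trace}, and \ref{qt-cat} give it for each flavor of cup and cap; and the definition of the ribbon functor $\mathbb{R}_i$ in Section \ref{sec:ribbon} was rigged precisely so that the ST ribbon element acts correctly on $K_0$. The S-move of Proposition \ref{S-move} confirms that the orientation-reversal identities used in the RT construction hold at the categorical level, and in particular on $K_0$. Thus each local decategorification matches the corresponding structure morphism in the ribbon category of $U_q(\fg)$-modules with the ST ribbon structure.

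The RT invariant is by definition the composition, along the chosen projection of $T$, of exactly these structure maps, so the induced map $K_0(\cat^{\bla})\to K_0(\cat^{\bmu})$ of $\Phi(T)$ coincides with it. For a closed link $L$, the source and target are $\cat^{\emptyset}=\Dbe(\mathsf{gVect})$, whose Grothendieck group is the Laurent series ring $\Z((\qD))$ via graded Euler characteristic; hence $\chi_q(\Phi(L)(\K))$ equals the RT evaluation of $L$ with the ST ribbon element, proving the ``in particular'' clause.

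The main obstacle, such as it is, lies in bookkeeping rather than in any new idea: one must check that the bimodule compositions realizing, say, $\mathbb{T}\circ\mathbb{K}$ or $\mathbb{B}_{\si}\circ\mathbb{K}$ actually have the expected $K_0$-class (the relevant Tor-vanishing used in Proposition \ref{S-move} and in Lemma \ref{pro-sta} is the crucial input here), and that the sign and $q$-shift conventions in $\mathbb{R}_i$, $\mathbb{K}$, and $\mathbb{T}$ combine correctly — no well-definedness of the functor itself under Reidemeister moves is needed, since we work with a fixed projection and invoke tangle-invariance only for the already-classical RT side of the equation.
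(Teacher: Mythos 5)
Your proposal is correct and follows the same strategy as the paper: reduce to the elementary tangle pieces (crossings, cups, caps, ribbon twists) and invoke Corollary \ref{br-cat}, Propositions \ref{coev}, \ref{trace}, \ref{qt-cat}, and the calibration of $\mathbb{R}_i$ in Section \ref{sec:ribbon}, plus the observation that decategorification is functorial under composition. The paper's own proof is a one-liner making exactly this reduction; your extra remarks (the S-move, Tor-vanishing, finite projective dimension of $L_\la$) are harmless but not actually needed here, since invariance under moves is a separate issue handled in Theorem \ref{ind}.
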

\begin{proof}
  We need only check this for each elementary tangle, which was done in  Corollary \ref{br-cat}, Section \ref{sec:ribbon} and Proposition \ref{qt-cat}.
\end{proof}

\begin{thm}
The cohomology of $\Phi(T)(\K)$ is finite-dimensional in each homological degree, and each graded degree is a complex with finite dimensional total cohomology.  In particular the bigraded Poincar\'e series 
\[\varphi(T)(q,t)=\sum_{i}(-t)^{-i}\dim_qH^i(\Phi(T)(\K))\]  is a well-defined element of $\Z[\qD,q^{\nicefrac {-1}D}]((t))$.
\end{thm}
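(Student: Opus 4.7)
The plan is to reduce the theorem to the claim that $\Phi(T)(\K)\in\cat^\emptyset$, and then to observe that every object of $\cat^\emptyset$ automatically carries the required finiteness. Since $\alg^\emptyset\cong\K$ (every black strand is violating once $\bla=\emptyset$), the category $\cata^\emptyset$ is the category of finite-dimensional graded vector spaces, so by definition of $\cat^\emptyset=\Dbe(\cata^\emptyset)$ every object $X$ is quasi-isomorphic to a complex $C^\bullet$ of finite-dimensional graded vector spaces with $C^{j,i}=0$ whenever $j>j_{\max}$ or $i+j<N$ for some constants $j_{\max},N$ depending on $X$. Each $H^j(X)$ is then a subquotient of the finite-dimensional $C^j$, so $\dim_q H^j\in\Z[\qD,q^{\nicefrac{-1}{D}}]$; and for each fixed internal degree $m$ the two-sided bound $N-m\le j\le j_{\max}$ confines $H^{j,m}$ to a finite range of $j$, giving finite total cohomology in that graded degree. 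Finally, $\sum_i(-t)^{-i}\dim_q H^i$ has no powers of $t$ below $-j_{\max}$ since $H^i=0$ for $i>j_{\max}$, so it lies in $\Z[\qD,q^{\nicefrac{-1}{D}}]((t))$.

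To prove $\Phi(T)(\K)\in\cat^\emptyset$ I would verify that each elementary functor used to build $\Phi(T)$ sends $\cat^\bla$ into $\cat^{\bla'}$ while preserving both the homological upper bound and the bidegree support bound $i+j\ge N$. For the braiding functors $\mathbb{B}_\si$ this is Lemma \ref{pro-sta}, since the grading shifts attached to each crossing are finite. For the coevaluation and evaluation for a single dual pair, $\mathbb{K}^{\la,\la^*}_\emptyset$ and $\mathbb{E}^\emptyset_{\la^*,\la}$, preservation follows from the finite projective resolution of $\Lco$ constructed in Proposition \ref{sta-res}. The general coevaluation $\mathbb{K}^{\bla^+}_\bla$ (and its relative, the quantum cotrace) is exact because $\coe^{\bla^+}_\bla$ is projective as a right $\alg^\bla$-module. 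For the general quantum trace $\mathbb{T}^{\bla^+}_\bla$ and evaluation, the explicit description $P^\kappa_\Bi\otimes\coe\cong\fF_{\Bi''}^{\kappa''}\bigl(\mathbb{S}^{\bla';(\mu,\mu^*)}(P^\kappa_{\Bi'}\boxtimes L_\mu)\bigr)$ together with the finite resolutions of $L_\mu$ and $\Lco$ will bound the Tor-dimensions enough to keep the image inside $\cat^\bla$.

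The main delicate step will be this last one: the quantum trace was emphasized to be ``very far from exact,'' and its derived tensor with $\coe^{\bla^+}_\bla$ genuinely produces complexes unbounded below in homological degree. What must be checked carefully is that, in spite of this unboundedness, the bidegree support condition $C^{j,i}=0$ for $i+j\ll 0$ is still preserved, which follows from the resolution of $\Lco$ in Proposition \ref{sta-res} growing its homological and internal degrees in tandem. Once this control is in place, the composition $\Phi(T)$ maps $\cat^\emptyset$ into itself, so $\Phi(T)(\K)\in\cat^\emptyset$, and the analysis of the first paragraph then yields both finiteness claims and the asserted Poincar\'e series.
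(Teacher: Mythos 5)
Your proposal is correct and follows the paper's own route: the published proof is essentially your first paragraph, describing $\cat^\emptyset$ as complexes of finite-dimensional graded vector spaces that are bounded above in homological degree $j$ and have $i+j$ bounded below, and extracting both finiteness claims and the well-definedness of $\varphi(T)$ directly from that description. The preservation of the subcategories $\cat^\bla$ by each elementary functor, which you spell out, is relegated in the paper to the discussion accompanying each functor's definition (Lemma \ref{pro-sta} for the braiding, and the remark immediately after the definitions of $\mathbb{K}^{\la,\la^*}_\emptyset,\mathbb{E}^\emptyset_{\la^*,\la}$ citing the companion paper's Theorem~3.15 for the projective resolution of $\Lco$ in $\Dbe(\cata^\bla)$ --- not Proposition \ref{sta-res}, which produces a finite resolution by objects built from standards rather than a projective resolution; a minor citation slip, not a flaw).
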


\begin{proof}
We note that the category $\cat^\emptyset$ is the category of complexes of graded finite dimensional vector spaces $$\cdots\longleftarrow M^{i+1}\longleftarrow M^i\longleftarrow M^{i-1}\longleftarrow\cdots$$ such that $M^i=0$ for $i\gg 0$ and for some $k$, the vector space $M^i$ is concentrated in degrees above $k-i$.  Thus, $\Phi(T)(\K)$ lies in this category.  In particular, each homological degree and each graded degree of $\Phi(T)(\K)$ is finite-dimensional.  
\end{proof}

The only case where the invariant is known to be finite dimensional is when the representations $\bla$ are {minuscule}; recall that a weight $\mu$ is called {\bf minuscule} if every weight with a non-zero weight space in $V_\mu$ is in the Weyl group orbit of $\mu$. 

\begin{prop}\label{fin-dim}
If all $\la_i$ are minuscule, then the cohomology of $\Phi(T)(\K)$ is finite-dimensional.
\end{prop}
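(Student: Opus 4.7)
The plan is to show that when all labels are minuscule, $\Phi(T)(\K)$ is not merely an object of $\cat^\emptyset$ but lies in its full subcategory $\cat^\emptyset_{\mathsf{per}}$ of perfect complexes; since $\cat^\emptyset$ is a derived category of graded vector spaces, a perfect complex there is automatically a bounded complex of finite-dimensional graded pieces, hence has finite-dimensional total cohomology. Thus the task reduces to two claims: $(i)$ in the minuscule case every intermediate category satisfies $\cat^\bla = \cat^\bla_{\mathsf{per}}$, and $(ii)$ each elementary functor appearing in the definition of $\Phi(T)$ preserves perfectness.

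For $(i)$, I would invoke the characterization noted just before Proposition~\ref{serre}: the subcategory $\cat^\bla_{\mathsf{per}}$ contains every simple module exactly when all entries of $\bla$ are minuscule. Since $\cata^\bla$ is finite length and the simples generate, it follows that every bounded complex of modules has finite projective dimension, i.e.\ $\alg^\bla$ has finite global dimension and $\cat^\bla = \cat^\bla_{\mathsf{per}}$.

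For $(ii)$, the braiding functors $\mathbb{B}_\si$ are equivalences by Theorem~\ref{braid-act}, so they preserve perfectness on the nose. For the (co)evaluation and quantum (co)trace functors, which are built from $\RHom$ or derived tensor against the bimodule $\coe^{\bla^+}_\bla$, I would check that $\coe^{\bla^+}_\bla$ is perfect as a bimodule: it is already projective as a right module by construction, and perfectness as a left module reduces via the identification
\[
P^\kappa_\Bi \otimes \coe^{\bla^+}_\bla \;\cong\; \fF^{\kappa''}_{\Bi''}\!\bigl(\mathbb{S}^{\bla';\mu,\mu^*}(P^\kappa_{\Bi'}\boxtimes L_\mu)\bigr)
\]
together with the finite projective resolution of $L_\mu$ from Proposition~\ref{sta-res}, which is available precisely because $\mu$ is minuscule.

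With $(i)$ and $(ii)$ in hand, an induction on the number of elementary tangles composing $T$ shows that $\Phi(T)(\K)$, starting from the trivially perfect object $\K\in\cat^\emptyset_{\mathsf{per}}$, remains perfect throughout, so ends as a perfect complex in $\cat^\emptyset$, from which finite total cohomology is immediate. The main obstacle I anticipate is the left perfectness of $\coe^{\bla^+}_\bla$: one must carefully track how the finite projective resolution of the minuscule $L_\mu$ propagates through the standardization $\mathbb{S}^{\bla';\mu,\mu^*}$ and the induction functors $\fF^{\kappa''}_{\Bi''}$, ruling out new infinite tails of higher $\Tor$'s coming from the outer strands.
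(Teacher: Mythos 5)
Your argument is essentially the paper's: both hinge on the fact that when every $\la_i$ is minuscule the algebras involved have finite global dimension (you cite the remark before Proposition~\ref{serre}; the paper equivalently observes that the standards then form a full exceptional collection with finite projective resolutions), after which every elementary functor in $\Phi(T)$ visibly preserves bounded complexes with finite-dimensional total cohomology. One clarification about the ``main obstacle'' you flag at the end: it is not an obstacle at all. Once claim $(i)$ gives finite global dimension of $\alg^{\bla^+}$, the bimodule $\coe^{\bla^+}_\bla$ --- being a finite-dimensional, hence finitely generated, left $\alg^{\bla^+}$-module --- automatically has a finite projective resolution, so there is nothing to trace through the standardization and induction functors; the infinite $\Tor$-tails you worry about are exactly what $(i)$ has already excluded.
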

\begin{proof}
If all $\la_i$ are minuscule, then the standard modules form a full exceptional collection.  Any category with a finite full exceptional collection where each element has a finite projective resolution has finite projective dimension.  Thus, in this case, the functor given by $\RHom$ or $\overset{L}\otimes$ with a finite dimensional module preserves being quasi-isomorphic to a finite length complex.
\end{proof}

\subsection{The unknot for $\fg=\mathfrak{sl}_2$}
Unfortunately,  the cohomology of the complex $\Phi(T)(\K)$ is not always finite-dimensional.  This can be seen in examples as simple as the unknot $U$ for $\fg=\mathfrak{sl}_2$ and label $2$.   

In this case, the module $L_2$ with has a standard resolution of the form 
\[0\longrightarrow S^{(0,0)}_{1^2}(-2)\longrightarrow S^{(0,1)}_{1,1}/(y_1+y_2)(-1)\longrightarrow S^{(0,2)}_{1^2}\longrightarrow L_{\bla}\longrightarrow 0.\]

We let $A=\End_{\cat^{2,2}}(S^{(0,1)}_{1,1},S^{(0,1)}_{1,1})\cong \K[y_1,y_2]/(y_1^2,y_2^2)$; the middle piece of the semi-orthogonal decomposition is equivalent to representations of this algebra.

Taking $\otimes$ of this resolution to its dual, we observe that all $\Tor$'s vanish between terms that do not lie in the same piece of the semi-orthogonal decomposition, so \begin{multline*} \Tor^\bullet(\Lco,\Lco)=\Tor^\bullet(S^{(0,2)}_{1^2},(S^{(0,2)}_{1^2})^\star)\\ \oplus \Tor^\bullet(S^{(0,1)}_{1,1}/(y_1+y_2),(S^{(0,1)}_{1,1}/(y_1+y_2))^\star)[2](-2)\oplus \Tor^\bullet(S^{(0,2)}_{1^2},(S^{(0,2)}_{1^2})^\star)[4](-4) \\
\cong \K\oplus\Tor^\bullet_{A}(A/(y_1+y_2)A,A/(y_1+y_2)A)[2](-2)\oplus  \K[4](-4)\\
\end{multline*}
The module $A/(y_1+y_2)A$ has a minimal projective resolution given by \[\cdots\overset{y_1+y_2}\longrightarrow A(-4)\overset{y_1-y_2}\longrightarrow A(-2)\overset{y_1+y_2}\longrightarrow  A\longrightarrow A/(y_1+y_2)A\longrightarrow 0.\]

which after taking $\otimes$ becomes \[\cdots\qquad A/(y_1+y_2)A(-4)\overset{y_1-y_2}\longrightarrow A/(y_1+y_2)A(-2)\qquad  A/(y_1+y_2)A\overset{\sim}\longrightarrow A/(y_1+y_2)A\longrightarrow 0.\]

Thus, we have that \[\Tor^i_{A}(A/(y_1+y_2)A,A/(y_1+y_2)A)\cong \begin{cases}A/(y_1+y_2)A & i=0\\
\K(-2i) & i>0, \text{ odd}\\
\K(-2i-2) & i>0, \text{ even}\end{cases}\]

Thus, we have that 
\begin{prop}
\(\displaystyle \vp(U)=q^{-2}t^2+1+q^2t^{-2}+\frac{q^{-2}-q^{-2}t}{1-t^{2}q^{-4}}\).
\end{prop}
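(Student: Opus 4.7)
The plan is to assemble the Poincar\'e series directly from the $\Tor$ data already in place; the only substantive step beyond bookkeeping is resumming an infinite geometric tail.

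First, I would identify $\Phi(U)(\K)$ with a shift of the derived tensor product $\Lco\overset{L}{\otimes}_{\alg^{2,2}}\dot{\Lco}$. Because the closed unknot is the composition of a cup functor (coevaluation or cotrace) with the matching cap (quantum trace or evaluation), and only the cup carries an explicit shift $(2\llrr)[-2\rcl]$, this is the sole overall shift to be tracked. For $\fg=\mathfrak{sl}_2$ and $\la=2$ one reads off $2\llrr=2$ and $2\rcl=2$.

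Next, I would substitute the decomposition
\[
\Tor^\bullet(\Lco,\Lco)\;\cong\;\K\;\oplus\;\Tor^\bullet_A(A/(y_1+y_2)A,\,A/(y_1+y_2)A)[2](-2)\;\oplus\;\K[4](-4)
\]
established immediately above the statement. The two outer $\K$-summands, combined with the overall cup shift, contribute the monomials $q^{-2}t^2$ and $q^2t^{-2}$ to $\vp(U)$, while the degree-zero term of the middle summand supplies the constant $1$.

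Finally, the higher Tors of the middle summand, namely $\K(-2i)$ for odd $i>0$ and $\K(-2i-2)$ for even $i>0$, organize into consecutive odd/even pairs whose successive contributions differ by multiplication by $t^2q^{-4}$. Summing this two-periodic family as a geometric series in $t^2q^{-4}$ and collapsing yields the closed-form rational function $\frac{q^{-2}-q^{-2}t}{1-t^2q^{-4}}$, which together with the three monomials above gives the formula in the statement.

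The main obstacle is purely bookkeeping: tracking the homological versus internal grading shifts consistently through the cup functor, through the $[2](-2)$ and $[4](-4)$ shifts in the semi-orthogonal decomposition, and through the sign in $(-t)^{-i}\dim_q H^i$ that defines $\vp$. Once these conventions are pinned down, nothing further is needed; in particular the computation only succeeds because this is the one case where $\la$ is not minuscule, and so by Proposition \ref{fin-dim} we really expect (and find) a complex with infinite-dimensional total cohomology whose Poincar\'e series is rational rather than polynomial.
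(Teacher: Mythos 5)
Your proposal is correct and follows the same route the paper implicitly takes: the proposition is just the bookkeeping consequence of the $\Tor$ computation displayed immediately above it, combined with the cup shift $(2\llrr)[-2\rcl]=(2)[-2]$ and the definition $\vp=\sum_i(-t)^{-i}\dim_q H^i$. One small imprecision worth noting: the $\Tor^0_A$ term is the two-dimensional $A/(y_1+y_2)A$, whose internal-degree-$0$ part gives the ``$1$'' while its internal-degree-$(-2)$ part is actually the $k=0$ term of the geometric tail $\frac{q^{-2}-q^{-2}t}{1-t^2q^{-4}}$ rather than a ``higher Tor,'' so the split between ``constant'' and ``geometric series'' is by internal degree rather than by $\Tor$-index; with that clarified, the pairing of $\Tor^{2k}_A$ with $\Tor^{2k+1}_A$ and the common ratio $t^2q^{-4}$ give exactly the claimed rational function.
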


It is easy to see that the Euler characteristic is
$q^{-2}+1+q^2=[3]_q$, the quantum dimension of $V_2$.  As this example
shows, infinite-dimensionality of invariants is extremely typical
behavior, and quite subtle.  This same phenomenon of infinite dimensional vector spaces categorifying integers has also appeared in the work of Frenkel, Sussan and Stroppel \cite{FSS}, and in fact, their work could be translated into the language of this paper using the equivalences of \cite[\S 4]{WebCTP}; it would be quite interesting to work out this correspondence in detail.

\begin{conj}
The invariant $\Phi(L)$ for a link $L$ is only finite-dimensional if all components of $L$ are labeled with minuscule representations. 
\end{conj}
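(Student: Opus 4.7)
My plan is to prove the contrapositive: if $L$ has a component $C$ labeled by a non-minuscule representation $\la$, then the cohomology of $\Phi(L)(\K)$ is infinite-dimensional.

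First I would handle the unknot base case $U_\la$, generalizing the $\mathfrak{sl}_2$, $\la=2$ computation of the excerpt. That example isolates the mechanism: $L_\la$ has infinite projective dimension in $\cata^{\la^*,\la}$, and the closed unknot invariant is essentially $\Tor^\bullet(\Lco,\Lco)$, which via the semi-orthogonal decomposition reduces to $\Ext$ over an algebra like $A = \K[y_1,y_2]/(y_1^2,y_2^2)$ with a non-regular diagonal module. I would generalize this in two steps: (a) show that $\la$ non-minuscule produces a weight $\mu$ of $V_\la$ with $\mu \notin W\cdot\la$, hence a non-extremal stratum in the semi-orthogonal decomposition of $\cata^{\la^*,\la}$; (b) argue via Proposition~\ref{sta-res} that the standard resolution of $\Lco$ cannot terminate when such a stratum exists, so both the $\Ext^\bullet(\Lco,\ocL)$-type and $\Tor^\bullet(\Lco,\Lco)$-type computations have unbounded support, producing an infinite Poincar\'e tail in $\varphi(U_\la)$.

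Next I would propagate the base case to an arbitrary link carrying a non-minuscule component. Using isotopy and the S-move of Proposition~\ref{S-move}, the component $C$ can be arranged to enter and exit the diagram through a coevaluation / quantum trace pair, so that $\Phi(L)$ factors as
\[
\Phi(L) \;=\; \Phi(L') \circ \mathbb{T}^{\bla\la\la^*\bla'}_{\bla\bla'} \circ \Psi \circ \mathbb{K}^{\bla\la\la^*\bla'}_{\bla\bla'},
\]
where $\Psi$ is built from braiding functors (an equivalence by Theorem~\ref{braid-act}) and $\Phi(L')$ corresponds to the link obtained by deleting $C$. Since $\Psi$ is an equivalence and the remaining cup/cap/crossing functors have bounded cohomological amplitude on each finite-dimensional summand, the infinite tail contributed by the unknot computation should survive composition with $\Psi$ and $\Phi(L')$.

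The main obstacle is ruling out that this infinite tail is cancelled by the remaining components of $L$; a priori an outer trace with other strands could annihilate the offending cohomology. The cleanest route is to identify a persistent leading term in the bigraded Poincar\'e series: in the $\mathfrak{sl}_2$, $\la=2$ case the tail $\frac{q^{-2}-q^{-2}t}{1-t^2q^{-4}}$ is a genuine power series whose terms sit in strictly decreasing bidegrees controlled by the projective dimension of $L_\la$ in the middle stratum. I expect the technical heart of the proof to be showing that for any non-minuscule $\la$ the analogous tail occupies bidegrees whose slope forces it to survive tensoring with the bounded finite-dimensional complex obtained from the rest of $L$; this likely requires a bounded-below estimate in terms of the Euler form $\langle-,-\rangle_1$ and the Serre functor of Proposition~\ref{serre} to preclude accidental cancellation.
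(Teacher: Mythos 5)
This statement is a \emph{conjecture} in the paper: no proof is given there, so there is nothing to compare your argument against, and what you have written is a plan of attack rather than a proof. Two points in it are genuinely problematic. First, your step (b) misreads Proposition \ref{sta-res}: the resolution of $\Lco$ by objects from the strata of the semi-orthogonal decomposition \emph{does} terminate --- the proposition explicitly gives $M_j=0$ for $j>2\rcl$. In the worked $\mathfrak{sl}_2$ example the infinite-dimensionality does not come from an infinite standard resolution; it comes from the fact that within a non-extremal stratum the relevant endomorphism algebra (there, $A=\K[y_1,y_2]/(y_1^2,y_2^2)$) has infinite global dimension, so $\Tor^\bullet_A$ of the diagonal-type module with itself is nonzero in infinitely many degrees. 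A correct base case would therefore have to show that for every non-minuscule $\la$ some non-extremal weight stratum contributes a module of infinite projective dimension over its stratum algebra \emph{and} that these contributions do not cancel in the spectral sequence computing $\Tor^\bullet(\Lco,\Lco)$ --- neither of which follows from anything you cite.

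Second, and more seriously, the propagation step is not an argument at all: you correctly identify that an outer trace over the remaining components could annihilate the infinite tail, and then defer this to ``the technical heart of the proof'' without supplying it. Tensoring an unbounded complex with a bounded one can certainly kill cohomology in infinitely many degrees (the differentials of the total complex mix the gradings), so the survival of the tail is exactly the content of the conjecture and cannot be waved through with an expectation about slopes of bidegrees. Absent that step, and absent a correct mechanism in the base case, the proposal does not establish the statement; it is a reasonable heuristic for why the conjecture should be true, consistent with the one example the paper computes, but it is not a proof.
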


\subsection{Independence of projection}

While Theorem \ref{decat-all} shows the action on the Grothendieck group is independent of the presentation of the tangle, it doesn't establish this for the functor $\Phi(T)$ itself.

\begin{thm}\label{ind}
The functor $\Phi(T)$ does not depend (up to isomorphism) on the projection of $T$.
\end{thm}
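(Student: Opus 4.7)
The plan is to reduce the theorem to a finite checklist of local Reidemeister-type moves on oriented framed tangle diagrams, and then show each induces a natural isomorphism of functors. A standard fact (see \cite[\S 4]{CP}) is that two diagrams represent the same oriented framed tangle if and only if they are connected by a finite sequence of: (a) Reidemeister III together with Reidemeister II for like-oriented strands; (b) the S-move, i.e.\ cancellation of a cup-cap pair joined by a single arc; (c) the framed Reidemeister I move (a kink cancels against a ribbon twist); (d) pitchfork moves, which slide a crossing through a cup or cap; and (e) Reidemeister II for oppositely oriented strands.

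First I would dispense with the moves that have already been verified earlier in the paper. Item (a) is exactly the weak braid groupoid action of Theorem \ref{braid-act}. Item (b) is the content of Proposition \ref{S-move}. Item (c) holds by the very definition of the ribbon functor $\mathbb{R}_i$: it was chosen so that $\mathbb{B}^2\Lco \cong \mathbb{R}_1^{-1}\mathbb{R}_2^{-1}\Lco$ via Proposition \ref{Lco-bra}, which is exactly the statement that a positive ribbon twist cancels one of the crossings in a kink, as illustrated in Figure \ref{Lco-inv}.

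The remaining families are (d) and (e). For the pitchforks (d), my strategy is to reduce to the essentially two-strand case using Proposition \ref{bra-commute}: since $\mathbb{B}_{\sigma}$ commutes with $\fF_i$ and $\fE_i$, and the bimodule $\coe^{\bla^+}_{\bla}$ is built from $\Lco$ together with functors generated by the $\tU$-action, any pitchfork involving extra strands factors through the two-strand situation. In that base case, the required isomorphism between moving a crossing above versus below a cup is produced by the evident explicit bimodule map (drawn by pulling the crossing across the cup); its invertibility follows from the explicit basis of Proposition \ref{minimal-basis} combined with the control over the action of $\mathbb{B}$ on $\Lco$ afforded by Proposition \ref{Lco-bra}. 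For (e), the oppositely-oriented Reidemeister II, I would deduce it from (d) together with (b) by a formal argument: combining the S-move with naturality of the braiding through a cup lets one identify $\mathbb{B}_{\sigma_k}^{-1}$ on a dually-oriented pair as an adjoint of $\mathbb{B}_{\sigma_k}$ on a like-oriented pair, and the fact from Theorem \ref{braid-act} that $\mathbb{B}_{\sigma_k}$ is an equivalence forces this adjoint to compose to the identity.

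The hard part will be making the pitchfork moves (d) fully rigorous. While they are forced at the level of Grothendieck groups by Proposition \ref{decat-all}, lifting this to coherent natural isomorphisms of functors requires careful bookkeeping of the homological and internal grading shifts introduced by the ST ribbon element, and a check that the diagrammatic isomorphisms really do respect the bimodule structure on $\coe^{\bla^+}_{\bla}$. Once these verifications are in place, invariance under the listed moves assembles into the statement that $\Phi(T)$ depends only on the isotopy class of $T$.
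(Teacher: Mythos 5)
Your proposal is correct and follows essentially the same route as the paper: reduce to a finite list of framed Reidemeister-type moves, observe that RI, RII, RIII and the S-move are already handled by Proposition \ref{Lco-bra}, Theorem \ref{braid-act} and Proposition \ref{S-move}, and isolate the pitchfork move as the one remaining verification, which is then proved by commuting past the $\tU$-action to reduce to $P_\emptyset$ and exhibiting an explicit diagrammatic isomorphism whose bijectivity is checked against the basis of Proposition \ref{minimal-basis}. The only cosmetic difference is your separate treatment of oppositely-oriented Reidemeister II, which in the paper's setup is automatic because the crossing functors depend only on the sign of the crossing and $\mathbb{B}_{\si^{-1}}$ is by definition the adjoint of the equivalence $\mathbb{B}_{\si}$.
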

\begin{proof}
  We have already proved the ribbon Reidemeister moves in at least one
  position: RI in Proposition \ref{Lco-bra} and RII and RIII as part of Theorem
  \ref{braid-act}, and also the ``S-move'' shown in Figure~\ref{Smove} in Proposition
  \ref{S-move}. There is only one move of importance left for us
  to establish: the pitchfork move, shown in Figure~\ref{pitch-pic}.
  
  Once we have established this move, we can easily show the others
  which are necessary.  The illustrative example of the 
  ``$\chi$-move'' is given in
  Figure \ref{chimove}. The other moves in the list
  of Ohtsuki \cite[Theorem 3.3]{Oht} follow in  the same way.

   \begin{figure}
     \centering
 \tikzset{knot/.style={draw=white,double=red,line width=3.5pt, double
     distance=1.2pt}}
 \begin{tikzpicture}[very thick,knot,xscale=1.3]
 \node (a) at (-7,0){\begin{tikzpicture}[xscale=.8]
 \draw[knot,postaction={decorate,decoration={markings,
    mark=at position .3 with {\arrow[red,scale=.4]{>}}}}] (-2,1)
to[out=-45,in=180] (-1,.1) to[out=0,in=180] (0,.9) to[out=0,in=135]
(1,0);
\draw[knot,postaction={decorate,decoration={markings,
    mark=at position .7 with {\arrow[red,scale=.4]{<}}}}] (-1,1) --(0,0);
  \end{tikzpicture}};
\node (b) at (-3.8,0){\begin{tikzpicture}[xscale=.8]
 \draw[knot,postaction={decorate,decoration={markings,
    mark=at position .3 with {\arrow[red,scale=.4]{>}}}}] (-2,1)
to[out=-45,in=180] (-1,.1) to[out=0,in=180] (0,.9) to[out=0,in=135]
(1,0);
\draw[knot,postaction={decorate,decoration={markings,
    mark=at position .7 with {\arrow[red,scale=.4]{<}}}}] (1,1) --(0,0);
\end{tikzpicture}};
\node (c) at (-.7,0){\begin{tikzpicture}[xscale=.8]
\draw[knot,postaction={decorate,decoration={markings,
    mark=at position .25 with {\arrow[red,scale=.4]{<}}}}] (1,1)
--(-.3,-.3);
\draw[knot,postaction={decorate,decoration={markings,
    mark=at position .75 with {\arrow[red,scale=.4]{>}}}}] (-.3,1) --(1,-.3);
\end{tikzpicture}};
\draw[->,black,thick] (a) -- (b);\draw[->,black,thick] (b) -- (c);
 \end{tikzpicture}
     \caption{The ``$\chi$-move''}
     \label{chimove}
  \end{figure}
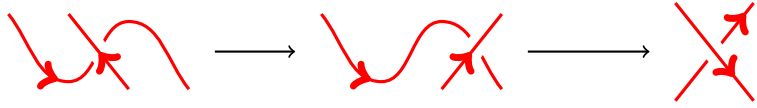

  So, let us turn to the pitchfork.  We may
  assume that the pictured red strands are the only ones.  We must
  prove that this move holds for all reflections and orientations.
  The vertical reflection of the version shown follows from that
  illustrated by adjunction.  We may assume that the cup is clockwise
  oriented, since the counter clockwise move can be derived from that
  one using Reidemeister moves II and III.  The orientation of the
  ``middle tine'' is irrelevant, so we will ignore it.

For the orientation shown in Figure \ref{pitch-fig}, we need only show this move holds for $P_\emptyset$ again, since we again have commutation with Hecke functors.

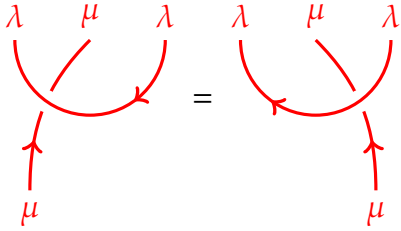
\begin{figure}[ht]
\begin{tikzpicture}[very thick, red, xscale=-1]
\draw[postaction={decorate,decoration={markings,
    mark=at position .3 with {\arrow[red,scale=1.3]{>}}}}] (-.8,-1)
to[out=90,in=-135] node[pos=.55,inner sep=3pt,fill=white,circle]{}
node[at start,below]{$\mu$} node[at end,above]{$\mu$}  (0,1);
\draw[postaction={decorate,decoration={markings,
    mark=at position .7 with {\arrow[red,scale=1.3]{>}}}}] (-1,1)  to[out=-90,in=180]   node[at start,above]{$\la$}(0,0) to[out=0,in=-90]  node[at end,above]{$\la$} (1,1);
\node[black] at (1.5,.2){=};

\draw[postaction={decorate,decoration={markings,
    mark=at position .3 with {\arrow[red,scale=1.3]{>}}}}] (3.8,-1)
to[out=90,in=-45] node[pos=.55,inner sep=3pt,fill=white,circle]{}
node[at start,below]{$\mu$} node[at end,above]{$\mu$} (3,1);
\draw[postaction={decorate,decoration={markings,
    mark=at position .3 with {\arrow[red,scale=1.3]{>}}}}] (2,1) to[out=-90,in=180]  node[at start,above]{$\la$} (3,0) to[out=0,in=-90] node[at end,above]{$\la$}  (4,1);
\end{tikzpicture}
\caption{The ``pitchfork'' move}
\label{pitch-pic}
\end{figure}

We have two functors $\cat^{\la,\la^*}_0\to \cat^{\la,\mu,\la^*}_\mu$ given by
\begin{equation*}
\Pi_1=\mathbb{B}_{\si_1^{-1}}\circ \mathbb{S}^{\mu,\la+\la^*}(P_\emptyset\boxtimes -)\hspace{1in}\Pi_2=\mathbb{B}_{\si_2}\circ \mathbb{S}^{\la+\la^*,\mu}(-\boxtimes P_\emptyset).
\end{equation*}

\begin{lemma}\label{pitch}
The functors $\Pi_1$ and $\Pi_2$ coincide.
\end{lemma}
\begin{proof}
  First, we multiply both sides by $\mathbb{B}_{\si_2}$, so we must show
  that we have isomorphisms of functors $$
  \mathbb{S}^{\mu,\la+\la^*}(P_\emptyset\boxtimes
  -)\cong\mathbb{B}_{\si_1}\circ \mathbb{B}_{\si_2}\circ\mathbb{S}^{\la+\la^*,\mu}(-\boxtimes P_\emptyset).$$ Since
  they generate the category, we need only show this isomorphism can
  be exhibited on the level of projectives.

The isomorphism is given by Figure \ref{pitch-fig}, and is essentially the same as that of Proposition \ref{sta-braid}.  We note
that this element has degree zero because we are assuming that the
roots on the black strands add to $\la+\la^*$.  Any diagram in the module $\mathbb{B}_{\si_1}\mathbb{B}_{\si_2}\mathbb{S}^{\la+\la^*,\mu}(P_{\Bi}^\kappa\boxtimes P_\emptyset)$  can be prefixed by this element, so the map is surjective.  Any element which is sent to 0 by adjoining this diagram is easily seen to be 0, since the standardly violating strand can be slid downward to become a violating strand, so the map is also injective.  \end{proof}

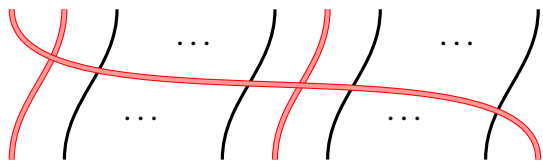
\begin{figure}[ht]
  \begin{tikzpicture}[very thick,xscale=3.5,yscale=-2]
  \draw[wei] (-1,.5) to[out=-90,in=90] (-.8,-.5);
  \draw (-.8,.5) to[out=-90,in=90] (-.6,-.5);
  \node at (-.5,.25) {$\cdots$};  \node at (-.3,-.25) {$\cdots$};
  \draw (-.2,.5) to[out=-90,in=90] (0,-.5);
  \draw[wei] (0,.5) to[out=-90,in=90] (.2,-.5);
  \draw (.2,.5) to[out=-90,in=90] (.4,-.5);
  \node at (.5,.25) {$\cdots$};  \node at (.7,-.25) {$\cdots$};
  \draw (.8,.5) to[out=-90,in=90] (1,-.5);
  \draw[wei] (1,.5) to[out=-90,in=90] (-1,-.5);
  \end{tikzpicture}
  \caption{The isomorphism of Lemma \ref{pitch}}\label{pitch-fig}
\end{figure}

The pitchfork move shown in Figure \ref{pitch-pic} follows from this
lemma, since two sides of the depicted move are $$-\otimes_{T} \Pi_1\Lco(2\llrr)[-2\rcl]\quad\text{ and }\quad-\otimes_T\Pi_2\Lco(2\llrr)[-2\rcl].$$ The only variation remaining to check is the case where
the move is reflected through the page (i.e. with the signs of the
crossings given reversed), but this follows from the lemma as well
since the two sides are \begin{equation*}-\otimes_T(\Pi_1\Lco)^\star(2\llrr)[-2\rcl]\quad\text{ and }\quad-\otimes_T(\Pi_2\Lco)^\star(2\llrr)[-2\rcl].\qedhere\end{equation*}
\end{proof}

Some care must be exercised with the normalization of these
invariants, since as we noted in Section \ref{sec:ribbon}, they are
the Reshetikhin-Turaev invariants for a slightly different ribbon
element from the usual choice.  However, the difference is easily
understood.  Let $L$ be a link drawn in the blackboard framing, and
let $L_i$ be its components, with $L_i$ labeled with $\la_i$.  Recall
that the {\bf writhe} $\wr(K)$ of a oriented ribbon knot is the
linking number of the two edges of the ribbon; this can be calculated
by drawing the link the blackboard framing and taking the difference
between the number of positive and negative crossings.  Here we give a
slight extension of the proposition of Snyder and Tingley relating the
invariants for different framings \cite[Theorem 5.21]{STtwist}:
\begin{prop}\label{schur-indicate}
  The  invariants attached to $L$ by the standard and Snyder-Tingley ribbon elements differ by the scalar $\prod_{i}(-1)^{2\rho^\vee(\la_i)\cdot(\wr(L_i)-1)}$.
\end{prop}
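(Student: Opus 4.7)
The plan is to identify the character on $\wela/\rola$ parametrizing the difference between the two ribbon elements and then apply the standard rescaling rule for Reshetikhin--Turaev invariants under such a character modification, reducing to the single-knot statement of \cite[Theorem 5.21]{STtwist}.

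First I would isolate the character. From the discussion in Section \ref{sec:ribbon}, the ST ribbon element acts on $V_\la$ by $(-1)^{2\rcl}q^{\lllr+2\llrr}$ while the standard ribbon element acts by $q^{\lllr+2\llrr}$; their ratio is the central group-like element of $U_q(\fg)$ acting on $V_\la$ by $\chi(\la):=(-1)^{2\rho^\vee(\la)}$. Since $\rho^\vee(\al_i)=1$ for each simple root, $\chi$ descends to a homomorphism $\wela/\rola\to\{\pm 1\}$, which is precisely the torsor datum parametrizing the choice of ribbon structure on the category of $U_q(\fg)$-modules.

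Next I would analyze how the RT invariant of $L$ transforms under this character modification. The R-matrix at a crossing is unaffected by the change of ribbon element, so only the twists and the pivotal structure used in quantum traces rescale. On the $i$th component, the writhe contribution $\theta_{\la_i}^{\wr(L_i)}$ rescales by $\chi(\la_i)^{\wr(L_i)}$, and the closure of the component in a quantum trace contributes one additional factor of $\chi(\la_i)$ through the pivotal change---visible already on the unknot, whose unframed invariant $\dim_q V_\la$ transforms by exactly $\chi(\la)$. Using $\chi(\la_i)^2=1$, the correction on the $i$th component is $\chi(\la_i)^{\wr(L_i)+1}=\chi(\la_i)^{\wr(L_i)-1}=(-1)^{2\rho^\vee(\la_i)(\wr(L_i)-1)}$, which is precisely Snyder--Tingley's formula for a single knot. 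Since inter-component crossings are built from R-matrices and flip maps independent of the ribbon element, the total correction factor is multiplicative in the components of $L$, giving the stated product.

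The main subtlety is the ``$\pm 1$'' shift in the exponent: without the extra factor of $\chi(\la_i)$ per component coming from the pivotal change, one would obtain $\chi(\la_i)^{\wr(L_i)}$ instead of $\chi(\la_i)^{\wr(L_i)-1}$. Pinning this factor down correctly on the unknot and verifying that crossings between distinct components contribute nothing to the ratio is where the real work sits; once these are checked, the rest is routine bookkeeping in the ribbon calculus and Snyder--Tingley's argument applies component by component.
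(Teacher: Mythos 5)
Your argument is correct, but it takes a genuinely different route from the paper's. You directly compute the ratio of the two Reshetikhin--Turaev invariants by analyzing the ribbon calculus: identify the character $\chi(\la)=(-1)^{2\rho^\vee(\la)}$ on $\wela/\rola$ that parametrizes the change of ribbon element, note that braidings are unaffected, and then track the contribution per component as $\chi(\la_i)^{\wr(L_i)}$ from twists together with one further $\chi(\la_i)$ from the pivotal element in the quantum trace, combining to $\chi(\la_i)^{\wr(L_i)-1}$ by $\chi^2=1$. This is essentially the Snyder--Tingley analysis for a single knot applied component by component, and it requires justifying, as you acknowledge, that each closed component yields exactly one net pivotal factor (the rotation-number count after straightening curls into explicit ribbon twists) and that mixed crossings contribute nothing. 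The paper avoids this detailed bookkeeping entirely: it observes only that the ratio of the two invariants and the claimed formula are both invariant under Reidemeister II, Reidemeister III, and crossing change (the latter changes each $\wr(L_i)$ by an even number, which is invisible since $\chi^2=1$, and leaves cups and caps untouched so the ratio is unchanged), and that these moves suffice to reduce any diagram to a framed unlink, where the formula is a direct computation. Your approach gives a more explicit explanation for \emph{why} the exponent is $\wr(L_i)-1$ rather than $\wr(L_i)$ --- the ``$-1$'' visibly comes from the pivotal factor at the closure --- while the paper's induction on crossing number is slicker and sidesteps the ribbon-calculus accounting that you correctly flag as the subtle point.
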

\begin{proof}
The proof is essentially the same as that of  \cite[Theorem 5.21]{STtwist} with a bit more attention paid to the case where the components have different labels.  The proof is an induction on the crossing number of the link.  The formula is correct for any framing of an unlink, which gives the base case of our induction.  

Now note that the ratio between the knot invariants only depends on the number of rightward oriented cups and caps, so both the ratio between the invariants for the usual and ST ribbon structures and the formula given are insensitive to Reidemeister II and III as well as crossing change (which changes the writhe, but by an even number).    Since these operations can be used to reduce any link to an unlink, we are done.
\end{proof}

Since one of the main reasons for interest in these quantum invariants
of knots is their connection to Chern-Simons theory and invariants of
3-manifolds, it is natural to ask:
\begin{ques}
  Can these invariants glue into a categorification of the
  Witten-Reshetikhin-Turaev invariants of 3-manifolds?
\end{ques}
\begin{rem}
The most naive ansatz for categorifying Chern-Simons theory, following
the development of Reshetikhin and Turaev \cite{RT91} would associate
\begin{itemize}
\item a category $\cC(\Sigma)$ to each surface $\Sigma$, and
\item an
object in $\cC(\Sigma)$ to each isomorphism of $\Sigma$ with the boundary of
a 3-manifold
\end{itemize}
such that 
\begin{itemize}
\item the invariants $\EuScript{K}$ we have given are the Ext-spaces
of this object for a knot complement with fixed generating set of
$\cC(T^2)$ labeled by the representations of $\fg$, and 
\item the categorification of the WRT
invariant of a Dehn filling is the Ext space of this object with
another associated to the torus filling. 
\end{itemize}
 While some hints of this
structure appear in the constructions of this paper, it's far from
clear how they will combine.
\end{rem}

\subsection{Functoriality}
One of the most remarkable properties of Khovanov homology is its functoriality with respect to cobordisms between knots \cite{Jac04}.  This property is not only theoretically satisfying but also played an important role in Rasmussen's proof of the unknotting number of torus knots \cite{Ras04}. Thus, we certainly hope to find a similar property for our knot homologies.  While we cannot present a complete picture at the moment, there are promising signs, which we explain in this section.  We must restrict ourselves to the case where the weights $\la_i$ are minuscule, since even the basic results we prove here do not hold in general.  We will assume this hypothesis throughout this subsection.

The weakest form of functoriality is putting a Frobenius structure on
the vector space associated to a circle.  This vector space, as we
recall, is $$A_\la=\Ext^\bullet(\Lco,\Lco)[2\rcl](2\llrr).$$  This algebra is naturally bigraded by the homological and internal gradings.
The algebra structure on it is that induced by the Yoneda product.  Recall that $\mathfrak{S}$ denotes the right Serre functor of $\cat^\bla$, discussed in Section \ref{sec:serre}.
\label{Frob-conj}
\begin{thm}
  For minuscule weights $\bla$, we have a canonical isomorphism $$\mathfrak{S}\Lco\cong
  \Lco(-4\llrr)[-4\rcl].$$ Thus, the functors $\mathbb{K}$ and
  $\mathbb{T}$ are biadjoint up to shift.

  In particular, $\Ext^{4\llrr}(\Lco,\Lco)\cong \Hom(\Lco,\Lco)^*$,
  and the dual of the unit $$\iota^*\colon\Ext^{4\llrr}(\Lco,\Lco)\to
  \K$$ is a symmetric Frobenius trace on $A_\la$ of degree $-4\llrr$
\end{thm}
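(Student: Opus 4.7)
The plan is to prove the three assertions in sequence: the formula for $\mathfrak{S}\Lco$, the biadjointness of $\mathbb{K}$ and $\mathbb{T}$ up to shift, and the existence of the Frobenius trace on $A_\la$. Throughout we exploit that for minuscule $\la$ the object $\Lco$ lies in $\cat^{\la,\la^*}_{\mathsf{per}}$ (by Proposition \ref{fin-dim}), so Serre duality is available.

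First, I would compute $\mathfrak{S}\Lco$ by combining Propositions \ref{serre} and \ref{Lco-bra}. Since $\tau$ is a positive lift of the longest element of $S_2$, the ribbon functor $\mathfrak{R}=\mathbb{B}_\tau^2$ equals $\mathbb{B}_{\sigma_1}^2$ on $\cat^{\la,\la^*}$. Applying Proposition \ref{Lco-bra} once gives $\mathbb{B}_{\sigma_1}\Lco\cong L_{\la^*}[-2\rcl](-2\llrr-\lllr)$ in $\cat^{\la^*,\la}$, and applying it a second time to $L_{\la^*}$---using that $\la^{**}=\la$ and the $W$-invariance of the form to ensure that $\rcl,\llrr,\lllr$ are unchanged when $\la$ is replaced by $\la^*$---gives $\mathfrak{R}\Lco\cong\Lco[-4\rcl](-4\llrr-2\lllr)$. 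Next, $\mathfrak{S}'$ contributes the pure grading shift $-\langle\la+\la^*,\la+\la^*\rangle+\langle\la,\la\rangle+\langle\la^*,\la^*\rangle$ (since $\Lco$ sits in the weight-zero piece indexed by $\al=\la+\la^*$), and combining this shift with the one from $\mathfrak{R}$ yields $\mathfrak{S}\Lco\cong\Lco(-4\llrr)[-4\rcl]$.

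Second, I would deduce biadjointness. By construction $\mathbb{T}=-\overset{L}\otimes\coe^{\bla^+}_{\bla}$ is left adjoint to $\mathbb{K}=\RHom(\coe^{\bla^+}_{\bla},-)$, so it suffices to show that the right adjoint of $\mathbb{K}$ agrees with $\mathbb{T}$ up to shift. For functors between derived categories of finite-dimensional graded algebras with an explicit Serre functor, the right adjoint of any functor is obtained from the left adjoint by conjugation with the Serre functors on source and target. The bimodule $\coe^{\bla^+}_{\bla}$ is, up to the action of $\tU$, built from $\Lco$; so the conjugation by $\mathfrak{S}$ reduces---by the first part of the theorem and the corresponding pure-shift action of $\mathfrak{S}$ on the trivial category $\Dbe(\mathsf{gVect})$---to an overall shift $[4\rcl](4\llrr)$. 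This gives biadjointness up to this shift.

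Third, I would extract the Frobenius structure. Serre duality on $\cat^{\la,\la^*}_{\mathsf{per}}$ together with the first part yields perfect pairings
\[
\Ext^i(\Lco,\Lco)\otimes\Ext^{4\rcl-i}(\Lco,\Lco)(-4\llrr)\longrightarrow\K,
\]
and setting $i=4\rcl$ yields $\Ext^{4\rcl}(\Lco,\Lco)\cong\Hom(\Lco,\Lco)^*(4\llrr)$. After absorbing the shift $[2\rcl](2\llrr)$ in the definition of $A_\la$, Yoneda composition becomes a symmetric, non-degenerate pairing of internal degree $-4\llrr$ on $A_\la$; symmetry of the trace follows because $\Lco$ is simple, so $\Hom(\Lco,\Lco)=\K$ and the pairing agrees with its own transpose up to a canonical scalar. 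The biadjointness unit from part two coincides, under this identification, with the inclusion of $\Hom(\Lco,\Lco)$ into $A_\la$, whose dual is exactly the Frobenius trace $\iota^*$.

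The main obstacle is the careful bookkeeping of graded shifts in the first step. Several formally different expressions---$\lllr$, $\langle\la,\la^*\rangle$, and $\langle\la+\la^*,\la+\la^*\rangle$---must cancel using $W$-invariance of the symmetric form and the identities $\langle\la^*,\la^*\rangle=\lllr$, $\langle\la^*,\rho\rangle=\llrr$, $\rho^\vee(\la^*)=\rcl$ to produce exactly the shift $(-4\llrr)[-4\rcl]$ with no residual cross-terms. Once this arithmetic is settled, parts two and three are essentially formal consequences of Serre duality for the derived category of a finite-dimensional graded algebra.
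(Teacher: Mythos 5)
Your overall strategy matches the paper's: observe that minuscule $\bla$ puts $\Lco$ in $\cat^{\la,\la^*}_{\mathsf{per}}$, then apply Proposition \ref{serre}'s formula $\mathfrak{S}=\mathfrak{R}\mathfrak{S}'$ together with the computation $\mathfrak{R}\Lco\cong\Lco[-4\rcl](-4\llrr-2\lllr)$ that follows from Proposition \ref{Lco-bra}. Your handling of that step, including the identities $\langle\la^*,\la^*\rangle=\lllr$, $\langle\la^*,\rho\rangle=\llrr$, $\rho^\vee(\la^*)=\rcl$, is correct.

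The gap is in the computation of $\mathfrak{S}'\Lco$. You write that $\Lco$ ``sits in the weight-zero piece indexed by $\al=\la+\la^*$,'' i.e.\ you take the subscript $\al$ in $\cat^\bla_\al$ to be the root sum. But throughout the paper this subscript is the weight (cf.\ $\cata^\la_{w_0\la}$, $\cata^\la_{s_j\la}$, $\cata^\la_\la$), and indeed the formula $I^0_\Bi\cong P^0_\Bi(\lllr-\langle\al,\al\rangle)$ in the proof of Proposition \ref{serre} only gives the required zero shift at the highest-weight idempotent $\Bi=\emptyset$ (where $P^0_\emptyset=I^0_\emptyset=\K$) under the weight convention. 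With your $\al=\la+\la^*$, the shift of $\mathfrak{S}'$ is $-2\langle\la,\la^*\rangle$, and the total is
\[
-4\llrr-2\lllr-2\langle\la,\la^*\rangle,
\]
which is \emph{not} $-4\llrr$; there is no cancellation of the cross-term, since $\lllr+\langle\la,\la^*\rangle\neq 0$ in general (e.g.\ for $\mathfrak{sl}_2$, $\la=\omega_1$ it equals $1$; for $\mathfrak{sl}_3$, $\la=\omega_1$ it equals $1$). So the assertion that the bookkeeping ``produce[s] exactly the shift $(-4\llrr)[-4\rcl]$ with no residual cross-terms'' is false as you have set it up. With the correct reading, $\Lco$ has weight $0$, so $\al=0$ and $\mathfrak{S}'\Lco=\Lco(2\lllr)$; then $-4\llrr-2\lllr+2\lllr=-4\llrr$ and the statement follows immediately. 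The rest of your argument (biadjointness via conjugating adjoints by Serre functors, and the Frobenius trace from Serre duality) is essentially the same formal deduction the paper leaves implicit and is fine once the first step is repaired.
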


One should consider this as an analogue of Poincar\'e duality, and thus is a piece of evidence for $A_\la$'s relationship to cohomology rings. 
\begin{proof}
As we noted in the proof of \ref{fin-dim}, $\alg^\bla$ has finite global dimension if the weights $\bla$ are minuscule. The result then follows immediately from Proposition \ref{serre}.
\end{proof}

It would be enough to show that this algebra is commutative to establish the functoriality for flat tangles; we
simply use the usual translation between 1+1 dimensional TQFTs and
commutative Frobenius algebras (for more details, see the book by Kock
\cite{Kock}).  At the moment, not even this very weak form of functoriality is known.

\begin{ques}
  Is there another interpretation of the algebra $A_\la$?  Is it the
  cohomology of a space?
\end{ques}
One natural guess, based on the work of Mirkovi\'c-Vilonen \cite{MV}
and the symplectic duality conjecture of the author and collaborators
\cite{BLPWgco}, is that $A_\la$ is the cohomology
of the corresponding Schubert variety $\overline{\mathrm{Gr}_\la}$ in the
Langlands dual affine Grassmannian.

Another candidate algebra is the multiplication induced on $V_\la$ by
the quantized ``shift of function algebra'' $\EuScript{A}_f$ for a
regular nilpotent element $f$ studied by Feigin, Frenkel, and Rybnikov
\cite{FFR}.

We can use the biadjunction to give a rather simple
prescription for functoriality: for each embedded cobordism in
$I\times S^3$ between knots in $S^3$, we can isotope so that the
height function is a Morse function, and thus decompose the cobordism
into handles.  Furthermore, we can choose this so that the projection
goes through these handle attachments at times separate from the times
it goes through Reidemeister moves.  We construct the functoriality
map by assigning
\begin{itemize}
\item to each Reidemeister move, we associate a fixed isomorphism of
  the associated functors.
\item to the birth of a circle (the attachment of a 2-handle), we associate
  the unit of the adjunction $(\mathbb{K},\mathbb{T})$ or $(\mathbb{C},\mathbb{E})$, depending on the orientation.
\item to the death of a circle (the attachment of a 0-handle), we
  associate the counits of the opposite adjunctions  $(\mathbb{T},\mathbb{K})$ or $(\mathbb{E},\mathbb{C})$ (i.e., the Frobenius trace).
\item to a saddle cobordism (the attachment of a 1-handle), we
  associate (depending on orientation) the unit of the second adjunction above, or the counit of the first.
\end{itemize}

\begin{conj}
  This assignment of a map to a cobordism is independent of the choice
  of Morse function, i.e.\ this makes the knot homology theory
  $\EuScript{K}(-)$ functorial.
\end{conj}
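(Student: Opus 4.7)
The plan is to reduce the conjecture to checking a finite collection of local ``movie moves'' in the sense of Carter--Saito. Any two generic Morse functions on an embedded cobordism $W\subset I\times S^3$ can be connected by a one-parameter family of smooth functions, and by Cerf theory the generic bifurcations along such a family form a finite list of types: handle slides, cancellations of adjacent critical points of index differing by one, reorderings of critical points at the same height, and reorderings between handle attachments and Reidemeister moves in the link projection. For each such bifurcation one must verify that the two corresponding compositions of natural transformations among the functors $\mathbb{K},\mathbb{T},\mathbb{C},\mathbb{E},\mathbb{B}_{\si_i}$ and the Frobenius unit/counit of Theorem \ref{Frob-conj} agree.

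First I would fix, once and for all, the specific Reidemeister isomorphisms to be used: the RI isomorphism coming from Proposition \ref{Lco-bra}, the RII and RIII isomorphisms coming from the braid groupoid action of Theorem \ref{braid-act}, the pitchfork isomorphism from Lemma \ref{pitch}, and the S-move isomorphism from Proposition \ref{S-move}. These choices matter: different normalizations change the functoriality map by a scalar, and some choices will not satisfy the movie moves. The biadjunction of Theorem \ref{Frob-conj}, which is unconditional in the minuscule case because $\cat^\bla$ then has finite global dimension, is essential here since it provides the cup/cap units and counits in both directions required for saddle maps.

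Next I would work through the Carter--Saito movie moves one by one. Most admit the following three-step reduction: express both sides as compositions of bimodules applied to certain $\bra_\si$-tensorands; use the basis of Proposition \ref{minimal-basis} together with the standardization $\mathbb{S}^\bla$ to reduce to an equality on $P_\emptyset$; and finally verify the equality of the resulting concrete diagrams. The ``far-commutativity'' moves, involving spatially disjoint operations, follow immediately from Proposition \ref{bra-commute} together with the naturality of the adjunction units, while the movie moves involving only isotopy of the link projection have already been addressed by Theorem \ref{ind}.

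The hard part will be the ``mixed'' movie moves in which a saddle interacts non-trivially with a Reidemeister move, together with the cancellation moves (a birth followed by a saddle followed by a death yielding the identity, and similar). For these I would exploit the biadjunction of Theorem \ref{Frob-conj} to realize the saddle unit and counit in two complementary ways, then combine this with the explicit resolution of $\Lco$ furnished by Proposition \ref{sta-res} to compute the composites against the chosen Reidemeister isomorphisms. This is where I expect the principal obstacle: the movie-move equations may force sign or $q$-power corrections to the original Reidemeister isomorphisms, and one must verify that a globally consistent set of normalizations can be chosen. The writhe correction of Proposition \ref{schur-indicate} is a reasonable indication of the kind of scalar factors that can arise, so keeping careful track of the $2\rcl$ homological shifts and $2\llrr$ grading shifts throughout will be essential to extract a coherent set of normalizations.
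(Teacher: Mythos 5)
This statement is a \emph{conjecture} in the paper, explicitly left open: the author writes in the introduction that ``at the moment, we have no proof that this definition is independent of Morse function and we anticipate that proving this will be quite difficult,'' and in Section~\ref{sec:invariants} the map is merely \emph{proposed}, with the conjecture stated as such. So there is no paper proof to compare against, and your submission should be read as a strategy sketch rather than a proof.

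As a strategy sketch, the plan is sensible and essentially the standard one (Cerf theory / Carter--Saito movie moves, as in the functoriality proofs for Khovanov homology by Jacobsson, Bar-Natan, and later Clark--Morrison--Walker and Caprau). You correctly identify the available inputs --- the fixed Reidemeister isomorphisms from Propositions~\ref{Lco-bra}, \ref{S-move}, Lemma~\ref{pitch}, and Theorem~\ref{braid-act}; the biadjunction from the minuscule hypothesis; and the reduction-to-$P_\emptyset$ technique used repeatedly in the paper --- and you correctly flag the two hard spots: the mixed movie moves where a handle attachment interacts with a Reidemeister move, and the cancellation moves. But you do not actually verify a single movie move; the paragraph beginning ``The hard part will be\dots'' ends by conceding that one ``must verify that a globally consistent set of normalizations can be chosen,'' which is precisely the content of the conjecture. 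In particular, nothing in your argument rules out the failure mode known from the $\mathfrak{sl}_2$ case, where the naive assignment is functorial only up to sign and fixing this required substantially new structure (disoriented cobordisms, or a modified Frobenius algebra). You would need, at minimum, either a careful case-by-case verification of all the Carter--Saito moves with explicit computations in $\bra_\si$ and $\coe^{\bla^+}_\bla$, or a conceptual argument (e.g.\ realizing the construction inside a canonical $4$-category) that sidesteps the combinatorial explosion. As written, the gap is the entire verification step: the proposal identifies where the difficulty lives but does not resolve it, which is consistent with the paper leaving this as a conjecture.
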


In the case of $\mathfrak{sl}_2$, there is a homology theory which we
believe to coincide with ours, defined by Cooper, Hogancamp and
Krushkal \cite{CoKr, CoHoKr}.  A version of functoriality for this theory has been
given by Hogancamp \cite{Hofunc}, overcoming some of the difficulties
posed by the failure of finite global dimension this case, but still
not giving an answer for every cobordism between knots.

\section{Comparison to other knot homologies}
\label{sec:comparison-functors}

A great number of other knot homologies have appeared on the scene in the last decade, and obviously, we would like to compare them to ours.  While several of these comparisons are out of reach at the moment, in this section we check the one which seems most straightforward based on the similarity of constructions: we describe an isomorphism to the invariants constructed by Mazorchuk-Stroppel and Sussan for the fundamental representations of $\mathfrak{sl}_n$.

To do this, we will use the functor $\Xi:\cata^\bla\to \tcO^\fp$ constructed in \cite[\S 4]{WebCTP} (as before, we will freely use notation from this preceding paper).  Here we use $\bla$ to construct a Young pyramid $\pi$ whose column lengths are the indices of the fundamental weights appearing in the expansion of $\la_j$, and let $\fp$ be a parabolic subalgebra of $\mathfrak{gl}_{N}$ which precisely preserves a flag of type corresponding to the pyramid $\pi$. Given this data, we let $\tcO^\fp$ be a graded lift of a block of $\fp$-parabolic category $\cO$.  

In order to compare knot homologies, we must compare the functors we have described on our categories $\cat^\bla$ and those on $\tcO^\fp$.  For simplicity, in this section we will assume that $\bla$ is a sequence of fundamental weights.  In this paper, we are only concerned about commuting up to isomorphism of functors; thus when we say a diagram of functors ``commutes'' we mean that the functors for any two paths between the same points are isomorphic.

First, let us consider the braiding functors.  Associated to each
permutation of $N$ letters, we have a derived twisting functor
$T_w\colon \Dbe(\tcO)\to \Dbe(\tcO)$ (see \cite{AS} for more details and the definition).  

\begin{prop}\label{trans-braid}
When $\bla=(\om_1,\cdots,\om_1)$, then $\fp=\mathfrak{b}$ and we have a commutative diagram
\begin{equation*}
    \begin{tikzpicture}[yscale=1.1,xscale=1.9,very thick]
        \node (a) at (1,1) {$\Dbe(\tcO_n)$};
        \node (b) at (-1,1) {$\Dbe(\tcO_n)$};
        \node (c) at (1,-1) {$\cat^{\bla}$};
        \node (d) at (-1,-1) {$\cat^\bla$};
        \draw[->] (b) -- (a) node[above,midway]{$T_{v}$};
        \draw[->] (d) -- (c) node[below,midway]{$\mathbb{B}_v$};
        \draw[->] (c) --(a) node[right,midway]{$\Xi$};
         \draw[->] (d) --(b) node[left,midway]{$\Xi$};
    \end{tikzpicture}
\end{equation*}
\end{prop}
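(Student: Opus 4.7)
The plan is to reduce the commutativity to the case of a single simple reflection and then check the isomorphism on one distinguished generator, propagating it via the categorical action.

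First, since twisting functors satisfy the braid relations (Arkhipov--Soergel) and the functors $\mathbb{B}_v$ do as well by Theorem~\ref{braid-act}, it suffices to produce a natural isomorphism $\Xi \circ \mathbb{B}_{s_i} \xrightarrow{\sim} T_{s_i} \circ \Xi$ for each simple reflection $s_i$ of the symmetric group acting on $\bla$. Both $T_{s_i}$ and $\mathbb{B}_{s_i}$ commute with the categorical $\mathfrak{sl}_n$-action on their respective categories (Proposition~\ref{bra-commute} on our side, and the standard bimodule description of $T_{s_i}$ on the $\cO$ side), and $\Xi$ intertwines these two categorifications by the main results of \cite[\S 5]{CTP}. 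Consequently the natural transformation in question is determined by its restriction to any object that generates the derived category under the $\tU$-action.

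I would take that generator to be $\mathbb{S}^{\bla}(P_\emptyset)$, the dominant highest-weight projective. Under $\Xi$ this object corresponds to the graded lift of the dominant parabolic Verma in $\tcO_n$. On the braiding side, Proposition~\ref{sta-braid}, applied in the special case where no black strand is present and both red strands carry $\om_1$, yields
\[
\mathbb{B}_{s_i}\bigl(\mathbb{S}^{\bla}(P_\emptyset)\bigr) \cong \mathbb{S}^{\bla}(P_\emptyset)\bigl(\langle\om_1,\om_1\rangle\bigr),
\]
since $s_i\cdot\bla=\bla$ and $\bal(s_i)=0$. On the $\cO$ side, the derived twisting functor $T_{s_i}$ sends the dominant Verma to itself up to an explicit internal shift; a direct check in the rank-one situation pins this shift down and matches it with $\langle\om_1,\om_1\rangle$ under our normalization. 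This matches the two sides on the generator, and the isomorphism then extends uniquely to all of $\cat^\bla$ by the commutation with $\tU$ established above.

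The main obstacle I expect is bookkeeping for the grading conventions: our red crossing carries the shift $-\langle\la_k,\la_{k+1}\rangle$ dictated by the categorified $R$-matrix, whereas the graded lift of $T_{s_i}$ used by Mazorchuk--Stroppel--Sussan carries shifts determined by length and the Soergel normalization. Once the constants are aligned in the $\mathfrak{sl}_2$ base case, the general statement follows from naturality together with the fact that both functors are realized by tensoring with a bimodule determined by its behavior on dominant Vermas and its $\tU$-equivariance; any remaining ambiguity (a scalar or grading shift on the natural transformation) is fixed by checking a single diagram in the block of minimal rank.
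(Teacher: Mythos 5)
Your overall strategy (reduce to simple reflections by the braid relations, invoke commutation with the categorical action, then check on a generating object) is the right kind of argument, and the commutation input via Proposition~\ref{bra-commute} and the results of~\cite[\S 5]{CTP} is correctly identified. However, the choice of generating object is where the argument breaks.

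The object $\mathbb{S}^{\bla}(P_\emptyset)$ does \emph{not} generate $\cat^\bla$ under the $\tU$-action when $\ell>1$. At the level of $K_0$, the thick subcategory it generates together with $\fF_i,\fE_i$ has Grothendieck group equal to the $U_q$-submodule of $V_\bla=V_{\om_1}^{\otimes \ell}$ generated by $v_h$, which is just the Cartan component $V_{\ell\om_1}\subsetneq V_\bla$; the standards $S^\kappa_\Bi$ with nontrivial $\kappa$ are not reached. Worse, the top weight space $\cat^\bla_\la$ (where $\mathbb{S}^\bla(P_\emptyset)$ lives) is equivalent to graded vector spaces, so any two equivalences agree there up to shift automatically -- the check on this single object carries essentially no information. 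So the natural isomorphism is not pinned down by your argument, and the ``$\mathfrak{sl}_2$ rank-one base case'' for the twisting functor shift doesn't propagate as claimed.

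The paper instead verifies the isomorphism on \emph{all} parabolic Verma modules (equivalently all standard modules $S^\kappa_\Bi$, i.e.\ the whole standardization image), which together with the categorical action \emph{does} suffice. Both $T_v$ and $\Xi\circ\mathbb{B}_v\circ\Xi$ are shown to send each parabolic Verma to the mutation corresponding to reordering by $v$ (for $T_v$ the reordering is of tableaux; on the diagrammatic side it is the reordering of the root function $\bal$ between red strands), and these reorderings coincide. This also avoids the reduction to a single $s_i$, handling all $v$ uniformly. To repair your argument you would need to run your Proposition~\ref{sta-braid} computation across the full family of standards $\mathbb{S}^\bla(P_{\dots;\Bi_j;\emptyset;\dots})$ (the ones with a highest weight factor in the $(j+1)$st slot), not just the dominant one, and then compare with the known action of $T_{s_i}$ on arbitrary Vermas rather than only the dominant one.
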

\begin{proof}
  We note that functors $T_{v}$ commutes with translation functors by \cite[Lemma 2.1(5)]{AS}.  The same holds for  $\Xi\circ \mathbb{B}_v\circ
  \Xi$ by \cite[Proposition 4.9]{WebCTP} and Proposition \ref{bra-commute}.
  
So as usual, we need only compute their behavior on parabolic Verma
modules on the level of objects in order to check isomorphisms of
functors. Furthermore,  by Proposition
  \ref{pro:mutate}, $\mathbb{B}_v$ 
  sends the exceptional collection of standard objects to its mutation
  by using $v$ to reorder the root function $\bal$ given
  by the sum of the roots that appear between the red lines.  Furthermore $T_v$ sends the exceptional
  collection of parabolic Verma modules to its mutation by the
  change of order associated to
  the action of $v$ on tableaux.  By \cite[4.10]{WebCTP}, these
  changes of partial order are intertwined by the correspondence
  between standard modules and parabolic Verma modules given by
  $\Xi$. Thus the
  mutations also match under $\Xi$,
  so the diagram commutes.  
\end{proof}

Finally, we turn to describing the functors associated to cups and
caps.  If $\pi$ has a column of height $n$ in the $k$th position,
then any block of category $\tcO_n^\fp$ is equivalent to the block of
category $\tcO^{\fp'}_n$ associated to $\pi'$, the diagram $\pi$ with
that column of height $n$ removed. The content of the
tableaux in the new block is that of the original block with the multiplicity of each
number in $[1,n]$ reduced by 1.  The effect of this functor on the simples,
projectives and Vermas is simply removing that column of height $n$ (which by column
strictness must be the numbers $[1,n]$ in order).
The functor that realizes this equivalence $\zeta:\tcO^\fp_n\to
\tcO^{\fp'}_n$ is the {\bf Enright-Shelton equivalence}, which is
developed in the form most useful for us in \cite[\S 3.2]{Sussan2007}.

Having already developed the equivalence $\Xi$, this functor is
actually quite easy to describe.  Let $P^\kappa_d$ denote the
module attached to $\kappa$ and $d$ for $\fp'$ as above, and let
$Q^{\kappa_+}_d$ be the module attached in the same way to $\fp$,
where \[\kappa_+(j)=
\begin{cases}
  \kappa(j) & j\leq k\\
  \kappa(j-1) &j>k.
\end{cases}\]
The result \cite[4.7]{WebCTP} gives equivalences of $\cata^\bla$ with the category generated
by $\operatorname{pr}_n(P^\kappa_d)$ and with that generated by
$\operatorname{pr}_n(Q^\kappa_d)$; under these two equivalences,
$\operatorname{pr}_n(P^\kappa_d)$ and $\operatorname{pr}_n(Q^\kappa_d)$
are sent to the same projective.  The functor $\zeta$ is the
composition of the second equivalence with the inverse of the first.

We will also use also have {\bf Zuckerman functors}, which are the derived functors of
sending a module in $\tcO$ to its largest quotient  which is
locally finite for $\fp$.  These are left adjoint to the
forgetful functor $D^b(\tcO^\fp)\to D^b( \tcO)$.

Begin with a pyramid $\pi$, and assume $\pi'$ is obtained from $\pi$
by replacing a pair of consecutive columns whose lengths add up to $n$
(a pair of consecutive dual representations in the sequence $\bla$),
with one of length $n$, and $\pi''$ is obtained by deleting them
altogether.  
\begin{defn}
The {\bf ES-cup functor} $K\colon\tcO^{\pi''}\to \tcO^{\pi}$ 
is the composition of the inverse of the Enright-Shelton equivalence for
$\pi''$ and $\pi'$ with the forgetful functor from $\tcO^{\pi'}$ to
$\tcO^\pi$ (which corresponds to an inclusion of parabolic subgroups).

The {\bf ES-cap functor} $T\colon\tcO^{\pi}\to \tcO^{\pi''}$  is  the composition of the
Zuckerman functor from $\tcO^\pi$ to $\tcO^{\pi'}$  with the
Enright-Shelton functor $\zeta\colon\tcO^{\pi'}\to\tcO^{\pi''}$.
\end{defn}

\begin{prop}\label{trans-cup-cap}
Both squares in the diagram below commute.
\begin{equation*}
    \begin{tikzpicture}[yscale=1.2,xscale=2.1,very thick]
        \node (a) at (1,1) {$\Dbe(\tcO^{\fp}_n)$};
        \node (b) at (-1,1) {$\Dbe(\tcO^{\fp'}_n)$};
        \node (c) at (1,-1) {$\cat^{\bla^+}$};
        \node (d) at (-1,-1) {$\cat^\bla$};
        \draw[->] (b) to[out=20,in=160] node[above,midway]{$K$} (a);
        \draw[->] (a) to[out=-160,in=-20] node[below,midway]{$T$} (b);
         \draw[->] (d) to[out=20,in=160]
         node[above,midway]{$\mathbb{K},\mathbb{C}$}  (c);
        \draw[->] (c) to[out=-160,in=-20]
        node[below,midway]{$\mathbb{T},\mathbb{E}$} (d);
        \draw[->] (c) --node[right,midway]{$\Xi$} (a) ;
         \draw[->] (d) -- node[left,midway]{$\Xi$} (b);
    \end{tikzpicture}
\end{equation*}
\end{prop}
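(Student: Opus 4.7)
The plan is to establish commutativity of the cup functors (upper arrows) first, then deduce the cap functors (lower arrows) by passing to adjoints. The central observation is that both composites $\Xi \circ \mathbb{K}^{\bla^+}_{\bla}$ and $K \circ \Xi$ commute with the Hecke action of $\tU$. On the $\cat$-side this follows from the explicit construction of $\coe^{\bla^+}_{\bla}$, which is $\tU$-equivariant on the strands arising from $\bla$, together with the argument of Proposition \ref{bra-commute}; on the $\tcO$-side, both Zuckerman and forgetful functors commute with translation functors, as does the Enright-Shelton equivalence by construction. Since $\Xi$ itself intertwines the two $\tU$-actions by \cite[Proposition 5.9]{CTP}, any comparison natural transformation between these functors is determined by its value on a single generator of $\cat^\bla$ as a $\tU$-module category.

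Using the pitchfork move (Lemma \ref{pitch}) and its evident analog for the ES functors, combined with Proposition \ref{trans-braid}, one may slide the inserted $(\mu,\mu^*)$ pair to one end of the sequence and reduce to the base case $\bla = \emptyset$, $\bla^+ = (\mu,\mu^*)$. Here one must identify $\Xi(\Lco) \in \tcO^{\fp}$ with the image under $K$ of the generator of $\tcO^{\fp''}$. By Proposition \ref{proj-irr}, $\Lco$ is the unique simple of its weight in $\cata^{\mu,\mu^*}$, while the Enright-Shelton equivalence, by its very definition, collapses the full column of height $n$ formed by a consecutive dual pair to the empty pyramid; both sides thus correspond to the trivial block and the identification is essentially forced. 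The commutativity of the lower square then follows from adjunction: $\mathbb{T}$ is left adjoint to $\mathbb{K}$ by the standard tensor-hom adjunction, and $T$ is left adjoint to $K$ via the Zuckerman-forgetful adjunction post-composed with the biadjointness of the ES equivalence (which is after all an equivalence). The variants $\mathbb{C}, \mathbb{E}$ are handled identically using the other orientation, which corresponds to the reversed pair $(\mu^*,\mu)$ and the isomorphic ES data.

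The main obstacle will be pinning down the grading shifts in the base case. While the underlying ungraded isomorphism $\Xi(\Lco) \cong K(\K)$ follows from general categorical arguments, extracting the precise shifts $(2\llrr)[-2\rcl]$ demands a careful Koszul-theoretic computation inside $\tcO$: matching the homological shift coming from the derived Zuckerman functor against the one built into $\mathbb{K}$, and matching the internal degree shift coming from the Koszul grading of the ES equivalence against $2\llrr$. Both match precisely because they arise from the same combinatorial datum --- the length of the longest element of the Weyl group of $\fgl_n$ attached to the collapsed column --- but verifying this coincidence and its compatibility with the translation-functor commutations is where the technical weight of the argument lies.
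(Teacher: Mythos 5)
Your high-level strategy matches the paper's: reduce to the cup functor $K$ by adjunction, check the base case $\bla = \emptyset$, $\bla^+ = (\mu,\mu^*)$ on a generating object, and identify $\Xi(\Lco)$ with the simple that $K$ produces from $\K$. The paper's proof is exactly this, with the base-case identification being the tableau computation and the reduction handled by the $\tU$-compatibility of $\Xi$ established in \cite[Propositions 5.9 \& 5.10]{CTP}.

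Where you diverge is the reduction step. You propose sliding the inserted pair $(\mu,\mu^*)$ to one end of the sequence via the pitchfork move (Lemma~\ref{pitch}), its ``evident analog for the ES functors,'' and Proposition~\ref{trans-braid}. This is both unnecessary and potentially circular: the ES analog of the pitchfork move is not established independently in the paper, and the most natural way to derive it is from the very commutativity you are trying to prove (combined with Proposition~\ref{trans-braid} and Lemma~\ref{pitch}). The paper instead reduces to the base case directly, without any braiding, by observing that $\Xi$, $K$, and $\mathbb{K}$ all commute with the Hecke/translation-functor action, and that this action (together with insertion of red strands) generates $\cat^\bla$ from $P_\emptyset$. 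Your argument should simply drop the pitchfork step and rely on the $\tU$-equivariance alone, matching Proposition~\ref{qt-cat}'s own reduction to the case ``$\mu$ added at the end.''

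Two smaller points. First, your citation of Proposition~\ref{proj-irr} for the uniqueness of $\Lco$ is slightly off; the uniqueness used in the paper comes from the one-dimensionality of the $-\la^*$-weight space of $V_\la$ (as in the proof of Proposition~\ref{coev}), with Proposition~\ref{proj-irr} providing the irreducibility of the constituent $P^0_{\Bi_\la}$. Second, your worry about matching the grading shift $(2\llrr)[-2\rcl]$ against the Koszul grading of the ES equivalence and the homological shift of the derived Zuckerman functor is a legitimate one that the paper's terse ``this is clear'' does not explicitly address; if you want to pursue a fully self-contained write-up, this verification is indeed where the real work lies, and your heuristic that both shifts are controlled by the length of the longest element attached to the collapsed column is the right thing to make precise.
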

\begin{proof}
We need only check this for $K$, since in both cases, the functors
above are in adjoint pairs.  

Using the compatibility results for functors proved in
\cite[4.9 \& 4.10]{WebCTP}, we can reduce to the case
where the cup is added at the far right. Let  $\fl$ is be the standard Levi of type $(N-n,n)$. In this case, the
ES-equivalence is just given by $\ind_{\fl}^{\fgl_N}(-\otimes \C^n)$, since this sends  $\operatorname{pr}_n(P^\kappa_d)$ to
$\operatorname{pr}_n(Q^\kappa_d)$.  On the other
hand, we already know by \cite[4.10]{WebCTP} that this is intertwined with
$\mathbb{S}^{\bla,(\om_1,\om_{n-1})}(-,L_{\om_1})$, which matches with
$\mathbb{K}$ as shown in the proof of Proposition \ref{qt-cat}.
\end{proof}

These propositions show that our work matches with that of Sussan
\cite{Sussan2007} and Mazorchuk-Stroppel \cite{MS09}, though the
latter paper is ``Koszul dual'' to our approach above.   Recall that
each block of $\tcO_n$ has a Koszul dual, which is also a block of
parabolic category $\cO$ for $\mathfrak{gl}_N$ (see \cite{Back99}).
In particular, we have a Koszul duality
equivalence $$\Ko:\Dbe(\tcO_n^\fp)\to
D^{\downarrow}({^{n}_{\fp}\tcO})$$ where ${^n_\fp\tcO}$ is the direct
sum over all $n$ part compositions $\mu$ (where we allow parts of size
0) of a block of $\fp_\mu$-parabolic category $\tcO$ for
$\mathfrak{gl}_N$ with a particular central character depending on 
$\fp$.

Now, let $T$ be an oriented tangle labeled with $\bla$ at the bottom and $\bla'$ at top, with all appearing labels being fundamental.  Then, as before, associated to $\bla$ and $\bla$ we have parabolics $\fp$ and $\fp'$.
\begin{prop}
Assume $\bla$ only uses the fundamental weights $\om_1$ and $\om_{n-1}$. Then we have a commutative diagram
\begin{equation*}
    \begin{tikzpicture}[yscale=.9,xscale=2.4,very thick]
        \node (e) at (1,3) {$D^{\downarrow}({^n_{\fp'} \tcO})$};
        \node (f) at (-1,3) {$D^{\downarrow}({^n_{\fp} \tcO})$};
        \node (a) at (1,1) {$\Dbe(\tcO_n^{\fp'} )$};
        \node (b) at (-1,1) {$\Dbe(\tcO_n^{\fp})$};
        \node (c) at (1,-1) {$\cat^{\bla'}$};
        \node (d) at (-1,-1) {$\cat^{\bla}$};
        \draw[->] (f) -- (e) node[above,midway]{$\mathcal{F}(T)$};
	        \draw[->] (b) -- (a) node[above,midway]{$\mathbb{F}(T)$};
        \draw[->] (d) -- (c) node[above,midway]{$\Phi(T)$};
        \draw[->] (c) --(a) node[right,midway]{$\Xi$};
         \draw[->] (d) --(b) node[left,midway]{$\Xi$};
        \draw[->] (a) --(e) node[right,midway]{$\Ko$};
         \draw[->] (b) --(f) node[left,midway]{$\Ko$};
    \end{tikzpicture}
\end{equation*}
where $\mathbb{F}(T)$ is the functor for a tangle defined by Sussan in \cite{Sussan2007} and $\mathcal{F}(T)$ is the functor defined by Mazorchuk and Stroppel in \cite{MS09}.

  Our invariant $\EuScript{K}$ thus coincides with the knot invariants
  of both the above papers when  all components are labeled
  with the defining representation.  In particular,  it coincides with Khovanov homology when
  $\fg=\mathfrak{sl}_2$ and Khovanov-Rozansky homology when
  $\fg=\mathfrak{sl}_3$.
\end{prop}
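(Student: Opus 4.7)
The plan is to verify the two commuting squares separately, paste them, and then use the existing identification of the Mazorchuk--Stroppel invariants with Khovanov and Khovanov--Rozansky homology to conclude the final sentence.

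Both $\Phi(T)$ and $\mathbb{F}(T)$ are defined by fixing a tangle projection of $T$ and composing functors attached to the elementary pieces: for $\Phi(T)$ these are the braiding functors $\mathbb{B}_{\si_i^{\pm 1}}$ and the oriented (co)evaluation and quantum (co)trace functors $\mathbb{K}, \mathbb{T}, \mathbb{C}, \mathbb{E}$, and for $\mathbb{F}(T)$ they are the derived twisting functors $T_w^{\pm 1}$ and the oriented ES-cup and ES-cap functors. For the lower square it therefore suffices to paste, along the chosen projection, the elementary commutative squares supplied by Proposition \ref{trans-braid} (for positive crossings) and Proposition \ref{trans-cup-cap} (for each of the four oriented cups and caps). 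Negative crossings are handled by taking adjoints, using that both $T_w$ and $\mathbb{B}_{\si_i}$ are equivalences by Theorem \ref{braid-act} and \cite[Lemma 2.1]{AS}, so that the adjoint diagram commutes as well.

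For the upper square one must show $\mathcal{F}(T) \circ \Ko \cong \Ko \circ \mathbb{F}(T)$, which again reduces to an elementary-piece check. That the Koszul dual of a derived twisting functor is a derived shuffling functor is a result of Ryom-Hansen, used in exactly this form in \cite{MSlink}. The cup/cap functors of Mazorchuk--Stroppel are constructed in \cite{MSlink} precisely as Koszul duals of the ES-cup/cap functors used by Sussan, so in the restricted setting of interest (parabolic blocks corresponding to compositions whose parts lie in $\{1, n-1\}$) the upper square for elementary pieces is essentially built in; pasting along the tangle projection yields the full upper square.

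With both squares established, the final identifications follow: Mazorchuk and Stroppel prove in \cite{MSlink} that $\mathcal{F}(T)(\K)$ reproduces Khovanov homology for $\fg=\mathfrak{sl}_2$ and Khovanov--Rozansky homology for $\fg=\mathfrak{sl}_3$ when all strands carry the defining representation, and transporting this through the two squares via $\Xi$ and $\Ko$ identifies $\EuScript{K}(L)$ with these homologies up to the predicted grading shift. The main obstacle is the upper square: all of the necessary Koszul-duality statements are in the literature, but the bookkeeping of grading shifts and orientation conventions across \cite{Sussan2007}, \cite{MSlink}, and our own normalizations must be tracked carefully in order to produce a single coherent natural isomorphism rather than merely piecewise agreement on elementary tangles.
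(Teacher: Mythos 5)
Your proposal is correct and follows essentially the same route as the paper: the paper's (one-sentence) proof invokes Propositions \ref{trans-braid} and \ref{trans-cup-cap} for the bottom square, the definitions of $\mathbb{F}(T)$ and $\mathcal{F}(T)$ in the cited papers for the top square, and a Koszul-duality compatibility result from \cite{CTP} for the transport across $\Ko$. You have simply unpacked what "paste the elementary commuting squares along the tangle projection, handle negative crossings by adjoints, and use the literature's Koszul-duality identifications" means concretely, which is exactly what the terse proof in the paper relies on.
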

\begin{proof}
We need only check that we define the same functors as Sussan and
Mazorchuk-Stroppel on a single crossing of strands labeled $\om_1$ and
on cups and caps.  
In  \cite[\S 6]{Sussan2007}, the action of crossings is given by twisting
functors and in \cite[\S 6]{MS09} by shuffling functors; thus, Proposition
\ref{trans-braid} identifies our crossing with Sussan's and the
duality of twisting and shuffling functors proven in \cite{RH} shows
that it matches that of Mazorchuk and Stroppel.

Since Sussan's cup and cap functors defined in \cite[\S 3.2]{Sussan2007} are defined by applying a
Zuckerman functor after the ES-equivalence $\cO^\fp_n\cong
\cO^{\fp'}_n$ on objects, Proposition \ref{trans-cup-cap} shows that
our functors agree with his; similarly, Mazorchuk and Stroppel's
functor is an ES-equivalence Koszul dual to ours, followed by a
translation functor, which matches our Zuckerman functor by  \cite{RH}.
\end{proof}
We believe strongly that this homology agrees with that of
Khovanov-Rozansky when one uses the defining representation for all
$n$ (this is conjectured in \cite{MS09}), but actually proving this
requires an improvement in the state of understanding of the
relationship between the foam model of Mackaay, \Stosic and Vaz
\cite{MSVsln} and the model we have presented.  It would also be
desirable to compare our results to those of Cautis-Kamnitzer for
minuscule representations, and Khovanov-Rozansky for the Kauffman
polynomial, but this will require some new ideas, beyond the scope of
this paper.

 \bibliography{../gen}
\bibliographystyle{amsalpha}
\end{document}